\def\namedlabel#1#2{\begingroup
	#2%
	\def\@currentlabel{#2}%
	\phantomsection\label{#1}\endgroup
}
\DeclareMathOperator{\dv}{div}
\newcommand{\RR}{\mathbb{R}}
\newcommand{\mA}{\mathcal{A}}
\newcommand{\na}{\nabla}
\newcommand{\pa}{\partial}
\newcommand{\La}{\Lambda}
\newcommand{\la}{\lambda}
\newcommand{\ep}{\epsilon}
\newcommand{\data}{\mathit{data}}
\newenvironment{abstracts}{%
  \ifx\maketitle\relax
    \ClassWarning{\@classname}{Abstract should precede
      \protect\maketitle\space in AMS document classes; reported}%
  \fi
  \global\setbox\abstractbox=\vtop \bgroup
    \normalfont\Small
    \list{}{\labelwidth\z@
      \leftmargin3pc \rightmargin\leftmargin
      \listparindent\normalparindent \itemindent\z@
      \parsep\z@ \@plus\p@
      
      \itemsep\medskipamount
    }%
}{%
  \endlist\egroup
  \ifx\@setabstract\relax \@setabstracta \fi
}
\newcommand{\abstractin}[1]{%
  \otherlanguage{#1}%
  \item[\hskip\labelsep\scshape\abstractname.]%
}
\theoremstyle{plain}
\newtheorem{theorem}{Theorem}[section]
\newtheorem{lemma}[theorem]{Lemma}
\newtheorem{corollary}[theorem]{Corollary}
\newtheorem{definition}[theorem]{Definition}
\newtheorem{proposition}[theorem]{Proposition}
\newtheorem{remark}[theorem]{Remark}
\def\Xint#1{\mathchoice
	{\XXint\displaystyle\textstyle{#1}}%
	{\XXint\textstyle\scriptstyle{#1}}%
	{\XXint\scriptstyle\scriptscriptstyle{#1}}%
	{\XXint\scriptstyle\scriptscriptstyle{#1}}%
	\!\int}
\def\XXint#1#2#3{{\setbox0=\hbox{$#1{#2#3}{\int}$}
		\vcenter{\hbox{$#2#3$}}\kern-.5\wd0}}
\def\Yint#1{\mathchoice
	{\YYint\displaystyle\textstyle{#1}}%
	{\YYint\textstyle\scriptstyle{#1}}%
	{\YYint\scriptstyle\scriptscriptstyle{#1}}%
	{\YYint\scriptscriptstyle\scriptscriptstyle{#1}}%
	\!\iint}
\def\YYint#1#2#3{{\setbox0=\hbox{$#1{#2#3}{\iint}$}
		\vcenter{\hbox{$#2#3$}}\kern-.51\wd0}}
\def\longdash{{-}\mkern-3.5mu{-}} 
\def\fiint{\Yint\longdash}
\def\Xint#1{\mathchoice
	{\XXint\displaystyle\textstyle{#1}}%
	{\XXint\textstyle\scriptstyle{#1}}%
	{\XXint\scriptstyle\scriptscriptstyle{#1}}%
	{\XXint\scriptscriptstyle\scriptscriptstyle{#1}}%
	\!\int}
\def\XXint#1#2#3{{\setbox0=\hbox{$#1{#2#3}{\int}$ }
		\vcenter{\hbox{$#2#3$ }}\kern-.6\wd0}}
\def\dashint{\Xint-}
\let\orgdescriptionlabel\descriptionlabel
\renewcommand*{\descriptionlabel}[1]{%
	\let\orglabel\label
	\let\label\@gobble
	\phantomsection
	\edef\@currentlabel{#1}%
	\let\label\orglabel
	\orgdescriptionlabel{#1}%
}
\numberwithin{equation}{section}
\def\Xint#1{\mathchoice
    {\XXint\displaystyle\textstyle{#1}}%
    {\XXint\textstyle\scriptstyle{#1}}%
    {\XXint\scriptstyle\scriptscriptstyle{#1}}%
    {\XXint\scriptscriptstyle\scriptscriptstyle{#1}}%
    \!\int}
\def\XXint#1#2#3{\setbox0=\hbox{$#1{#2#3}{\int}$}
    \vcenter{\hbox{$#2#3$}}\kern-0.5\wd0}
\def\fint{\Xint-}
\def\dashint{\Xint{\raise4pt\hbox to7pt{\hrulefill}}}
\def\XXiint#1#2#3{\setbox0=\hbox{$#1{#2#3}{\iint}$}
    \vcenter{\hbox{$#2#3$}}\kern-0.5\wd0}
\begin{document}
	
\title[Calder\'on-Zygmund type estimate for the degenerate system]{Calder\'on-Zygmund type estimate for the parabolic double-phase system}

\everymath{\displaystyle}

\makeatletter
\@namedef{subjclassname@2020}{\textup{2020} Mathematics Subject Classification}
\makeatother

\author{Wontae Kim}
\address[Wontae Kim]{Department of Mathematics, Aalto University, P.O. BOX 11100, 00076 Aalto, Finland}
\email{wontae.kim@aalto.fi}

\begin{abstracts}
\abstractin{english}
    This paper provides a local and global Calder\'on-Zygmund type estimate of a weak solution to the parabolic double-phase system. The proof of local estimate is based on comparison estimates and the scaling invariant property of the parabolic double-phase system in the intrinsic cylinders of the stopping time argument setting. For the proof of the global estimate, we have applied the reflection and approximation techniques.
\end{abstracts}

\keywords{Parabolic double-phase systems, Calder\'on-Zygmund type estimate.}
\subjclass[2020]{25D30, 35K55, 35K65}
\maketitle

\section{Introduction}

This paper considers the gradient estimate of the degenerate parabolic double-phase system
\[u_t-\dv(b(z)(|\na u|^{p-2}\na u+a(z)|\na u|^{q-2}\na u))=-\dv(|F|^{p-2}F+a(z)|F|^{q-2}F)\]
in the parabolic cylinder $C_R$ defined in \eqref{domain}. 
The double-phase operator consists of two parts. The first part is the $p$-Laplace part and the second part is the $q$-Laplace part for $2\le p< q$. 
The coefficient $b(\cdot)$ of the double-phase operator is bounded from below and above by positive constants while the coefficient $a(\cdot)$ of the $q$-Laplace part is a non-negative H\"older continuous.

We aim to prove the Calder\'on-Zygmund type estimate of the implication
\[H(z,|F|)\in L^\sigma(C_R) \Rightarrow H(z,|\na u|)\in L^\sigma(C_R) \quad\text{for all}\quad \sigma\in(1,\infty),\]
where $H(z,s)=s^p+a(z)s^q.$
The higher integrability estimate of Theorem~\ref{higher} was proved in \cite{KKM}. It says that there exists $\varepsilon_0>0$ sufficiently close to $0$ such that the local estimate of the above implication holds for $\sigma\in(1, 1+\varepsilon_0]$. The proof there is based on the stopping time argument, the reverse H\"older inequality and the Vitali covering argument by dividing intrinsic geometry into two cases.
In order to prove the local estimate for any $\sigma\in(1+\varepsilon_0,\infty)$, the comparison estimate with weak solutions of the homogeneous parabolic double-phase systems and the local regularity properties of such weak solutions are necessary rather than the reverse H\"older inequality. Proposition~\ref{prop1} and Proposition~\ref{prop2} contain the comparison estimate in each intrinsic geometry.
Moreover, these estimates with the stopping time argument prove the Vitali covering lemma and Theorem~\ref{main theorem}. 

The main idea of comparison estimates is to construct the Dirichlet boundary problems of the parabolic double-phase system and to obtain a sufficiently small energy estimate of $u$ and a constructed weak solution in each intrinsic geometry. Since the existence of the Dirichlet boundary problem of the parabolic double-phase system is incomplete, we assume infimum $a(\cdot)$ is strictly positive in Theorem~\ref{main theorem}. The scaling invariant property in each intrinsic geometry is used to get the quantitative estimate for the regularity properties of constructed weak solutions. These estimates and Theorem~\ref{higher} are applied to make energy estimates smaller and to prove the Vitali covering argument.

The global estimate in Theorem~\ref{main theorem2} is proved by the local estimate in Theorem~\ref{main theorem} and reflection argument. Since the obtained estimate is stable with respect to the value of the infimum of $a(\cdot)$, the global estimate can be extended when the infimum of $a(\cdot)$ is $0$, see Corollary~\ref{main theorem3}.

The regularity properties of the elliptic double-phase problems were established in \cite{MR2076158,MR3348922,MR3360738,MR3294408}.
The generalized double-phase settings also have been introduced and regularity properties have been studied in \cite{MR3931352,MR4467321}. The results of the parabolic double-phase problem are only recent. For applications, we refer to \cite{MR2435185,MR1810360}. The existence result of the parabolic double-phase system has been proved in \cite{MR3985549,MR3532237}. For the gradient regularity properties, the difference quotient method was applied in \cite{MR3532237}, the higher integrability was proved in \cite{KKM} and the Lipschitz truncation method, existence and uniqueness of the Dirichlet boundary problem have been researched in \cite{KKS}. 

The Calder\'on-Zygmund estimate was proved for the elliptic $p$-Laplace system ($a(\cdot)\equiv0$) in \cite{MR1246185,MR722254,MR1720770,MR1486629} and for the parabolic $p$-Laplace system in \cite{MR2286632,MR3396087,MR3035434,MR4201641}. It was also proved in the general structure of the Orlicz setting in \cite{MR3859447,MR4475232}, the parabolic $p(\cdot)$-Laplace system in \cite{MR3293436} and elliptic double phase system in \cite{MR3447716,MR3985927}.

\section{Notation and main results}

\subsection{Notations} 
For $x_0\in\RR^n$, $t_0\in\RR$ and $\rho>0$, we denote the ball and cube as
\begin{align*}
\begin{split}
    &B_\rho(x_0)=\{x\in \RR^n:|x-x_0|<\rho\},\\
    &D_\rho=\{x=(x_1,...,x_n)\in \RR^n: |x_i|<R \quad\text{for all}\quad i\in n\}
\end{split}
\end{align*}
and denote the time interval as
\[I_\rho(t_0)=(t_0-\rho^2,t_0+\rho^2).\]
We omit the center point if it is the origin. The parabolic cylinders are defined as the product of the ball and the time interval or product of the cube and the time interval
\begin{align}\label{domain}
    Q_\rho(z_0)=B_\rho(x_0)\times I_\rho(t_0),\quad C_R=D_R\times I_R
\end{align}
for $z_0=(x_0,t_0)\in \RR^n\times \RR$. 

For non-negative function $a(\cdot): C_R\longrightarrow\RR^+$, we defined the function $H(z,s): C_R\times \RR^+\longrightarrow\RR^+$ as $H(z,s)=s^p+a(z)s^q.$
Throughout this paper, $a(\cdot)$ will be chosen as a coefficient of $q$-Laplace operator in the referenced double-phase system and thus $H(\cdot,\cdot)$ is also used as a fixed notation. 

For a function $f\in L^1(Q_\rho(z_0))$ and a measurable set $E\subset Q_\rho(z_0)$, the integral average of $f$ over $E$ is denoted as
\[f_{E}=\fiint_{E}f\,dz.\]

\subsection{Main results}
This paper is concerned with the parabolic double-phase system
\begin{align}\label{11}
\begin{cases}
    u_t-\dv\left(b(z)\mA(z,\na u)\right)=-\dv \mA(z,F)&\text{in}\quad  C_R,\\
    u=0&\text{on}\quad \pa_p C_R.
\end{cases}
\end{align}
Here $b(\cdot): C_R\longrightarrow\RR^+$ is a non-negative measurable function satisfying the ellipticity condition, that is, there exist positive constants $\nu,L$ such that
\begin{align}\label{12}
    0<\nu\le b(z)\le L<\infty\quad\text{for a.e.}\quad z\in  C_R,
\end{align}
the map $\mA(z,\xi): C_R\times \RR^{Nn}\longrightarrow \RR^{n}$ with $N\ge1$ is the parabolic double-phase operator defined as
\[\mA(z,\xi)=|\xi|^{p-2}\xi+a(z)|\xi|^{q-2}\xi,\]
where $a(\cdot): C_R\longrightarrow \RR^+$ is a non-negative function. The source term $F: C_R\longrightarrow\RR^{Nn}$ is a given vector field satisfying
\begin{align}\label{14}
	\iint_{ C_R}H(z,|F|)\,dz<\infty.
\end{align}
Throughout the paper, we assume that exponent $2\le p<q<\infty$ and non-negative function $a(\cdot)$ satisfy assumptions
\begin{align}\label{15}
	q\le p+\tfrac{2\alpha}{n+2},\quad 0\le a\in C^{\alpha,\alpha/2}( C_R)\quad\text{for some}\quad\alpha\in(0,1].
\end{align}
The condition $a\in C^{\alpha,\alpha/2}( C_R)$ means that $a\in L^{\infty}( C_R)$ and there exists a constant $[a]_{\alpha,\alpha/2; C_R}=[a]_{\alpha}>0$ such that for $(x,y)\in D_R$ and $(t,s)\in (0,T)$,
\begin{align}\label{16}
    |a(x,t)-a(y,t)|\le [a]_{\alpha,\alpha/2; C_R}|x-y|^\alpha,\quad
    |a(x,t)-a(x,s)|\le [a]_{\alpha,\alpha/2; C_R}|t-s|^\frac{\alpha}{2}.
\end{align}
We further assume that $b$ has the following VMO condition
\begin{align}\label{18}
    \lim_{r\to0^+}\sup_{|I|\le r^2}\sup_{x_0\in D_R}\fint_{ I\cap I_R}\fint_{ B_r(x_0)\cap D_R}|b(x,t)-b_{( B_{r}(x_0)\times I)\cap  C_R}|\,dx\,dt=0,
\end{align}
where supremum is taken over all balls $B_r(x_0)\subset \mathbb{R}^n$ with $x_0\in D_R$ and all intervals $I\subset \mathbb{R}$ with its length $|I|$ is less than or equal to $r^2$.
The weak solution to \eqref{11} is defined in the following sense.
\begin{definition}
    A measurable function $u: C_{R}\longrightarrow\RR^N$ such that
    \begin{align*}
    \begin{split}
        &u\in C(I_R;L^2(D_R,\RR^N))\cap L^1(I_R;W_0^{1,1}(D_R,\RR^N))\quad\text{with}\\
        &\qquad\qquad\iint_{ C_{R}}H(z,|\na u|)\,dz<\infty
    \end{split}
    \end{align*}
    is a weak solution to \eqref{11} if for every $\varphi\in C_0^\infty( C_R,\RR^N)$
    \[ \iint_{ C_R}\left(-u\cdot \varphi_t+b(z)\mA(z,\na u)\cdot \na\varphi\right)\,dz=\iint_{ C_R}\mA(z,F)\cdot \na\varphi\,dz.\]
    Moreover, the initial boundary condition holds in the sense that
    \begin{align}\label{111}
        \lim_{h\to0^+}\fint_{-R^2}^{-R^{2}+h}\int_{D_R}|u(x,t)|^2\,dx\,dt=0.
    \end{align}
\end{definition}
To simplify the dependency of constant, we write
\begin{align*}
\begin{split}
     &\data_{g}=n,N,p,q,\alpha,\nu,L,[a]_{\alpha},R,\|H(z,|F|)\|_{1},\\
     &\data=\data_g,\|u\|_{L^\infty(I_R;L^2(D_R))},\|H(z,|\na u|)\|_{1},
\end{split}
\end{align*}
where we also shorten 
$\|\cdot\|_{\sigma}=\|\cdot\|_{L^\sigma( C_R)}$ for $\sigma\in[1,\infty].$
Before we state the main results in this paper, we state the local higher integrability result.
\begin{theorem}[\cite{KKM}, Higher integrability]\label{higher}
    Suppose $0\le \inf_{z\in C_R}a(z)$ and let $u$ be the weak solution to \eqref{11}. Then there exist $\varepsilon_0=\varepsilon_0(\data)\in (0,1)$ and $c=c(\data,\|a\|_{\infty})$ such that for any $Q_{2\rho}(z_0)\subset  C_{R}$ and $\varepsilon\in(0,\varepsilon_0]$ there holds
    \begin{align*}
\begin{split}
    &\fiint_{Q_{2\rho}(z_0)}(H(z,|\na u|))^{1+\varepsilon}\,dz
        \le c\left(\fiint_{Q_{2\rho}(z_0)}H(z,|\na u|)\,dz\right)^{\frac{q\varepsilon}{2}+1}\\
        &\qquad\qquad+c\left(\fiint_{Q_{2\rho}(z_0)}(H(z,|F|))^{1+\varepsilon}\,dz+1\right)^\frac{q}{2}.
\end{split}
\end{align*}
\end{theorem}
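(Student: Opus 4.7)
The plan is to establish a reverse Hölder inequality for $H(z,|\na u|)$ on parabolic cylinders and then apply a Gehring-type self-improvement lemma. Because the operator mixes a $p$-Laplace and an $a(z)$-weighted $q$-Laplace part, there is no single scaling that makes the system homogeneous, and the reverse Hölder estimate must therefore be proved on a family of intrinsic cylinders adapted to the relative strength of the two phases.

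First, I would derive a Caccioppoli energy estimate on sub-cylinders by testing a Steklov-averaged version of \eqref{11} with $(u-u_{Q_{2\rho}})\zeta^q$ for a suitable parabolic cutoff $\zeta$; this bounds $\fiint_{Q_\rho}H(z,|\na u|)\zeta^q\,dz$ by a quantity involving $\fiint_{Q_{2\rho}}H(z,|u-u_{Q_{2\rho}}|/\rho)\,dz$ and $\fiint_{Q_{2\rho}}H(z,|F|)\,dz$. A parabolic Sobolev--Poincaré inequality then converts the first term into a reverse-Hölder form like $\bigl(\fiint H(z,|\na u|)^\theta\,dz\bigr)^{1/\theta}$ for some $\theta<1$. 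A delicate point is that $H$ is inhomogeneous: the $q$-weight $a(z)$ has to be frozen on the cylinder, and the freezing error is absorbed into the $p$-part precisely because $a\in C^{\alpha,\alpha/2}$ and $q\le p+\frac{2\alpha}{n+2}$, which is why assumption \eqref{15} is imposed.

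To put this into a genuine reverse Hölder form one needs cylinders on which an intrinsic coupling such as $\fiint_{Q_\rho(z_0)}H(z,|\na u|)\,dz\approx\lambda$ holds. I would run a stopping time argument at level $\lambda$: on the super-level set $\{H(z,|\na u|)>\lambda\}$, start from small radii where the intrinsic average is large and stop at the first radius where it drops below $\lambda$, then extract a Vitali sub-covering. The double-phase structure forces a dichotomy on each stopping cylinder between a $p$-intrinsic regime, in which the $a(z)|\na u|^q$ term is dominated by $|\na u|^p$ and the standard scaling of the $p$-Laplacian is available, and a $(p,q)$-intrinsic regime, in which both phases are comparable and a joint rescaling must be adopted; the Caccioppoli--Sobolev--Poincaré step has to be carried out separately in each regime. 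The asymmetric exponent $\frac{q\varepsilon}{2}+1$ on the right-hand side of the claim is a direct consequence of this non-homogeneous rescaling. The main obstacle is precisely this lack of scaling invariance: overcoming it requires the sharp balance in \eqref{15} between the Hölder exponent $\alpha$ of $a$ and the gap $q-p$ of the two growth exponents, so that in both regimes the oscillation of $a$ on intrinsic cylinders stays subcritical and Gehring's lemma yields the claimed $L^{1+\varepsilon_0}$ self-improvement with $\varepsilon_0$ depending only on $\data$.
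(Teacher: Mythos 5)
Your proposal matches the approach of the cited proof in \cite{KKM}, which the paper relies on without reproving: a Caccioppoli estimate combined with a parabolic Sobolev--Poincar\'e inequality to obtain a reverse H\"older inequality on intrinsic cylinders, a stopping-time argument with the $p$-intrinsic versus $(p,q)$-intrinsic dichotomy, a Vitali covering, and Gehring-type self-improvement, with the condition $q\le p+\tfrac{2\alpha}{n+2}$ controlling the freezing error in $a(\cdot)$. This is exactly the strategy the paper itself summarizes in its introduction when describing the result of \cite{KKM}.
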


Throughout this paper, $\varepsilon_0$ denotes the constant in the above theorem.
We now state the main theorems. The first result is the local estimate.
\begin{theorem}\label{main theorem}
    Suppose $0<\inf_{z\in C_R}a(z)$ and let $u$ be the weak solution to \eqref{11}. Then there exists $\rho_0=\rho_0(\data,\|H(z,|F|)\|_{1+\varepsilon_0},\|a\|_{\infty})\in(0,1)$ such that for any $\sigma\in (1+\varepsilon_0,\infty)$ and $ Q_{2\rho_0}(z_0)\subset  C_{R/2}$, there holds
    \begin{align*}
\begin{split}
    &\fiint_{Q_{\rho}(z_0)}(H(z,|\na u|))^{\sigma}\,dz
        \le c\left(\fiint_{Q_{2\rho}(z_0)}H(z,|\na u|)\,dz\right)^{\frac{q(\sigma-1)}{2}+1}\\
        &\qquad\qquad+c\left(\fiint_{Q_{2\rho}(z_0)}(H(z,|F|))^{\sigma}\,dz+1\right)^\frac{q}{2},
\end{split}
\end{align*}
where $c=c(\data,\|a\|_{\infty},\sigma)$ and $\rho\in(0,\rho_0)$.
\end{theorem}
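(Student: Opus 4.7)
The plan is to follow a stopping time / Vitali covering / comparison estimate scheme adapted to the two intrinsic geometries of the double-phase operator, building on top of the higher integrability of Theorem~\ref{higher} and the already announced comparison estimates of Proposition~\ref{prop1} and Proposition~\ref{prop2}. After fixing $Q_{2\rho_0}(z_0)\subset C_{R/2}$, I would first reduce to a normalized situation by dividing through by a suitable power of
\[M=\fiint_{Q_{2\rho}(z_0)}H(z,|\na u|)\,dz+\left(\fiint_{Q_{2\rho}(z_0)}(H(z,|F|))^\sigma\,dz+1\right)^{1/\sigma},\]
so that the right-hand side is $O(1)$ and only a distributional estimate for the super-level sets of $H(z,|\na u|)$ needs to be proved.

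Next, for levels $\la$ above a large threshold $\la_0$, I would run a stopping time argument at each density point of $\{H(z,|\na u|)>\la\}$. Because $H$ carries two scales, the natural stopping radius must be chosen intrinsically: in the $p$-phase regime (where $a(z)|\na u|^q$ is dominated by $|\na u|^p$) one picks $r$ so that the scaled average of $|\na u|^p$ equals $\la$, while in the $(p,q)$-phase regime one picks $r$ so that the scaled average of $a(z)|\na u|^q$ equals $\la$. A Vitali covering then produces disjoint intrinsic cylinders $Q_i$ on which the chosen average of $H(z,|\na u|)$ equals exactly $\la$, with a controlled blow-up factor on the enlarged $5Q_i$. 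On each $Q_i$ I apply the relevant comparison estimate (Proposition~\ref{prop1} or Proposition~\ref{prop2}) to decompose $u=v+(u-v)$, where $v$ solves the corresponding homogeneous double-phase system with frozen $b$ on $Q_i$. The scaling invariance of the homogeneous system in the chosen intrinsic geometry, together with the local sup-estimate and/or gradient Hölder regularity of $v$, yields $\|H(\cdot,|\na v|)\|_{L^\infty(\tfrac12 Q_i)}\le c\la$, while the comparison bound together with the VMO assumption \eqref{18} and \eqref{14} controls $H(z,|\na u-\na v|)$ by a quantity that can be made arbitrarily small relative to $\la|Q_i|$ plus a contribution from $H(z,|F|)\mathbf{1}_{\{H(z,|F|)>\delta\la\}}$.

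Combining these pieces on each $Q_i$ and summing over the covering yields the standard good-$\la$ type inequality
\[|\{H(z,|\na u|)>A\la\}\cap Q_\rho(z_0)|\le c\,\eta\,|\{H(z,|\na u|)>\la\}\cap Q_{2\rho}(z_0)|+c|\{H(z,|F|)>\delta\la\}\cap Q_{2\rho}(z_0)|\]
for every $\la\ge\la_0$ and any small $\eta>0$ (at the cost of a smallness requirement on $\rho_0$ entering through the VMO modulus of $b$ and through \eqref{14}). Multiplying by $\la^{\sigma-1}$ and integrating in $\la$ via Cavalieri's principle, absorbing the bad term into the left-hand side thanks to the preliminary $L^{1+\varepsilon_0}$ bound from Theorem~\ref{higher}, I would recover the claimed $L^\sigma$ inequality with the $q/2$ exponent coming, as in Theorem~\ref{higher}, from the worst-case interaction between the $p$- and $q$-phases when translating averages on intrinsic cylinders back to the standard parabolic cylinders.

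The main obstacle I anticipate is book-keeping the two intrinsic geometries so that the covering and absorption work uniformly. In particular, the level $\la_0$ and the smallness parameter $\eta$ must be chosen so that both the $p$-intrinsic stopping time and the $(p,q)$-intrinsic stopping time produce cylinders strictly inside $C_R$ and so that the exit condition $\fiint H(\cdot,|\na u|)=\la$ is achieved by exactly one of the two scalings; this forces the threshold $\la_0$, hence $\rho_0$, to depend on $\|H(z,|F|)\|_{1+\varepsilon_0}$ and on $\|a\|_\infty$, exactly as stated. The remaining estimates, once the correct cylinder is selected, reduce to reading off the comparison bounds from Proposition~\ref{prop1}--\ref{prop2} and applying the standard Calderón--Zygmund distribution function argument.
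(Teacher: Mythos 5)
Your scheme coincides in its essentials with the paper's: fix $Q_{2\rho_0}(z_0)\subset C_{R/2}$, set a threshold level $\Lambda_0$ from the data, run a stopping--time argument at Lebesgue points of the super-level set to produce $p$-intrinsic or $(p,q)$-intrinsic cylinders on which the averaged $H(z,|\na u|)+\delta^{-1}H(z,|F|)$ hits the value $\la_w^p$ (resp.\ $H(w,\la_w)$), extract a pairwise-disjoint Vitali subfamily whose $V$-dilations cover the level set, invoke the comparison solutions $v_w$ of Propositions~\ref{prop1} and~\ref{prop2} together with their Lipschitz bound $\sup|\na v_w|\le c\la_w$, and finally sum over the covering and integrate in the level. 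So the route is the same.

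Where you are imprecise is the closing absorption step, and this is a genuine gap. You claim to "absorb the bad term into the left-hand side thanks to the preliminary $L^{1+\varepsilon_0}$ bound from Theorem~\ref{higher}." This does not work for general $\sigma>1+\varepsilon_0$: the $L^{1+\varepsilon_0}$ estimate does not give a priori finiteness of $\iint (H(z,|\na u|))^\sigma$, which is exactly what you would need to subtract the $c\,\eta\,\iint(H(z,|\na u|))^\sigma$ term from both sides. Moreover, in the good-$\la$ inequality you wrote the set on the left lives in $Q_\rho(z_0)$ while the set on the right lives in $Q_{2\rho}(z_0)$, so the absorption cannot even be attempted directly. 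The paper handles both obstructions together: it works with the truncations $H(z,|\na u|)_k=\min\{H(z,|\na u|),k\}$ (which make the relevant integral finite a priori), derives the level-set inequality on an intermediate pair of radii $\rho\le r_1<r_2\le 2\rho$ with a coefficient that can be made $\le 1/2$ by the choice $\epsilon=2^{-(q+3)}$, and then invokes the iteration Lemma~\ref{lem21} in $(r_1,r_2)$ to absorb before sending $k\to\infty$. Without the truncation and the iteration over nested radii your argument does not close. A secondary, more cosmetic discrepancy: the comparison solution $v_w$ in Propositions~\ref{prop1}--\ref{prop2} has both $b$ and $a$ frozen (to $b_0$ and $a_s$), not just $b$; this matters for the scaling-invariance step that produces the quantitative Lipschitz bound.
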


\begin{remark}
Since Theorem~\ref{main theorem} is local, the estimate holds without assumptions on the boundary is not necessary. In particular, \eqref{111} is not used and we may replace the global VMO condition \eqref{18} with the following local VMO condition
\begin{align}\label{115}
    \lim_{r\to0^+}\sup\limits_{\substack{B_{r}(x_0)\times I_\tau(t_0)\subset  C_R,\\\tau\le r^2}} \fiint_{ B_r(x_0)\times I_\tau(t_0)}|b(x,t)-b_{ B_{r}(x_0)\times I_{\tau}(t_0)}|\,dx\,dt=0.
\end{align} 
We also remark that the assumption $0<\inf_{z\in  C_R}a(z)$ is necessary for comparison estimates. See Section~\ref{sec4} for the detail.
\end{remark}

The next two results are the global estimate. The estimate is deduced from the extension argument using the reflection in \cite[Chapter X]{MR1230384} and \cite{MR1799417}. Note that we may replace the constant dependency $\data$ by $\data_g$ for $\varepsilon_0$ by using the standard energy estimate.
\begin{theorem}\label{main theorem2}
    Suppose $0<\inf_{z\in C_R}a(z)$ and let $u$ be the weak solution to \eqref{11}. Then for any $\sigma\in(1,\infty)$, there holds
    \[\fiint_{ C_R}(H(z,|\na u|))^{\sigma}\,dz
        \le c\left(\fiint_{ C_{R}}(H(z,|F|))^\sigma\,dz+1\right)^{\frac{q}{2}},\]
where $\varepsilon_0=\varepsilon_0(\data_g)\in (0,1)$ and
\begin{align*}
    c=
    \begin{cases}
    c(\data_g,\|a\|_{\infty},\sigma)&\text{if}\quad\sigma\in(1,1+\varepsilon_0],\\
     c(\data_g,\|a\|_{\infty},\sigma,\|H(z,|F|)\|_{1+\varepsilon_0})&\text{if}\quad\sigma\in(1+\varepsilon_0,\infty).
    \end{cases}
\end{align*}
\end{theorem}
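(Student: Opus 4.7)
The plan is to deduce the global estimate from the local one (Theorem~\ref{main theorem}) after extending the solution beyond the parabolic boundary by reflection, so that every point of $\overline{C_R}$ becomes an interior point of a cylinder on which the extension solves a double-phase system of the same structural type. Across each flat lateral face $\{x_i=\pm R\}$ of the cube $D_R$ I would define $\tilde u$ by odd reflection in $x_i$ and extend the coefficients $a,b$ and the datum $F$ by even reflection (with the appropriate sign flip on the $i$-th component of $F$). Iterating across adjacent faces yields a weak solution on an enlarged cube-cylinder $\tilde D\times I_R \supset \overline{C_R}$; odd reflection respects the equation because $\mA(z,\xi)$ depends on $\xi$ only through $|\xi|$ and $\xi$ itself, so reflection acts covariantly on both sides of \eqref{11}. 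The initial condition \eqref{111} allows extending by zero for $t<-R^2$, giving a weak solution on $\tilde D\times(-R^2-\eta,R^2)$ for some $\eta>0$.

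Next I would verify that the extended problem still satisfies all hypotheses of Theorem~\ref{main theorem} and Theorem~\ref{higher}: the reflected $\tilde a$ stays in $C^{\alpha,\alpha/2}$ with comparable seminorm since reflections are isometries, $\inf \tilde a = \inf a>0$, and $\tilde b$ inherits the VMO condition \eqref{115} because a cube straddling a reflection hyperplane splits into two congruent halves on which $\tilde b$ has the same oscillation as $b$ on one side. Once this is in place, every $z_0\in\overline{C_R}$ admits an intrinsic cylinder $Q_{2\rho_0}(z_0)$ strictly inside the extended domain. For $\sigma\in(1+\varepsilon_0,\infty)$, applying Theorem~\ref{main theorem} on these cylinders and summing over a finite Vitali cover of $\overline{C_R}$ yields a bound of $\|H(z,|\na u|)\|_\sigma$ in terms of $\|H(z,|F|)\|_\sigma$ and $\|H(z,|\na u|)\|_1$; the latter is then absorbed through the standard energy estimate obtained by testing \eqref{11} with $u$, which gives
\[\|u\|_{L^\infty(I_R;L^2(D_R))}+\|H(z,|\na u|)\|_{1}\le c(\data_g)\bigl(\|H(z,|F|)\|_{1}+1\bigr).\]
This absorption is precisely the reason the dependency reduces from $\data$ to $\data_g$ in the final constant.

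For $\sigma\in(1,1+\varepsilon_0]$ I would apply Theorem~\ref{higher} to the extension on the same cover instead of Theorem~\ref{main theorem}; the same energy estimate above removes the $\|u\|_{L^\infty(L^2)}$ and $\|H(z,|\na u|)\|_1$ dependencies, so both $\varepsilon_0$ and the multiplicative constant reduce to depending only on $\data_g$ (together with $\|a\|_\infty,\sigma$ as in the theorem statement). The main obstacle is the reflection step itself: one has to justify that odd reflection of $u$ together with even reflection of $a,b$ yields a genuine weak solution of a double-phase system across the interface $\{x_i=R\}$. This requires showing that the natural conormal term $b(z)\mA(z,\na u)\cdot e_i$ matches across the face, which follows from the weak formulation using test functions supported across the interface combined with the antisymmetry of $\tilde u$ in $x_i$; and one has to check the VMO condition for $\tilde b$ on cubes straddling the interface. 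These verifications are standard but delicate, and once they are in place the remaining argument is a routine covering-plus-energy reduction.
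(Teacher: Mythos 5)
Your outline follows the paper's strategy (odd reflection of $u$ and sign-flipped $F$ with even reflection of $a,b$ across the lateral faces, zero extension below the initial time using \eqref{111}, then the local estimate of Theorem~\ref{main theorem} plus a covering and the energy estimate to reduce $\data$ to $\data_g$), but it has a genuine gap at the terminal time. After your extensions the solution lives on $\tilde D\times(-R^2-\eta,R^2)$, whose time interval still ends at $t=R^2$. The cylinders $Q_{2\rho_0}(z_0)$ in Theorem~\ref{main theorem} are two-sided in time, so points $z_0=(x_0,t_0)\in C_R$ with $t_0$ close to $R^2$ do \emph{not} admit such a cylinder inside your extended domain; your claim that ``every $z_0\in\overline{C_R}$ admits an intrinsic cylinder $Q_{2\rho_0}(z_0)$ strictly inside the extended domain'' fails there, and the covering argument does not reach the slab near $t=R^2$. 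There is no reflection principle in the time variable for parabolic systems, so this cannot be repaired by another reflection.

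The paper closes exactly this gap by a separate forward-in-time extension: it extends $b$ and $a$ evenly and $F$ by zero across $\{t=R^2\}$, solves the Dirichlet problem for the extended system on $D_{3R}\times(-9R^2,9R^2)$ (existence is available precisely because $\inf_{z}a>0$ makes the operator of $q$-Laplace type), and uses the uniqueness theorem for this problem to conclude that the new solution coincides with the already-extended $u$ on $D_{3R}\times(-9R^2,R^2)$, hence furnishes a genuine extension of $u$ solving the system on all of $C_{3R}$. Only then do the interior cylinders cover $C_R$ and the local estimate applies everywhere. You should add this existence-plus-uniqueness step (or some substitute argument valid up to the final time); note it is also one of the places where the hypothesis $0<\inf a$ is used in an essential way, beyond the comparison estimates of Section~\ref{sec4}. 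The rest of your verifications (Hölder seminorm preserved under reflection, VMO of the reflected $b$ on cubes straddling the interface, matching of the conormal term, and the energy estimate giving the $\data_g$ dependence) are consistent with the paper.
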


The above estimate can be extended when the infimum of $a$ is zero.
\begin{corollary}\label{main theorem3}
    Suppose $\inf_{z\in  C_{R}}a(z)=0$ and let $u$ be the weak solution to \eqref{11}. Then for any $\sigma\in(1,\infty)$, there holds
    \[\fiint_{ C_R}(H(z,|\na u|))^{\sigma}\,dz
        \le c\left(\fiint_{ C_{R}}(H(z,|F|))^\sigma\,dz+1\right)^{\frac{q}{2}},\]
where $\varepsilon_0=\varepsilon_0(\data_g)\in (0,1)$ and
\begin{align*}
    c=
    \begin{cases}
    c(\data_g,\|a\|_{\infty},\sigma)&\text{if}\quad\sigma\in(1,1+\varepsilon_0],\\
     c(\data_g,\|a\|_{\infty},\sigma,\|H(z,|F|)\|_{1+\varepsilon_0})&\text{if}\quad\sigma\in(1+\varepsilon_0,\infty).
    \end{cases}
\end{align*}
\end{corollary}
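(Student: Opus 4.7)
The plan is to extend Theorem~\ref{main theorem2} to the degenerate setting $\inf_{z\in C_R}a(z)=0$ by a double approximation. First regularize $a$ by $a_\varepsilon=a+\varepsilon$ for $\varepsilon\in(0,1]$ so that $\inf a_\varepsilon>0$, and simultaneously truncate the source by $F_M=F\chi_{\{|F|\le M\}}$ for $M\ge 1$ in order to keep the right-hand side in the regularized version of Theorem~\ref{main theorem2} integrable (without truncation, the extra term $\varepsilon|F|^q$ appearing in $H_\varepsilon(z,|F|)=|F|^p+(a(z)+\varepsilon)|F|^q$ need not be $L^\sigma$-integrable, since only $H(z,|F|)\in L^\sigma$ is assumed). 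The crucial structural feature that makes the scheme work is that the constant in Theorem~\ref{main theorem2} depends on $\|a\|_\infty$ and $[a]_\alpha$ but not on $\inf a$; hence it remains uniformly bounded as $\varepsilon\to 0$.

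Let $u_{\varepsilon,M}$ denote the weak solution of the Dirichlet problem \eqref{11} with $a$ replaced by $a_\varepsilon$ and $F$ replaced by $F_M$; existence follows from the theory developed in \cite{KKS} because $\inf a_\varepsilon>0$ and $F_M\in L^\infty$. Applying Theorem~\ref{main theorem2} to $u_{\varepsilon,M}$ gives
\[\fiint_{C_R}(H_\varepsilon(z,|\na u_{\varepsilon,M}|))^\sigma\,dz \le c\Big(\fiint_{C_R}(H_\varepsilon(z,|F_M|))^\sigma\,dz+1\Big)^{q/2}.\]
Since $\|a_\varepsilon\|_\infty\le\|a\|_\infty+1$, $[a_\varepsilon]_\alpha=[a]_\alpha$, and (coupling for instance $\varepsilon M^q\le 1$) $\|H_\varepsilon(z,|F_M|)\|_{1+\varepsilon_0}\le \|H(z,|F|)\|_{1+\varepsilon_0}+|C_R|^{1/(1+\varepsilon_0)}$, the constant $c$ is uniform in both $\varepsilon$ and $M$.

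Next, fix $M$ and pass $\varepsilon\to 0^+$. As $|F_M|\le M$, dominated convergence yields $\iint(H_\varepsilon(z,|F_M|))^\sigma\,dz\to\iint(H(z,|F_M|))^\sigma\,dz$, while the uniform left-hand bound produces weak compactness of $\{u_{\varepsilon,M}\}$ in the natural Sobolev-type space associated with $H$. A Minty-type monotonicity argument for the double-phase operator identifies a weak limit $u_M$ as a weak solution of \eqref{11} with coefficient $a$ and source $F_M$, and lower semicontinuity of $v\mapsto\iint H(z,|\na v|)^\sigma\,dz$ transfers the estimate to $u_M$. Finally, pass $M\to\infty$: the monotone convergence $H(z,|F_M|)\nearrow H(z,|F|)$ handles the right-hand side, weak compactness together with uniqueness of weak solutions from \cite{KKS} identifies the limit with $u$, and lower semicontinuity concludes.

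The main obstacle is to identify the weak limits as solutions of the target PDEs at each approximation step. Passing to the limit in the nonlinear term $\mA_\varepsilon(z,\na u_{\varepsilon,M})\cdot\na\varphi$ requires a.e.\ convergence of gradients or a monotonicity argument adapted to the $(p,q)$-growth structure, which is delicate precisely in the regions where $a$ degenerates; for this we rely on the compactness and monotonicity methods developed in \cite{KKS,MR3985549}. Once this identification is in place, the rest is a routine combination of uniform $L^\sigma$-estimates, monotone/dominated convergence on the right-hand side, and lower semicontinuity on the left.
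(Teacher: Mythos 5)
Your strategy is essentially the paper's: replace $a$ by $a+\ep$ so that Theorem~\ref{main theorem2} applies, truncate $F$ so the right-hand side is finite, note that every constant is independent of $\inf a$, and pass to the limit by weak compactness and lower semicontinuity. The difference is in how the limit is handled. The paper never runs a Minty argument: it quotes the existence/uniqueness/approximation results of \cite{KKS} (Theorems 2.6 and 2.7, together with the proof of Theorem 2.5 there), which directly supply one sequence $u_l$ of solutions to the problems with $a_l=a+\ep_l$ and truncated data $F_l$ satisfying $\iint_{C_R}H(z,|\na u-\na u_l|)\,dz\to0$ and the pointwise domination $H_l(z,|F_l|)\le 2H(z,|F|)$. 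With strong $H$-energy convergence of the gradients already tied to the given $u$, the limit passage reduces to the uniform $L^\sigma$ bound, weak compactness of $H(z,|\pa_i u_l^j|)$ in $L^\sigma$, a.e.\ convergence, and weak lower semicontinuity; no identification of a limit as a solution is needed. Your sequential scheme ($\ep\to0$ at fixed $M$, then $M\to\infty$) instead forces you to identify the intermediate limit $u_M$ as a weak solution with the degenerate coefficient $a$ and to invoke uniqueness in the class $\iint H(z,|\na\cdot|)\,dz<\infty$; the Minty-type identification you defer to ``compactness and monotonicity methods'' is exactly the delicate point under $(p,q)$-growth (admissibility of $u_{\ep,M}-u_M$ as a test function, Lavrentiev-type obstructions where $a$ degenerates), and it is precisely what the Lipschitz truncation machinery of \cite{KKS} was developed to handle. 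So your argument can be closed, but only by re-proving (or citing in full) the approximation theorem the paper simply quotes; using \cite{KKS} as the paper does collapses your two limit passages into one, and its bound $H_l(z,|F_l|)\le 2H(z,|F|)$ plays the role of your coupling $\ep M^q\le1$ in keeping the constant dependent only on $\|H(z,|F|)\|_{1+\varepsilon_0}$.
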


\section{Comparison estimates}\label{sec4}
    In this section, we assume $0<\inf_{z\in  C_{R}}a(z)$ and provide comparison estimates which are used for the proof of Theorem~\ref{main theorem}. This assumption is required to guarantee the existence of non-zero Dirichlet boundary value problems. Indeed we observe for any $\xi\in \RR^{Nn}$
\[\inf_{z\in  C_{R}}a(z)|\xi|^q\le \mA(z,\xi)\cdot\xi\quad\text{and}\quad|\mA(z,\xi)|\le 2^{q-1}(1+\lVert a\rVert_{\infty})(1+|\xi|^{q-1}),\]
which means $\mA(z,\xi)$ is a $q$-Laplace type operator and corresponding weak solution $u$ to \eqref{11} satisfies
\[u\in C(I_{R};L^2(D_R)\cap L^q(I_R;W_0^{1,q}(D_R,\RR^N)),\quad
    u_t\in L^{q'}(I_{R};W^{-1,q'}(D_R,\RR^N)).\]
The existence result of the parabolic $q$-Laplace type system is applicable in this section.

For the comparison estimates, it is necessary to keep track of the dependency of constants. We will use $\epsilon,\delta,K,\rho_0$ as constants. For $\epsilon\in(0,1)$ will be determined later as $\tfrac{1}{2^{q+3}}$ in \eqref{473}, constants $\delta\in(0,1)$, $K>1$ and $\rho_0>0$ will be chosen in this section. To be specific, $\delta$ will be determined depending on $\data$ and $\epsilon$ while $K$ will be chosen to be
\[K=180(1+[a]_{\alpha})\left(\frac{1}{|B_1|}\iint_{Q_{2\rho_0}(z_0)}\left(H(z,|\na u|)+\delta^{-1}H(z,|F|)\right)\,dz+1\right)^\frac{\alpha}{n+2}.\]
Finally, $\rho_0\in(0,1)$ is determined depending on $\data$, $\|a\|_{\infty}$, $\epsilon$ and $\|H(z,|F|)\|_{1+\varepsilon_0}$ and plays a role in the multiplication of $K$ by $\rho_0$ small enough. The circular logic never appears since $\rho_0$ is determined after $\delta$ is chosen. For each lemma, we take $\delta$ and $\rho_0$ sufficiently small and the constants $\delta$ and $\rho_0$ are the smallest constants among lemmas.
In order to simplify our notation, we denote
\[c(\data_{\delta})=c(\data,K)\quad\text{and}\quad V=9K,\]
where $V$ will be chosen to be a covering constant in the Vitali covering argument. 

Employing the intrinsic geometry approach in \cite{KKM}, we consider the $p$-intrinsic cylinder case and the $(p,q)$-intrinsic cylinder case.

\subsection{$p$-intrinsic case} 

The $p$-intrinsic cylinder is defined as
\[Q_{\rho}^\la(z_0)=B_{\rho}(x_0)\times I_{\rho}^\la(t_0),
    \quad I_{\rho}^\la(t_0)=(t_0-\la^{2-p}\rho^2,t_0+\la^{2-p}\rho^2),\]
with a center point $z_0=(x_0,t_0)\in \RR^n\times \RR$, $\rho>0$ and $\la\ge 1$. 

This subsection aims to prove the following estimates.
\begin{proposition}\label{prop1}
Let $\epsilon>0$ be a fixed constant. There exist $\delta=\delta(\data,\epsilon)\in(0,1)$, $\rho_0=\rho_0(\data, \|a\|_{\infty}, \|H(z,|F|)\|_{1+\varepsilon_0}, \epsilon)\in (0,1)$ such that if there exists an intrinsic cylinder $Q_{16V\rho_w}^{\la_w}(w)\subset Q_{2\rho_0}(z_0)\subset   C_{R/2}$ for some $\la_w>1$ satisfying
\begin{enumerate}[label=(\roman*),series=theoremconditions]
    \item $p$-intrinsic case: $K^2\la_w^p\ge a(w)\la_w^q$,
    \item stopping time argument for $p$-intrinsic cylinder: 
    \begin{enumerate}[label=(\alph*),series=theoremconditions]
        \item $\fiint_{Q_{16V\rho_w}^{\la_w}(w)}\left(H(z,|\na u|)+\delta^{-1}H(z,|F|)\right)\,dz< \la_w^p$,
        \item $\fiint_{Q_{\rho_w}^{\la_w}(w)}\left(H(z,|\na u|)+\delta^{-1}H(z,|F|)\right)\,dz=\la_w^p$,
    \end{enumerate}
\end{enumerate}
then there exists a weak solution $v_w$ to
\[\pa_tv_w-\dv(b_0(|\na v_w|^{p-2}\na v_w+a_s|\na v_w|^{q-2}\na v_w))=0\]
in $Q_{2V\rho_w}^{\la_w}(w)$ such that
\[\iint_{Q_{V\rho_w}^{\la_w}(w)}H(z,|\na u-\na v_w|)\,dz\le \epsilon \la_w^p|Q_{\rho_w}^{\la_w}|\]
and the following local Lipschitz estimate holds
\[\sup_{z\in Q_{V\rho_w}^{\la_w}(w)}|\na v_w(z)|\le c\la_w,\]
where $c=c(\data_\delta)>0$,
\[b_0=b_{Q_{4V\rho_w}^{\la_w}(w)}\quad\text{and}\quad   a_s=\sup_{z\in Q_{2V\rho_w}^{\la_w}(w)}a(z).\]
\end{proposition}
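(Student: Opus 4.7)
The plan is to define $v_w$ as the weak solution of the Dirichlet problem for the homogeneous frozen-coefficient system
\[\pa_t v_w-\dv\bigl(b_0(|\na v_w|^{p-2}\na v_w+a_s|\na v_w|^{q-2}\na v_w)\bigr)=0\]
on $Q_{2V\rho_w}^{\la_w}(w)$, with $v_w=u$ on the parabolic boundary. Since $a_s\ge \inf_{C_R}a(z)>0$, the frozen operator is of $q$-Laplace type, so existence, uniqueness and the standard parabolic energy estimate for $v_w$ follow from the parabolic $q$-Laplace theory invoked at the beginning of Section~\ref{sec4}. Because $v_w-u$ has zero parabolic-boundary trace, it is admissible as a test function after Steklov-average regularization in time.

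\textbf{Key intrinsic consequence.} The $p$-intrinsic hypothesis (i), together with the H\"older regularity \eqref{15}--\eqref{16} and the smallness of $K\rho_0$ to be imposed on $\rho_0$, yields for every $z\in Q_{2V\rho_w}^{\la_w}(w)$
\[a(z)\le a(w)+[a]_\alpha(2V\rho_w)^\alpha\le K^2\la_w^{p-q}+c(K\rho_0)^\alpha\le cK^2\la_w^{p-q},\]
so that $a_s\la_w^q\le cK^2\la_w^p$ throughout the comparison cylinder. Consequently the $q$-phase of the frozen system is subordinate to the $p$-phase at the intrinsic scale, and under the rescaling $\tilde v(y,s)=(\la_w\rho_w)^{-1}v_w(x_0+\rho_w y,\,t_0+\la_w^{2-p}\rho_w^2 s)$ the stopping time condition (ii)(a) provides the scale-invariant energy bound for $\tilde v$ on the unit cylinder.

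\textbf{Comparison estimate.} Testing the difference of the two equations by $u-v_w$ and using the monotonicity inequality for the double-phase operator produces, after the Steklov identity for the time term, a bound of the form
\[\iint_{Q_{2V\rho_w}^{\la_w}(w)} H(z,|\na u-\na v_w|)\,dz\le c\,\mathrm{Err}_b+c\,\mathrm{Err}_a+c\,\mathrm{Err}_F,\]
where $\mathrm{Err}_b$ encodes the VMO defect $b(z)-b_0$, handled by \eqref{18} and a high/low-gradient level-set split in which the gain of integrability of $H(z,|\na u|)$ from Theorem~\ref{higher} is essential; $\mathrm{Err}_a$ encodes the defect $a(z)-a_s$, controlled by $[a]_\alpha(V\rho_w)^\alpha\le c(K\rho_0)^\alpha$ times $\iint|\na u|^q$ and rendered small by the intrinsic consequence above; and $\mathrm{Err}_F\le c\,\delta^{-1}\iint H(z,|F|)\,dz\le c\la_w^p|Q_{\rho_w}^{\la_w}|$ by the stopping time (ii)(b) and definition of $\delta$. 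Choosing first $\delta=\delta(\data,\epsilon)$ small and then $\rho_0$ small depending on $(\data,\|a\|_\infty,\|H(z,|F|)\|_{1+\varepsilon_0},\epsilon,\delta)$, exactly in the order fixed at the start of Section~\ref{sec4}, delivers the required bound $\epsilon\la_w^p|Q_{\rho_w}^{\la_w}|$ and no circularity arises.

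\textbf{Lipschitz estimate and main obstacle.} For $\sup|\na v_w|\le c\la_w$ I would pass again to $\tilde v$, which solves a homogeneous frozen-coefficient double-phase equation whose renormalized $q$-coefficient $\tilde a$ is bounded by $cK^2$ by the intrinsic consequence. On the unit cylinder the scale-invariant energy bound for $\tilde v$ is in force, and the parabolic $p$-Laplace Lipschitz regularity, applied to this frozen system with the lower-order $q$-term treated as a controlled perturbation via the structure constants, gives $\|\na\tilde v\|_{L^\infty}\le c(\data_\delta)$; undoing the scaling yields the claim. The main obstacle is the bookkeeping of dependencies, in particular converting the VMO and H\"older defects into genuinely small factors on the prescribed intrinsic cylinder through Theorem~\ref{higher}, and ensuring that the $p$-Laplace Lipschitz constant obtained after the intrinsic rescaling is honestly independent of $\la_w$ so that the dependence on $\delta$ is only through $K$.
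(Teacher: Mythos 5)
Your overall architecture (solve a frozen-coefficient homogeneous Dirichlet problem, compare by testing with the difference, make the three error terms small via $\delta$, the VMO condition and $\rho^\alpha\la^q\le\epsilon\la^p$, then get the Lipschitz bound by intrinsic rescaling and an Orlicz/Uhlenbeck-type result) matches the paper in spirit, but the single-step comparison you propose has a genuine gap: the two error terms you call $\mathrm{Err}_b$ and $\mathrm{Err}_a$ cannot be closed with the information available for $u$ itself. After Young's inequality, absorbing the defect of $a$ requires a bound of the form $\rho^\alpha\fiint_{Q^{\la}_{2V\rho}}|\na u|^q\,dz\le \epsilon\la^p$, i.e.\ an $L^q$-gradient estimate $\fiint|\na u|^q\,dz\le c\la^q$ on the intrinsic cylinder; but in the $p$-intrinsic case the stopping-time conditions only give $\fiint\bigl(|\na u|^p+a(z)|\na u|^q\bigr)\,dz\le\la^p$, and since $a(\cdot)$ may be very small this does not control $\fiint|\na u|^q$ at the scale $\la^q$ (your ``intrinsic consequence'' $a_s\la^q\le cK^2\la^p$ bounds the coefficient at level $\la$, not the integral of $|\na u|^q$). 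Similarly, absorbing the VMO defect of $b$ against $|\mA(z,\na u)|$ via H\"older requires a \emph{scale-invariant} reverse H\"older estimate $\fiint_{Q^\la}(H(z,|\na u|))^{1+\varepsilon}\,dz\le c\la^{p(1+\varepsilon)}$ on the intrinsic cylinder; Theorem~\ref{higher} is stated on standard cylinders, and the way it is used in Lemma~\ref{lem42} only yields a non-scale-invariant bound $\iint_{Q^\la_{V\rho}}(H(z,|\na u|))^{1+\varepsilon_0}\,dz\le c$, which is too weak to produce a factor $\epsilon\la^p|Q^\la_\rho|$ from the VMO modulus alone.

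This is exactly why the paper runs a chain of three comparison problems instead of one: first $\zeta$ (homogeneous, variable $b$ and $a$, boundary data $u-u_0$), whose error is controlled purely by $\delta^{-1}H(z,|F|)$ and the stopping time (Lemma~\ref{lem32}); then $\eta$ (coefficient $b$ frozen to $b_0$), where the VMO defect is paired with $H(z,|\na\zeta|)$ and closed using the quantitative intrinsic higher integrability of $\na\zeta$ obtained by rescaling the \emph{homogeneous} equation and applying Theorem~\ref{higher} to $\zeta_\la$ (Lemma~\ref{lem33}); and only then $v$ (coefficient $a$ frozen to $a_s$), where the H\"older defect is paired with $|\na\eta|^q$ and closed using the nontrivial bound $\fiint|\na\eta|^q\,dz\le c\la^q$ of Lemma~\ref{lem35}, which relies on the homogeneity and the frozen $b_0$ (via the interpolation/iteration argument adapted from the $(p,q)$-growth theory) and is not available for $u$. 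Two further points you gloss over: the smallness of the $F$-term is really $cK^{n+2}\delta\la^p$, and since $K$ itself grows like $\delta^{-\alpha/(n+2)}$ one must check $K^{n+2}\delta\to0$, which fails naively when $\alpha=1$ and requires the extra H\"older-in-$\rho_0$ argument at the end of Lemma~\ref{lem32}; and for the Lipschitz bound, treating the $q$-phase as a ``perturbation of the $p$-Laplacian'' is not a quotable result --- the paper instead verifies the hypotheses $\varphi_\la'(s)\sim s\varphi_\la''(s)$ for $\varphi_\la(s)=b_0(\tfrac1p s^p+\tfrac1q a_s\la^{q-p}s^q)$ and applies the Orlicz-growth Lipschitz theorem, using $a_s\la^{q-p}\le c(\data_\delta)$, which is the correct way to make the constant independent of $\la$.
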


For simplicity, we assume $w=0$ and write $a_0=a(0)$, $\la=\la_w$ and $\rho=\rho_w$. 
We also denote the assumption in the above proposition $K^2\la^p\ge a_0\la^q$,
 \begin{align}\label{36}
     \fiint_{Q_{16V\rho}^\la}\left(H(z,|\na u|)+\delta^{-1}H(z,|F|)\right)\,dz< \la^p
 \end{align}
and
\begin{align}\label{37}
    \fiint_{Q_{\rho}^\la}\left(H(z,|\na u|)+\delta^{-1}H(z,|F|)\right)\,dz=\la^p.
\end{align}

Denoting $u_0=u_{Q_{8V\rho}^\la}$, we first provide $L^\infty-L^2$ and $L^p$ estimates of $u$.
\begin{lemma}\label{lem31}
 There exists $c=c(\data_\delta)$ such that
\[\la^{p-2}\sup_{t\in I_{8V\rho}^{\la}}\fint_{B_{8V\rho}}\frac{|u(x,t)-u_0|^2}{(8V\rho)^2}\,dx+\fiint_{Q_{8V\rho}^\la}\frac{|u-u_0|^p}{(8V\rho)^p}\,dz\le c\la^p.\]
\end{lemma}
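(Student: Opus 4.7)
\emph{Proof plan.} The estimate is the natural intrinsic energy bound on the cylinder $Q_{8V\rho}^\la$, and my strategy combines a parabolic Caccioppoli inequality for the double-phase system (which yields the sup-$L^2$ part) with a parabolic Poincar\'e inequality in intrinsic cylinders (which yields the $L^p$ part). Both outputs will be converted to $\la^p$ bounds through the stopping-time information \eqref{36}: it gives $\fiint_{Q_{16V\rho}^\la}H(z,|\na u|)\,dz<\la^p$ and $\fiint_{Q_{16V\rho}^\la}H(z,|F|)\,dz<\delta\la^p$, and these are the two quantities to which every other integral on the right-hand side will be reduced.

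For the $L^\infty$--$L^2$ term I plan to test the weak formulation of \eqref{11} against $\varphi^2(u-u_0)$, where $\varphi(x,t)=\eta(x)\chi(t)$ is a product cutoff with $\eta\equiv 1$ on $B_{8V\rho}$, $\eta\in C_c^\infty(B_{16V\rho})$ and $|\na\eta|\lesssim 1/\rho$, and $\chi$ a Lipschitz time cutoff supported in $I_{16V\rho}^\la$, equal to $1$ on $I_{8V\rho}^\la$ with $|\chi'|\lesssim 1/(\la^{2-p}\rho^2)$. After Steklov averaging to legitimize the time-derivative term and routine Young's inequality manipulations, this produces
\[\sup_{t\in I_{8V\rho}^\la}\int_{B_{8V\rho}}|u-u_0|^2\,dx\le c\iint_{Q_{16V\rho}^\la}\left(H(z,|\na u|)+H(z,|F|)+\tfrac{|u-u_0|^p}{\rho^p}+a(z)\tfrac{|u-u_0|^q}{\rho^q}+\tfrac{|u-u_0|^2}{\la^{2-p}\rho^2}\right)dz.\]
After dividing by $|B_{8V\rho}|(8V\rho)^2\la^{2-p}$, the first two integrals become $\le c\la^p$ by \eqref{36}, while the remaining $|u-u_0|^{\bullet}$ integrals are controlled by the Poincar\'e step described next.

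For the $L^p$ term I would apply the parabolic Poincar\'e inequality on the intrinsic cylinder $Q_{8V\rho}^\la\subset Q_{16V\rho}^\la$, which reduces $\fiint|u-u_0|^p/\rho^p$ to $\fiint|\na u|^p$ plus source-term contributions; \eqref{36} then yields the bound $\le c\la^p$. The same Poincar\'e estimate controls all residual $|u-u_0|$ integrals from the Caccioppoli step. I expect the main obstacle to be the $q$-Laplace contribution $a(z)|u-u_0|^q/\rho^q$, which is not naturally bounded by $\la^p$ in the $p$-intrinsic regime. To handle it I would use the $p$-intrinsic assumption $K^2\la^p\ge a(0)\la^q$ together with the H\"older continuity \eqref{16} to deduce $\sup_{Q_{16V\rho}^\la}a(z)\le a(0)+c[a]_\alpha(K\rho)^\alpha$; since $K\rho_0$ is chosen small and the dependence on $K$ is absorbed into $\data_\delta$, this pointwise control of $a(z)$ combined with a Poincar\'e--Sobolev bound on $|u-u_0|^q/\rho^q$ produces the required $c(\data_\delta)\la^p$ bound.
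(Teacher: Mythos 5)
The paper's own ``proof'' is a two-line citation: it rewrites hypothesis~\eqref{36} as $\fiint_{Q_{16V\rho}^\la}(H(z,|\na u|)+H(z,|F|))\,dz\le\la^p$, observes $K^2\la^p\ge a_0\la^q$, and then invokes \cite[Lemma 4.2 and Lemma 5.1]{KKM}. You are instead reproving those two lemmas from scratch. That is a legitimately different route — self-contained where the paper defers — and your general strategy (a Caccioppoli estimate for the sup--$L^2$ part, a parabolic Poincar\'e for the $L^p$ part, the $p$-intrinsic inequality plus H\"older continuity to tame the $q$-phase) does match what sits behind the citation. Your observation that $\sup_{Q_{16V\rho}^\la}a\le a_0+c[a]_\alpha(K\rho)^\alpha$ with $a_0\la^q\le K^2\la^p$ and $K\rho_0$ small is exactly the mechanism that makes the $p$-intrinsic case work, and the $K$-dependence correctly folds into $\data_\delta$.

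The gap is in how you order and close the two estimates. You present them as sequential: first the Caccioppoli bound for $\sup_t\fint|u-u_0|^2$, with ``the remaining $|u-u_0|^\bullet$ integrals controlled by the Poincar\'e step described next,'' then that Poincar\'e step, including a ``Poincar\'e--Sobolev bound on $|u-u_0|^q/\rho^q$.'' But these two steps are not independent. The gluing argument gives a parabolic Poincar\'e for $\fiint|u-u_0|^p/\rho^p$ in terms of $\fiint|\na u|^p$ plus source terms without touching the sup--$L^2$ norm, and that part is fine. However, the $q$-phase term $\fiint a(z)|u-u_0|^q/\rho^q$ cannot be reduced to $\fiint|\na u|^q$ (only $H(z,|\na u|)$ is controlled, not $|\na u|^q$ by itself); the only available route is the parabolic Gagliardo--Nirenberg/Sobolev interpolation using $q\le p+\tfrac{2\alpha}{n+2}\le p\tfrac{n+2}{n}$, which necessarily re-introduces $\la^{p-2}\sup_t\fint|u-u_0|^2/\rho^2$ — the very quantity the Caccioppoli estimate is trying to bound. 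Your plan does not acknowledge this circularity or supply the absorption argument (the interpolation must produce the sup--$L^2$ term with a small prefactor so that it can be moved to the left of the combined Caccioppoli inequality, and only then does $\la^p$ emerge). As written, if one executes the Caccioppoli step and then ``plugs in'' the Poincar\'e--Sobolev bound for the $q$-term, one is assuming a bound on $\sup_t\fint|u-u_0|^2$ that has not yet been established. To repair the plan, either run the Caccioppoli and Sobolev estimates simultaneously and close by absorption, or explicitly establish first that the interpolation constant multiplying the sup--$L^2$ term can be made $<1$ before the Caccioppoli step — this is precisely the technical content of \cite[Lemma 4.2 and 5.1]{KKM} that the paper is leaning on.
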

\begin{proof}
 The assumptions \eqref{36} gives
 \[\fiint_{Q_{16V\rho}^\la}\left(H(z,|\na u|)+H(z,|F|)\right)\,dz\le \la^p.\]
 Since we have $K^2\la^p\ge a_0\la^q$, the conclusion follows from \cite[Lemma 4.2 and Lemma 5.1]{KKM}.
\end{proof}

The next lemma will be used in this and the next section.
\begin{lemma}\label{lem42}
   Suppose $c=c(\data_\delta,\|a\|_{\infty},\|H(z,|F|)\|_{1+\varepsilon_0})$ is a constant. Then there exists $\rho_0=\rho_0(\data_\delta,\|a\|_{\infty},\|H(z,|F|)\|_{1+\varepsilon_0},\ep)\in(0,1)$ such that 
\[c\rho^\alpha\la^q\le \frac{1}{(2V)^{n+2}2^{2q}3}\epsilon\la^p.\]
 \end{lemma}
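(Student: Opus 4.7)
The plan is to rewrite $c\rho^\alpha\la^q=c\la^p\cdot\rho^\alpha\la^{q-p}$ and reduce the claim to showing that $\rho^\alpha\la^{q-p}$ can be made arbitrarily small by choosing $\rho_0$ small enough. Since $V=9K$ is bounded by a constant depending only on $\data_\delta$, the quantity $\epsilon/[(2V)^{n+2}2^{2q}3]$ on the right is a positive constant, so the whole inequality amounts to a smallness assertion on $\rho^\alpha\la^{q-p}$.

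To bound $\la$ from above in terms of $\rho$, I would first apply Theorem~\ref{higher} at $z_0$ on a fixed macroscopic scale (small enough that the enlarged cylinder still lies in $C_R$) to obtain a uniform bound
\[
\iint_{Q_{2\rho_0}(z_0)}H(z,|\na u|)^{1+\varepsilon_0}\,dz\le C(\data,\|a\|_{\infty},\|H(z,|F|)\|_{1+\varepsilon_0}).
\]
Combined with the assumed $L^{1+\varepsilon_0}$-integrability of $H(z,|F|)$ and H\"older's inequality on $Q_\rho^\la\subset Q_{2\rho_0}(z_0)$, the stopping-time equality (ii)(b) yields
\[
\la^p|Q_\rho^\la|=\iint_{Q_\rho^\la}\left(H(z,|\na u|)+\delta^{-1}H(z,|F|)\right)dz\le |Q_\rho^\la|^{\varepsilon_0/(1+\varepsilon_0)}C_1.
\]
Substituting $|Q_\rho^\la|=2|B_1|\la^{2-p}\rho^{n+2}$ and rearranging yields
\[
\la\le C_2\rho^{-\gamma},\qquad \gamma:=\frac{n+2}{p\varepsilon_0+2}.
\]

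Plugging this back, $\rho^\alpha\la^{q-p}\le C_3\rho^{\alpha-\gamma(q-p)}$. By the growth condition \eqref{15}, $(n+2)(q-p)\le 2\alpha$, so
\[
\alpha-\gamma(q-p)\ge\alpha-\frac{2\alpha}{p\varepsilon_0+2}=\frac{\alpha p\varepsilon_0}{p\varepsilon_0+2}>0,
\]
using $p\ge 2$, $\varepsilon_0>0$, and $\alpha>0$. Denote this positive exponent by $\beta$; then $\rho^\alpha\la^{q-p}\le C_3\rho^\beta$, and taking $\rho_0$ so small that $cC_3\rho_0^\beta\le\epsilon/[(2V)^{n+2}2^{2q}3]$ completes the argument. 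Note that $\rho_0$ then depends only on $\data_\delta$, $\|a\|_{\infty}$, $\|H(z,|F|)\|_{1+\varepsilon_0}$, and $\epsilon$, as required.

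The main obstacle is securing the strict positivity of $\beta$. A cruder bound obtained only from the $L^1$-integrability of $H(z,|\na u|)+\delta^{-1}H(z,|F|)$ would give $\la\lesssim\rho^{-(n+2)/2}$ and turn the exponent into $\alpha-(n+2)(q-p)/2$, which by \eqref{15} is merely nonnegative and vanishes at the critical endpoint $q=p+2\alpha/(n+2)$. The gain from Theorem~\ref{higher} is precisely to replace the denominator $2$ by $p\varepsilon_0+2>2$ and create the strictly positive margin $\beta$ that forces $\rho_0^\beta$ below the prescribed threshold.
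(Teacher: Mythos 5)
Your proposal is correct and takes essentially the same route as the paper: a uniform $L^{1+\varepsilon_0}$ bound from Theorem~\ref{higher}, H\"older's inequality applied to the stopping-time identity \eqref{37}, and exponent arithmetic using $q\le p+\tfrac{2\alpha}{n+2}$ with the margin $\varepsilon_0>0$ producing the positive power of $\rho$. The only difference is organizational: you solve explicitly for $\la\le C\rho^{-(n+2)/(p\varepsilon_0+2)}$ and bound $\rho^\alpha\la^{q-p}$, whereas the paper splits $\la^q=\la^{q-\gamma}\la^\gamma$ with $\gamma=\tfrac{\alpha p}{n+2}$ and absorbs the remaining $\la$-powers into $\la^p$ using $\la>1$; both yield a positive exponent ($\tfrac{\alpha p\varepsilon_0}{p\varepsilon_0+2}$ versus $\tfrac{\alpha\varepsilon_0}{1+\varepsilon_0}$) and the same conclusion.
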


\begin{proof}
    From Theorem~\ref{higher}, there exist $c=c(\data,\|a\|_{\infty})$ and $\varepsilon_0=\varepsilon_0(\data)\in (0,1)$ such that 
   \begin{align*}
   \begin{split}
        &\fiint_{ C_{R/2}}(H(z,|\na u|))^{1+\varepsilon_0}\,dz\le c\left(\fiint_{ C_{R}}H(z,|\na u|)\,dz\right)^{1+\frac{q\varepsilon_0}{2}}\\
        &\qquad\qquad+c\left(\fiint_{ C_R}(H(z,|F|))^{1+\varepsilon_0}\,dz\right)^\frac{q}{2}.
   \end{split}
   \end{align*}
   Since we assumed $Q_{16V\rho}^\la\subset  C_{R/2}$, there exists $c=c(\data,\|a\|_{\infty},\|H(z,|F|)\|_{1+\varepsilon_0})$ such that
   \begin{align}\label{313}
       \iint_{Q_{V\rho}^\la}(H(z,|\na u|))^{1+\varepsilon_0}\,dz\le c.
   \end{align}
   By  applying H\"older's inequality to \eqref{37}, it follows that
    \begin{align*}
        \begin{split}
            &\la^p
            =\fiint_{Q_{\rho}^\la}\left(H(z,|\na u|)+\delta^{-1}H(z,|F|)\right)\,dz\\
            &\qquad\le \left(\fiint_{Q_{\rho}^\la}(H(z,|\na u|)+\delta^{-1}H(z,|F|))^{1+\varepsilon_0}\,dz\right)^\frac{1}{1+\varepsilon_0}\\
            &\qquad\qquad\le c(\data_\delta)\left(\fiint_{Q_{V\rho}^\la}(H(z,|\na u|)+H(z,|F|))^{1+\varepsilon_0}\,dz\right)^\frac{1}{1+\varepsilon_0}.
        \end{split}
    \end{align*}
Denoting $\gamma=\tfrac{\alpha p }{n+2}$, there holds
\begin{align*}
\begin{split}
    &\rho^\alpha\la^q=\rho^\alpha \la^{q-\gamma}\la^{\gamma}\\
    &\qquad\le c(\data_\delta)\rho^\alpha\la^{q-\gamma}\left(\fiint_{Q_{V\rho}^\la}(H(z,|\na u|)+H(z,|F|))^{1+\varepsilon_0}\,dz\right)^\frac{\gamma}{p(1+\varepsilon_0)}.
\end{split}
\end{align*}
Since we have
\begin{align*}
    \begin{split}
     &\left(\fiint_{Q_{V\rho}^\la}(H(z,|\na u|)+H(z,|F|))^{1+\varepsilon_0}\,dz\right)^\frac{\gamma}{p(1+\varepsilon_0)}\\
     &\qquad\le c(\data_\delta)(V\rho)^{-\frac{(n+2)\gamma}{p(1+\varepsilon_0)}}\la^{\frac{(p-2)\gamma}{p(1+\varepsilon_0)}}\\
     &\qquad\qquad\times \left(\iint_{Q_{V\rho}^\la}(H(z,|\na u|)+H(z,|F|))^{1+\varepsilon_0}\,dz\right)^\frac{\gamma}{p(1+\varepsilon_0)},
    \end{split}
\end{align*}
\eqref{313} leads to
\[\rho^\alpha\la^q
    \le c(\data_\delta,\|a\|_{\infty},\|H(z,|F|)\|_{1+\varepsilon_0})\rho^{\alpha-\frac{(n+2)\gamma}{p(1+\varepsilon_0)}}\la^{q-\gamma+\frac{(p-2)\gamma}{p(1+\varepsilon_0)}}.\]
    Note that the following inequalities hold
    \[\frac{(n+2)\gamma}{p(1+\varepsilon_0)}=\frac{\alpha}{1+\varepsilon_0}\quad \text{and}\quad q-\gamma+\frac{(p-2)\gamma}{p}=q-\frac{2\gamma}{p}\le p\]
    and we obtain
    \[\rho^\alpha\la^q\le c(\data_\delta,\|a\|_{\infty},\|H(z,|F|)\|_{1+\varepsilon_0})\rho_0^{\frac{\alpha\varepsilon_0}{1+\varepsilon_0}}\la^p.\]
    The conclusion follows by taking $\rho_0$ sufficiently small enough.
\end{proof}

 We now construct suitable homogeneous functions to obtain the comparison estimates. Let $\zeta\in C(I_{8V\rho}^\la;L^2(B_{8V\rho},\RR^N))\cap L^q(I_{8V\rho}^\la;W^{1,q}(B_{8V\rho},\RR^N))$ be the weak solution to
\begin{align}\label{320}
    \begin{cases}
        \zeta_t-\dv (b\mA(z,\na \zeta))=0&\text{in}\quad Q_{8V\rho}^{\la},\\
        \zeta=u-u_0&\text{on}\quad\pa_p Q_{8V\rho}^{\la}.
    \end{cases}
\end{align}
Since $\na u= \na(u-u_0)$ and $\pa_tu=\pa_t(u-u_0)$, it is clear that $u-u_0$ is a weak solution to 
\[\pa_t(u-u_0)-\dv\mA(z,\na u)=-\dv\mA(z,F)\quad\text{in}\quad Q_{8V\rho}^\la.\]
To derive a suitable energy estimate, we use the following mollification in the time variable since the time derivative of a weak solution for the parabolic system has no function representative in general.
For $f\in L^1( C_R)$ and $0<h<T$, we define the Steklov average $f_h(x,t)$ of $f$ for all $0<t<T$ by
\[f_h(x,t)=
	\begin{cases}
		\fint_t^{t+h}f(x,s)\,ds,&\text{if}\quad 0<t<T-h,\\
		0,&\text{if}\quad T-h\le t.
	\end{cases}\]
For the basic properties of the Steklov average, we refer to~\cite{MR1230384}.

\begin{lemma}\label{lem32}
There exist $\delta=\delta(\data,\epsilon)\in(0,1)$ and $\rho_0=\rho_0(\data_\delta,\|H(z,|F|)\|_{1+\varepsilon_0},\epsilon)\in (0,1)$ such that
    \[\frac{1}{|Q_{\rho}^\lambda|}\iint_{Q_{V\rho}^\la}H(z,|\na u-\na \zeta|)\,dz\le \frac{1}{2^{q}3}\epsilon\la^p.\]
Also, there exists $c=c(\data_\delta)$ such that
\[\la^{p-2}\sup_{t\in I_{8V\rho}^\la}\fint_{B_{8V\rho}}\frac{|\zeta|^2(x,t)}{(8V\rho)^2}\,dx+\fiint_{Q_{8V\rho}^\la}\left(\frac{|\zeta|^p}{(8V\rho)^p} + H(z,|\na \zeta|)\right)\,dz\le c\la^p.\]
\end{lemma}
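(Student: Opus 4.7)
The plan is to carry out a standard comparison argument based on testing the difference equation. Set $w = u - u_0 - \zeta$; since $\zeta = u - u_0$ on $\pa_p Q_{8V\rho}^\la$ by \eqref{320}, $w$ vanishes on the parabolic boundary and solves
\begin{equation*}
    w_t - \dv\bigl(b(z)[\mA(z,\na u) - \mA(z,\na\zeta)]\bigr) = -\dv \mA(z,F) \quad\text{in}\quad Q_{8V\rho}^\la.
\end{equation*}
First I would test this weak equation against its own Steklov average $w_h$, let $h \to 0^+$, and exploit the vanishing initial trace to produce $\tfrac12\sup_t \int |w|^2$ from the time-derivative term.

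On the elliptic side, the monotonicity of the double-phase operator for $p, q \ge 2$ yields
\begin{equation*}
    \bigl(\mA(z,\xi) - \mA(z,\eta)\bigr)\cdot(\xi - \eta) \ge c(p,q)^{-1} H(z,|\xi - \eta|),
\end{equation*}
so the lower bound \eqref{12} provides a coercive term $c^{-1}\iint H(z,|\na w|)$ on the left. Young's inequality on the right splits $|\mA(z,F)\cdot \na w|$ into an absorbable piece and $c\,H(z,|F|)$; after absorption and an application of \eqref{36} in the form $\iint_{Q_{16V\rho}^\la} H(z,|F|) \le \delta \la^p |Q_{16V\rho}^\la|$, one obtains
\begin{equation*}
    \iint_{Q_{8V\rho}^\la} H(z,|\na w|)\, dz \le c\,\delta\,\la^p\,|Q_{8V\rho}^\la|.
\end{equation*}
Dividing by $|Q_\rho^\la|$ introduces the geometric factor $(8V)^{n+2}$, and the comparison inequality is closed by taking $\delta = \delta(\data,\epsilon)$ so small that $\delta(8V)^{n+2} \le \tfrac{1}{c\cdot 2^q\cdot 3}\epsilon$.

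For the energy bound on $\zeta$, decompose $\zeta = (u - u_0) - w$ and apply the triangle inequality. The $u - u_0$ piece is dominated directly by Lemma~\ref{lem31}. The $w$ piece is controlled at the gradient level by the comparison estimate just derived, and at the $L^\infty$-$L^2$ level by the $\tfrac12\sup_t \int |w|^2$ contribution extracted from the same test. Lemma~\ref{lem42} is invoked, after shrinking $\rho_0$ if necessary, to absorb any cross-term of the form $\rho^\alpha \la^q$ coming from the heterogeneity of $a(z)$ into $\epsilon\la^p$, and hence into $\la^p$.

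The main obstacle is keeping the $\delta$-dependence of the various constants transparent. Since the covering constant $V = 9K$ depends on $\delta$ through the factor $\delta^{-1}$ inside the defining integral for $K$, one must verify that the composite $\delta(8V)^{n+2}$ retains a positive power of $\delta$, so that the smallness required to close the comparison bound is genuine rather than circular. This verification is possible precisely because $K$ enters through the exponent $\alpha/(n+2)$, which tempers the growth of $V^{n+2}$ as $\delta \to 0^+$; the stopping-time condition \eqref{37} together with the higher integrability of $H(z,|\na u|)$ from Theorem~\ref{higher} are what make the bookkeeping consistent.
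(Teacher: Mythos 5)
Your overall scheme — test the difference equation for $w = u - u_0 - \zeta$ with (a time-localized) Steklov average, use \eqref{12} and the monotonicity inequality $\bigl(\mA(z,\xi)-\mA(z,\eta)\bigr)\cdot(\xi-\eta)\gtrsim H(z,|\xi-\eta|)$ for $p,q\ge 2$, Young the $F$-term away, invoke \eqref{36}, then divide by $|Q_\rho^\la|$ and close by smallness of $\delta$ — is exactly the paper's route, and the second (energy) estimate also follows as you say, once you add the spatial Poincar\'e inequality to pass from the gradient comparison bound to the $|\zeta|^p/(8V\rho)^p$ term.

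There is, however, a genuine gap in the bookkeeping that you flag as "the main obstacle" but do not actually resolve. After dividing by $|Q_\rho^\la|$ the factor is $K^{n+2}\delta$, and from the definition of $K$ one finds $K^{n+2}\delta \sim \delta^{1-\alpha}\bigl(\iint H(z,|\na u|)\,dz\bigr)^\alpha$ up to harmless additive and multiplicative constants. When $\alpha<1$ this does vanish as $\delta\to 0^+$, as you assert, so $\delta=\delta(\data,\epsilon)$ alone suffices. But in the endpoint case $\alpha=1$, which the standing hypothesis \eqref{15} allows, the exponent is zero and $K^{n+2}\delta$ stays bounded away from zero no matter how small $\delta$ is. So the smallness you claim is \emph{not} achievable by $\delta$ alone; the "tempering" you describe is quantitatively insufficient precisely at $\alpha=1$. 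The paper closes this case by a different device: it applies H\"older's inequality to estimate $\iint_{Q_{2\rho_0}}H(z,|F|)\,dz \le |Q_{2\rho_0}|^{\varepsilon_0/(1+\varepsilon_0)}\bigl(\iint(H(z,|F|))^{1+\varepsilon_0}\bigr)^{1/(1+\varepsilon_0)}$, so the leftover $\delta^{-1}$ contribution inside $K$ is accompanied by a positive power of $\rho_0$, and then takes $\rho_0$ small. This is exactly why $\rho_0$ must depend on $\|H(z,|F|)\|_{1+\varepsilon_0}$ — a dependence you would not generate by your argument as written. Note also that the ingredient being exploited is the higher integrability of $H(z,|F|)$ (the datum), not of $H(z,|\na u|)$ as you state, and that Lemma~\ref{lem42} plays no role in this particular lemma.
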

\begin{proof}
    We take arbitrary $\tau_1,\tau_2\in I_{8V\rho}^{\la}$ such that $\tau_1<\tau_2$ and consider small enough $h>0$, which is chosen to be used for the Steklov average, satisfying $\tau_1,\tau_2\in I_{8V\rho-h}^{\la}$. For small enough $\vartheta>0$, let $\zeta_{\tau_1,\tau_2}^{\vartheta}\in W_0^{1,\infty}(I_{8V\rho-h}^{\la})$ be a cut-off function defined as
    \[\zeta_{\tau_1,\tau_2}^\vartheta(t)=
        \begin{cases}
            \tfrac{1}{\vartheta}(t-(\tau_1-\vartheta))&\text{for}\quad \tau_1-\vartheta\le t\le \tau_1,\\
            \qquad1&\text{for}\quad\tau_1\le t\le \tau_2,\\
            1-\tfrac{1}{\vartheta}(t-\tau_2)&\text{for}\quad\tau_2\le t\le \tau_2+\vartheta,\\
            \qquad0&\text{otherwise.}
        \end{cases}\]
    We take $[u-u_0-\zeta]_h\zeta_{\tau_1,\tau_2}^{\vartheta}$ as a test function to
    \[\pa_t[u-u_0-\zeta]_h-\dv[b(\mA(z,\na u)-\mA(z,\na \zeta))]_h=-\dv[\mA(z,F)]_h\]
    in $B_{8V\rho}\times I_{8V\rho-h}^{\la}.$ Since $\na[u-u_0]_h=[\na (u-u_0)]_h=[\na u]_h=\na [u]_h$, we have
    \begin{align*}
        \begin{split}
           \mathrm{I}+\mathrm{II}
           &= \fiint_{Q_{8V\rho}^\la}\pa_t[u-u_0-\zeta]_h\cdot [u-u_0-\zeta]_h\zeta_{\tau_1,\tau_2}^{\vartheta}\,dz\\
           &\qquad+\fiint_{Q_{8V\rho}^\la} [b(\mA(z,\na u)-\mA(z,\na \zeta))]_h\cdot \na [u-\zeta]_h\zeta_{\tau_1,\tau_2}^{\vartheta}\,dz\\
           &\qquad\qquad=\fiint_{Q_{8V\rho}^\la}[\mA(z,F)]\cdot \na [u-\zeta]_h \zeta_{\tau_1,\tau_2}^\vartheta\,dz= \mathrm{III}.
        \end{split}
    \end{align*}
   We estimate each term in the above display. Applying the integration by parts, there holds
   \begin{align*}
       \begin{split}
           &\mathrm{I}
           =\fiint_{Q_{8V\rho}^\la}\left(\pa_t\frac{1}{2}|[u-u_0-\zeta]_h|^2\right)\zeta_{\tau_1,\tau_2}^\vartheta\,dz\\
           &\qquad=\fiint_{Q_{8V\rho}^\la}-\frac{1}{2}|[u-u_0-\zeta]_h|^2\pa_t\zeta_{\tau_1,\tau_2}^\vartheta\,dz\\
           &\qquad\qquad=\frac{-1}{2|I_{8V\rho}^\la|}\fint_{\tau_1-\vartheta}^{\tau_1}\fint_{B_{8V\rho}}|[u-u_0-\zeta]_h|^2\,dz\\
           &\qquad\qquad\qquad+\frac{1}{2|I_{8V\rho}^{\la}|}\fint_{\tau_2}^{\tau_2+\vartheta}\fint_{B_{8V\rho}}|[u-u_0-\zeta]_h|^2\,dz.
       \end{split}
   \end{align*}
    Therefore, using the convergence property of the Steklov average and then letting $\vartheta$ go to $0^+$, we obtain
    \begin{align*}
    \begin{split}
        \lim_{\vartheta\to0^+}\lim_{h\to0^+}\mathrm{I}
        &=-\frac{1}{2|I_{8V\rho}^\la|}\fint_{B_{8V\rho}}|u-u_0-\zeta|(x,\tau_1)\,dx\\
        &\qquad+\frac{1}{2|I_{8V\rho}^{\la}|}\fint_{B_{8V\rho}}|u-u_0-\zeta|^2(x,\tau_2)\,dx.
    \end{split}
    \end{align*}
Again it follows from the convergence property of the Steklov average that 
\[\lim_{h\to0^+}\mathrm{II}
        =\fiint_{Q_{8V\rho}^\la} b(\mA(z,\na u)-\mA(z,\na \zeta))\cdot \na (u-\zeta)\zeta_{\tau_1,\tau_2}^{\vartheta}\,dz.\]
Using \eqref{12} and \cite[Chapter 1, Lemma 4.4]{MR1230384}, there exists $c=c(n,N,p,q,\nu)$ such that
\[\lim_{\vartheta\to0^+}\lim_{h\to0^+}\mathrm{II}
        \geq c\fiint_{Q_{8V\rho}^\la}H(z,|\na u-\na \zeta|)\chi_{\{\tau_1\le t\le \tau_2\}}\,dz.\]
    While Young's inequality gives
    \begin{align*}
    \begin{split}
        \lim_{\vartheta\to0^+}\lim_{h\to0^+}\mathrm{III}
            &\le c\fiint_{Q_{8V\rho}^\la}H(z,|F|)\chi_{\{\tau_1\le t\le \tau_2\}}\,dz\\
            &\qquad+\frac{c}{2}\fiint_{Q_{8V\rho}^\la}H(z,|\na u-\na \zeta|)\chi_{\{\tau_1\le t\le \tau_2\}}\,dz.
    \end{split}
    \end{align*}
Combining estimates of $\mathrm{I}$, $\mathrm{II}$ and $\mathrm{III}$, we get
\begin{align*}
    \begin{split}
        &\frac{1}{|I_{8V\rho}^{\la}|}\fint_{B_{8V\rho}}|u-u_0-\zeta|^2(x,\tau_2)\,dx+\fiint_{Q_{8V\rho}^\la}H(z,|\na u-\na \zeta|)\chi_{\{\tau_1\le t\le \tau_2\}}\,dz\\
        &\qquad\le \frac{c}{|I_{8V\rho}^\la|}\fint_{B_{8V\rho}}|u-u_0-\zeta|^2(x,\tau_1)\,dx+c\fiint_{Q_{8V\rho}^\la}H(z,|F|)\chi_{\{\tau_1\le t\le \tau_2\}}\,dz.
    \end{split}
\end{align*}
Since $\tau_1$ and $\tau_2$ are arbitrary, $u,\zeta\in C(I_{8V\rho}^\la;L^2(B_{8V\rho},\RR^N))$ and $u-u_0\equiv \zeta$ in $B_{4\rho}\times \{t=-\la^{2-p}(8V\rho)^2\}$, letting $\tau_1$ to $-\la^{2-p}(8V\rho)^2$ and $\tau_2$ to $\la^{2-p}(8V\rho)^2$ to have
\[\fiint_{Q_{8V\rho}^\la}H(z,|\na u-\na \zeta|)\,dz\le c\fiint_{Q_{8V\rho}^\la}H(z,|F|)\,dz.\]
Meanwhile, letting $\tau_1$ to $-\la^{2-p}(8V\rho)^2$ and keeping $\tau_2$ be arbitrary, we also get
\[\frac{1}{|I_{8V\rho}^\la|}\fint_{B_{8V\rho}}|u-u_0-\zeta|^2(x,\tau_2)\,dx\le c\fiint_{Q_{8V\rho}^\la}H(z,|F|)\,dz.\]
Therefore, combining these estimates and using \eqref{36}, we obtain
\begin{align}\label{334}
\begin{split}
    &\sup_{t\in I_{8V\rho}^\la}\frac{1}{|I_{8V\rho}^\la|}\fint_{B_{8V\rho}}|u-u_0-\zeta|^2(x,t)\,dx+\fiint_{Q_{8V\rho}^\la}H(z,|\na u-\na \zeta|)\,dz\\
    &\qquad\le c\fiint_{Q_{8V\rho}^\la}H(z,|F|)\,dz\le c\delta\la^p,
\end{split}
\end{align}
where $c=c(n,N,p,q,\nu,L)$.
    In particular, since $\delta\in(0,1)$, we have
    \[\la^{p-2}\sup_{t\in I_{8V\rho}^\la}\fint_{B_{8V\rho}}\frac{|u-u_0-\zeta|^2(x,t)}{(8V\rho)^2}\,dx+\fiint_{Q_{8V\rho}^\la}H(z,|\na u-\na \zeta|)\,dz\le c\la^p.\]
    Applying the triangle inequality and Lemma~\ref{lem31}, we get
    \[\la^{p-2}\sup_{t\in I_{8V\rho}^\la}\fint_{B_{8V\rho}}\frac{|\zeta|^2(x,t)}{(8V\rho)^2}\,dx+\fiint_{Q_{8V\rho}^\la}H(z,|\na \zeta|)\,dz\le c(\data_\delta)\la^p.\]
    Employing the Poincaré inequality in the spatial direction and Lemma~\ref{lem31}, we also have
    \begin{align*}
    \begin{split}
          &\fiint_{Q_{8V\rho}^\la}\frac{|\zeta|^p}{(8V\rho)^p}\,dz
          \le 2^p\fiint_{Q_{8V\rho}^\la}\frac{|\zeta-(u-u_0)|^p}{(8V\rho)^p}\,dz+2^p\fiint_{Q_{8V\rho}^\la}\frac{|u-u_0|^p}{(8V\rho)^p}\,dz\\
          &\qquad\le c(\data_\delta)\left(\fiint_{Q_{8V\rho}^\la}|\na \zeta-\na u|^p\,dz+\la^p\right)\le c(\data_\delta)\la^p.
    \end{split}
    \end{align*}
    The proof of the second estimate is completed. To obtain the first estimate, we further estimate \eqref{334}. With $V=9K$, we write \eqref{334} as
    \[\frac{1}{|Q_{\rho}^\la|}\iint_{Q_{V\rho}^\la}H(z,|\na u-\na \zeta|)\,dz\le cK^{n+2}\delta\la^p.\]
    Recalling $\tfrac{1}{180(1+[a]_\alpha)}K\delta^\frac{1}{n+2}$ is equivalent to
    \[\left(\frac{\delta^\frac{1}{\alpha}}{|B_1|}\iint_{Q_{2\rho_0}(z_0)}H(z,|\na u|)\,dz+\delta^\frac{1}{\alpha}+\delta^{\frac{1-\alpha}{\alpha}}\iint_{Q_{2\rho_0}(z_0)}H(z,|F|)\,dz\right)^\frac{\alpha}{n+2},\]
    we observe
    \begin{align*}
    \begin{split}
        &\frac{1}{180(1+[a]_\alpha)}K\delta^\frac{1}{n+2}\\
        &\qquad\le\left(\frac{\delta^\frac{1}{\alpha}}{|B_1|}\iint_{ C_{R}}H(z,|\na u|)\,dz+\delta^\frac{1}{\alpha}+\delta^{\frac{1-\alpha}{\alpha}}\iint_{ C_R}H(z,|F|)\,dz\right)^\frac{\alpha}{n+2}.
    \end{split}
    \end{align*}
    If $\alpha\in(0,1)$ holds, then we have $cK^{n+2}\delta\le \tfrac{1}{2^q3}\epsilon$ provided  $\delta=\delta(\data,\epsilon)$ is sufficiently small. On the other hand, if $\alpha=1$ holds, then a further estimate is necessary since we have $\delta^{\frac{1-\alpha}{\alpha}}=1$.
    Since H\"older's inequality gives
    \[\iint_{Q_{2\rho_0}(z_0)}H(z,|F|)\,dz
        \le |Q_{2\rho_0}|^\frac{\varepsilon_0}{1+\varepsilon_0}\left(\iint_{Q_{2\rho_0}(z_0)}(H(z,|F|))^{1+\varepsilon_0}\,dz\right)^\frac{1}{1+\varepsilon_0},\]
    it follows
    \begin{align*}
    \begin{split}
        \frac{1}{180(1+[a]_{\alpha})}K\delta^\frac{1}{n+2}
        &\le \left(\frac{\delta^\frac{1}{\alpha}}{|B_1|}\iint_{ C_{R}}H(z,|\na u|)\,dz+\delta^\frac{1}{\alpha}\right.\\
        &\qquad\left.+|Q_{2\rho_0}|^\frac{\varepsilon_0}{1+\varepsilon_0}\left(\iint_{ C_R}(H(z,|F|))^{1+\varepsilon_0}\,dz\right)^\frac{1}{1+\varepsilon_0}\right)^\frac{\alpha}{n+2}.
    \end{split}
    \end{align*}
    Hence by taking sufficiently small $\delta=\delta(\data,\epsilon)$ and $\rho_0=\rho_0(\data,\|H(z,|F|)\|_{1+\varepsilon_0},\epsilon)$, the desired estimate follows.
\end{proof}

The next lemma provides the quantitative estimate of the higher integrability for $|\na \zeta|$ under the setting of the intrinsic cylinder. The constant $\varepsilon_\delta$ in the next lemma depends on $\data_\delta$ and it may be different from $\varepsilon_0(\data)$ in Theorem~\ref{higher}.
\begin{lemma}\label{lem33}
    There exists $\varepsilon_\delta=\varepsilon_\delta(\data_\delta)\in(0,1)$ and $c=c(\data_\delta)$ such that
    \[\fiint_{Q_{4V\rho}^\la}(H(z,|\na \zeta|))^{1+\varepsilon_\delta}\,dz\le c\la^{p(1+\varepsilon_\delta)}.\]
\end{lemma}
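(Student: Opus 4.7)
The plan is to adapt the higher-integrability machinery used in the proof of Theorem~\ref{higher} to the function $\zeta$, which satisfies the homogeneous parabolic double-phase system \eqref{320} on $Q_{8V\rho}^\la$. Because $\zeta$ carries no forcing term and already enjoys the a priori energy bound $\fiint_{Q_{8V\rho}^\la}H(z,|\na\zeta|)\,dz\le c(\data_\delta)\la^p$ from Lemma~\ref{lem32}, the right-hand side of a Gehring-type improvement will automatically be of the correct order $\la^{p(1+\varepsilon_\delta)}$, modulo bookkeeping of the exponent.

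The first concrete step is a Caccioppoli estimate for $\zeta$ on a generic intrinsic sub-cylinder $Q_r^\la(z_1)\subset Q_{8V\rho}^\la$, which in the absence of forcing takes the clean form
\[\iint_{Q_r^\la(z_1)}H(z,|\na\zeta|)\,dz\le c\iint_{Q_{2r}^\la(z_1)}\left(\frac{|\zeta-(\zeta)_{Q_{2r}^\la(z_1)}|^p}{r^p}+a(z)\frac{|\zeta-(\zeta)_{Q_{2r}^\la(z_1)}|^q}{r^q}\right)\,dz.\]
Combined with the parabolic Sobolev--Poincar\'e inequality in intrinsic cylinders, this should produce a reverse H\"older inequality for $H(z,|\na\zeta|)$. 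The decisive input is the $p$-intrinsic condition $K^2\la^p\ge a(0)\la^q$, which together with the $\alpha$-H\"older continuity of $a$ and Lemma~\ref{lem42} (ensuring $\rho^\alpha\la^q$ is small compared with $\la^p$) allows one to replace $a(z)$ by a constant of size at most $K^2\la^{p-q}$ on $Q_{8V\rho}^\la$ up to an absorbable error. This effectively reduces the $q$-phase to a tame, bounded perturbation of the $p$-phase, with all constants tracked through $\data_\delta=(\data,K)$.

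A standard Gehring self-improving argument then yields $\varepsilon_\delta=\varepsilon_\delta(\data_\delta)\in(0,1)$ and $c=c(\data_\delta)$ so that, passing from $Q_{8V\rho}^\la$ to $Q_{4V\rho}^\la$,
\[\fiint_{Q_{4V\rho}^\la}(H(z,|\na\zeta|))^{1+\varepsilon_\delta}\,dz\le c\left(\fiint_{Q_{8V\rho}^\la}H(z,|\na\zeta|)\,dz\right)^{1+\tfrac{q\varepsilon_\delta}{2}}+c,\]
in the spirit of Theorem~\ref{higher} with $F\equiv 0$. Inserting the energy bound from Lemma~\ref{lem32} and using $\la\ge 1$ delivers the desired bound $c\la^{p(1+\varepsilon_\delta)}$, after redefining $\varepsilon_\delta$ if necessary to absorb the factor $q/2$.

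The principal obstacle is making the reverse H\"older step quantitative in the intrinsic geometry: one must verify that the H\"older oscillation of $a$ is small enough on the spatial scale $\rho$ (and on the stretched time scale $\la^{2-p}\rho^2$) to let the $q$-phase be treated as a frozen, bounded perturbation of the $p$-phase while preserving the natural scaling of the Caccioppoli and Sobolev--Poincar\'e estimates. This is precisely what Lemma~\ref{lem42} achieves, and it is also the reason why the resulting exponent $\varepsilon_\delta$ must depend on $K$ and may differ from the $\varepsilon_0(\data)$ provided by Theorem~\ref{higher}.
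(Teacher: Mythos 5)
The paper does not reprove a reverse H\"older inequality for $\zeta$; it rescales $\zeta_\la(x,t)=\tfrac{1}{\rho\la}\zeta(\rho x,\la^{2-p}\rho^2t)$ to the fixed-size cylinder $Q_{8V}$, checks via the change of variables that $\zeta_\la$ solves a homogeneous double-phase system there with $b_\la$ still elliptic, $[a_\la]_{\alpha}\le[a]_{\alpha}$ (this uses Lemma~\ref{lem42}), and $\|a_\la\|_{\infty}\le K^2+[a]_{\alpha}(8V)^\alpha$ (this uses the $p$-intrinsic condition $a_0\la^q\le K^2\la^p$), and then applies Theorem~\ref{higher} as a black box. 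Because the rescaled energy $\fiint_{Q_{8V}}H_\la(z,|\na\zeta_\la|)\,dz$ is bounded by a pure constant $c(\data_\delta)$ (Lemma~\ref{lem32} after the change of variables), the $(q\varepsilon_\delta/2+1)$-power on the right-hand side of Theorem~\ref{higher} acts on a bounded quantity, the conclusion $\fiint_{Q_{4V}}(H_\la(z,|\na\zeta_\la|))^{1+\varepsilon_\delta}\,dz\le c(\data_\delta)$ follows with no exponent bookkeeping, and scaling back gives exactly $c\la^{p(1+\varepsilon_\delta)}$.

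There is a genuine gap in your final step. Your inequality, stated directly on the physical intrinsic cylinder, reads
\[
\fiint_{Q_{4V\rho}^\la}(H(z,|\na\zeta|))^{1+\varepsilon_\delta}\,dz\le c\left(\fiint_{Q_{8V\rho}^\la}H(z,|\na\zeta|)\,dz\right)^{1+\tfrac{q\varepsilon_\delta}{2}}+c .
\]
Inserting the Lemma~\ref{lem32} bound $\fiint_{Q_{8V\rho}^\la}H(z,|\na\zeta|)\,dz\le c\la^p$ yields $c\la^{p(1+q\varepsilon_\delta/2)}+c$. Since $q>2$ and $\la>1$, the exponent $p(1+q\varepsilon_\delta/2)$ strictly exceeds the target $p(1+\varepsilon_\delta)$, so the bound goes the \emph{wrong} way; you have proved something weaker than the lemma, not stronger. ``Redefining $\varepsilon_\delta$ to absorb $q/2$'' cannot repair this, because shrinking $\varepsilon_\delta$ to tame the right-hand side simultaneously weakens the left-hand side: the two exponents move together in the reverse H\"older inequality. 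The entire point of the paper's scaling is to eliminate this mismatch by turning the energy into a $\la$-independent constant before the supercritical power is applied.

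Two secondary issues. First, a reverse H\"older inequality for $H(z,|\na\zeta|)$ in intrinsic cylinders is not a ``standard Gehring'' matter; one needs the stopping-time and Vitali-covering apparatus of \cite{KKM} in two intrinsic regimes, which you treat as routine but is precisely the content of Theorem~\ref{higher}. Second, the idea of ``freezing $a$'' and reducing the $q$-phase to a bounded $p$-phase perturbation is not what the paper does and is not needed: after rescaling, $a_\la$ is a genuine $C^{\alpha,\alpha/2}$ coefficient, merely with controlled H\"older seminorm and sup-norm, and the full double-phase Theorem~\ref{higher} is applied; no frozen-coefficient or perturbation argument is invoked.
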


\begin{proof}
  For $(x,t)\in Q_{8V}$ and $\xi\in \RR^{Nn}$, we set the scaled functions and maps
  \begin{align*}
  \begin{split}
      &\zeta_{\la}(x,t)=\tfrac{1}{\rho\la}\zeta(\rho x,\la^{2-p}\rho^2t),\\
      &b_\la(x,t)=b(\rho x,\la^{2-p}\rho^2t),\\
      &a_\la(x,t)=\la^{q-p}a(\rho x,\la^{2-p}\rho^2t),\\ 
      &\mA_\la(z,\xi)=|\xi|^{p-2}\xi+a_\la(z)|\xi|^{q-2}\xi,\\
      &H_\la(z,s)=s^p+a_\la(z)s^q.
  \end{split}
    \end{align*}
         Note that $b_\la$ still satisfies the ellipticity condition \eqref{12} in $Q_{8V}$. Moreover, it follows from Lemma~\ref{lem42} that $a_\la\in C^{\alpha,\alpha/2}(Q_{8V})$ with
\begin{align}\label{344}
    [a_{\la}]_{\alpha,\alpha/2;Q_{8V}}= \rho^\alpha \la^{q-p} [a]_{\alpha} \le [a]_{\alpha}.
\end{align}
We claim that $\zeta_\la$ is a weak solution to the type of \eqref{320}.
For any $\varphi_{\la}\in C_0^\infty(Q_{8V},\RR^N)$, there exists $\varphi\in C_0^\infty(Q_{8V\rho}^\la,\RR^N)$ such that $\varphi_{\la}(x,t)=\varphi(\rho x,\la^{2-p}\rho^2t)$ in $Q_{8V}$.
Then we observe that the change of variables gives
\begin{align*}
    \begin{split}
        &-\fiint_{Q_{8V}}\zeta_{\la}(x,t)\cdot \pa_t\varphi_{\la}(x,t)\,dz\\
        &\qquad= -\fiint_{Q_{8V}}\la^{1-p}\rho \zeta(\rho x,\la^{2-p}\rho^2t)\cdot\pa_t\varphi(\rho x,\la^{2-p}\rho^2t)\,dz\\
        &\qquad\qquad= -\fiint_{Q_{8V\rho}^\la}\la^{1-p}\rho \zeta(x,t)\cdot\pa_t\varphi(x,t)\,dz.
    \end{split}
\end{align*}
Recalling $\zeta$ is the weak solution to \eqref{320}, there holds
\begin{align*}
\begin{split}
    &-\fiint_{Q_{8V}}\zeta_{\la}(x,t)\cdot\pa_t\varphi_{\la}(x,t)\,dz\\
    &\qquad=-\fiint_{Q_{8V\rho}^\la}\la^{1-p}\rho b(z)(|\na \zeta(z)|^{p-2}+a(z)|\na \zeta(z)|^{q-2})\na \zeta(z)\cdot \na \varphi(z)\,dz.
\end{split}
\end{align*}
Again applying the change of variables to the last term, we get
\begin{align*}
\begin{split}
    &\fiint_{Q_{8V\rho}^\la}\la^{1-p}\rho b(z)|\na \zeta(z)|^{p-2}\na \zeta(z)\cdot \na \varphi(z)\,dz\\
    &\qquad=\fiint_{Q_{8V}} b(\rho x,\la^{2-p}\rho^2t)\tfrac{|\na \zeta(\rho x,\la^{2-p}\rho^2t)|^{p-2}}{\la^{p-2}}\tfrac{\na \zeta(\rho x,\la^{2-p}\rho^2t)}{\la}\cdot \rho\na \varphi(\rho x,\la^{2-p}\rho^2t)\,dz\\
    &\qquad\qquad=\fiint_{Q_{8V}} b_\la( x,t)|\na \zeta_\la(x,t)|^{p-2}\na \zeta_\la(x,t)\cdot \na \varphi_\la(x,t)\,dz
\end{split}
\end{align*}
and similarly
\begin{align*}
    \begin{split}
        &\fiint_{Q_{8V\rho}^\la}\la^{1-p}\rho b(z)a(z)|\na \zeta(z)|^{q-2}\na \zeta(z)\cdot \na \varphi(z)\,dz\\
        &\qquad=\fiint_{Q_{8V}} b_\la( x,t)a_\la( x,t)|\na \zeta_\la(x,t)|^{q-2}\na \zeta_\la(x,t)\cdot \na \varphi_\la(x,t)\,dz.
    \end{split}
\end{align*}
Therefore, it follows
\[-\fiint_{Q_{8V}}\zeta_{\la}\cdot\pa_t\varphi_{\la}\,dz
        =-\fiint_{Q_{8V}} b_\la(|\na \zeta_\la|^{p-2}+a_{\la}|\na \zeta_{\la}|^{q-2})\na \zeta_{\la}\cdot \na \varphi_{\la}\,dz,\]
or equivalently, $\zeta_\la$ is a weak solution to
\[\pa_t\zeta_\la-\dv(b_\la\mA_{\la}(z,\na \zeta_\la))=0\quad \text{in}\quad Q_{8V}.\]
    Employing Theorem~\ref{higher}, there holds
    \[\fiint_{Q_{4V}}(H_\la(z,|\na \zeta_\la|))^{1+\varepsilon_\delta}\,dz\le c\left(\fiint_{Q_{8V}}H_\la(z,|\na \zeta_\la|)\,dz+1\right)^{\frac{q\varepsilon_\delta}{2}+1},\]
    where $\varepsilon_\delta$ depending on 
    \[n,p,q,\alpha,\nu,L,[a_\la]_{\alpha},8V,\|\zeta_\la\|_{L^\infty(I_{8V};L^2(B_{8V}))},\|H_\la(z,|\na \zeta_\la|)\|_{1}\]
    and $c$ depending on
    \[n,p,q,\alpha,\nu,L,[a_\la]_{\alpha},8V,\|\zeta_\la\|_{L^\infty(I_{8V};L^2(B_{8V}))},\|H_\la(z,|\na \zeta_\la|)\|_{1},\|a_\la\|_{\infty}.\]
    We now investigate the constant dependency of $\varepsilon_\delta$ and $c$ in the above.
    Note that change of variables implies
    \[\fiint_{Q_{8V}}H_\la(z,|\na \zeta_\la|)\,dz
           =\frac{1}{\la^p}\fiint_{Q_{8V\rho}^\la}H(z,|\na \zeta|)\,dz.\]
   Therefore we get from Lemma~\ref{lem32} that
\begin{align*}
\begin{split}
    &\sup_{t\in I_{8V}}\fint_{B_{8V}}|\zeta_\la|^2\,dx+\fiint_{Q_{8V}}H_\la(z,|\na \zeta_\la|)\,dz\\
    &\qquad=\sup_{t\in I_{8V\rho}^\la}\fint_{B_{8V\rho}}\frac{|\zeta|^2}{\la^2\rho^2}\,dx+\frac{1}{\la^p}\fiint_{Q_{8V\rho}^{\la}}H(z,|\na \zeta|)\,dz\le c(\data_\delta).
\end{split}    
\end{align*}
Meanwhile, \eqref{344} and $a_0\la^q\le K^2\la^p$ gives
\begin{align}\label{352}
    \|a_\la\|_{\infty}\le \la^{q-p}a_0+[a_\la]_{\alpha}(8V)^\alpha\le K^2+[a]_{\alpha}(8V)^\alpha\le c(\data_\delta).
\end{align}
Therefore we conclude $\varepsilon_\delta=\varepsilon_\delta(\data_\delta)$ and
\[\fiint_{Q_{4V}}(H_\la(z,|\na \zeta_\la|))^{1+\varepsilon_\delta}\,dz\le c(\data_\delta).\]
    Finally, it comes from the change of variable that
    \[\fiint_{Q_{4V\rho}^\la}(H(z,|\na \zeta|))^{1+\varepsilon_\delta}\,dz\le c(\data_\delta)\la^{p(1+\varepsilon_\delta)}.\]
This completes the proof.
\end{proof}

We next consider the weak solution $\eta\in C(I_{4V\rho}^\la;L^2(B_{4V\rho},\RR^N))\cap L^q(I_{4V\rho}^\la;W^{1,q}(B_{4V\rho},\RR^N))$ to
\[\begin{cases}
        \eta_t-\dv(b_0\mA(z,\na \eta))=0&\text{in}\quad Q_{4V\rho}^\la,\\
        \eta=\zeta&\text{on}\quad\pa_p Q_{4V\rho}^\la,
    \end{cases}\]
where recall $b_0=b_{Q_{4V\rho}^\la}$.
We have the following comparison estimate.
\begin{lemma}\label{lem34}
    There exists $\rho_0=\rho_0(\data_\delta,\epsilon)\in(0,1)$ such that
    \[\frac{1}{|Q_{\rho}^\lambda|}\iint_{Q_{V\rho}^\la}H(z,|\na \zeta-\na \eta|)\,dz\le \frac{1}{2^{2q}3}\epsilon\la^p.\]
    Also, there exists $c=c(\data_\delta)$ such that
    \[\la^{p-2}\sup_{t\in I_{4V\rho}^\la}\fint_{B_{4V\rho}}\frac{|\eta|^2(x,t)}{(4V\rho)^2}\,dx+\fiint_{Q_{4V\rho}^\la}\left(\frac{|\eta|^p}{(4V\rho)^p}+H(z,|\na \eta|)\right)\,dz\le c\la^p.\]
\end{lemma}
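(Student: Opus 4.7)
The plan is to test the equation satisfied by $\zeta-\eta$ against a Steklov-mollified version of itself (with the same space-time cut-off scheme as in Lemma~\ref{lem32}), and then to close the resulting inequality by combining the VMO smallness of $b$ with the higher integrability from Lemma~\ref{lem33}. Subtracting the defining PDEs of $\zeta$ and $\eta$ gives, in $Q_{4V\rho}^\la$,
\[
\pa_t(\zeta-\eta) - \dv\bigl(b_0(\mA(z,\na\zeta) - \mA(z,\na\eta))\bigr) = \dv\bigl((b(z) - b_0)\mA(z,\na\zeta)\bigr),
\]
with $\zeta-\eta = 0$ on $\pa_p Q_{4V\rho}^\la$. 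Using the monotonicity inequality \cite[Chapter 1, Lemma 4.4]{MR1230384} for the principal part, the ellipticity \eqref{12} for $b_0$, and the vanishing lateral-and-bottom trace, the same Steklov-plus-cut-off passage as in Lemma~\ref{lem32} yields
\[
\sup_{t\in I_{4V\rho}^\la}\fint_{B_{4V\rho}}|\zeta-\eta|^2(x,t)\,dx + \iint_{Q_{4V\rho}^\la} H(z,|\na\zeta-\na\eta|)\,dz \le c\iint_{Q_{4V\rho}^\la}|b-b_0|\,|\mA(z,\na\zeta)|\,|\na\zeta-\na\eta|\,dz.
\]

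To control the right-hand side, I apply Young's inequality with a small parameter and the bound $|\mA(z,\xi)|\le|\xi|^{p-1}+a(z)|\xi|^{q-1}$ to absorb a fraction of $\iint H(z,|\na\zeta-\na\eta|)\,dz$ into the left-hand side; then the elementary bound $|b-b_0|\le 2L$ lets me collapse the Young conjugate powers $p'$ and $q'$ into a single linear factor, leaving
\[
c(L)\iint_{Q_{4V\rho}^\la}|b(z)-b_0|\,H(z,|\na\zeta|)\,dz.
\]
H\"older's inequality with exponent pair $\bigl(\tfrac{1+\varepsilon_\delta}{\varepsilon_\delta},\,1+\varepsilon_\delta\bigr)$ combined with Lemma~\ref{lem33} bounds this by
\[
c(\data_\delta)\la^p |Q_{4V\rho}^\la|\left(\fiint_{Q_{4V\rho}^\la}|b-b_0|^{(1+\varepsilon_\delta)/\varepsilon_\delta}\,dz\right)^{\!\varepsilon_\delta/(1+\varepsilon_\delta)}.
\]
Since $\la\ge 1$ and $p\ge 2$, $I_{4V\rho}^\la$ has length at most $2(4V\rho)^2$, so the VMO condition \eqref{18} together with $|b-b_0|\le 2L$ drives the bracket to $0$ as $\rho\to 0^+$. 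Dividing through by $|Q_\rho^\la|$ and absorbing $|Q_{4V\rho}^\la|/|Q_\rho^\la|=(4V)^{n+2}$ into the constant, a choice $\rho_0=\rho_0(\data_\delta,\epsilon)$ sufficiently small delivers the first inequality of the lemma.

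The second inequality is routine: the triangle inequalities $|\eta|\le|\zeta|+|\zeta-\eta|$ and $|\na\eta|\le|\na\zeta|+|\na\zeta-\na\eta|$ reduce the sup-in-time $L^2$ bound and the $\iint H(z,|\na\eta|)\,dz$ bound to the $\zeta$-estimates of Lemma~\ref{lem32} and to the first estimate just proved (where $\epsilon<1$ is used); for the $\fiint|\eta|^p/(4V\rho)^p$ term, the spatial Poincar\'e inequality applied to $\eta-\zeta$ (vanishing on $\pa B_{4V\rho}$) together with the $|\zeta|^p$ bound of Lemma~\ref{lem32} finishes the estimate. The main obstacle is the Young/H\"older balancing in the coefficient-defect integral: one must simultaneously absorb the left-hand-type factor $|\na\zeta-\na\eta|$ and retain exactly a linear power of $|b-b_0|$ on the right, which is precisely why the $L^\infty$ bound on $b$ is needed to collapse the Young conjugates $p'$ and $q'$, and why the higher integrability exponent $\varepsilon_\delta=\varepsilon_\delta(\data_\delta)$ of Lemma~\ref{lem33} is indispensable to play H\"older against the VMO oscillation of $b$.
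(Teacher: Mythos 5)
Your proof follows the paper's argument step for step: subtract the PDEs, test with a Steklov-mollified cutoff version of $\zeta-\eta$ to get the energy estimate with the coefficient-defect term on the right, apply Young's inequality to absorb the gradient-difference contribution, then H\"older with exponents $\bigl(\tfrac{1+\varepsilon_\delta}{\varepsilon_\delta},\,1+\varepsilon_\delta\bigr)$ against Lemma~\ref{lem33}, use $|b-b_0|\le 2L$ to reduce the high power of $|b-b_0|$ to a linear one, invoke the VMO condition to choose $\rho_0$, and finish the second inequality by triangle inequality, Poincar\'e, and Lemma~\ref{lem32}. This is essentially the same proof; the only minor imprecision is the description of ``collapsing the Young conjugates $p'$ and $q'$'', since the $L^\infty$ bound on $b$ is actually used to absorb the $|b-b_0|\,H(z,|\nabla\zeta-\nabla\eta|)$ term and later to linearize $|b-b_0|^{(1+\varepsilon_\delta)/\varepsilon_\delta}$, not to merge Young exponents.
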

\begin{proof}
 Note that $b_0$ still satisfies \eqref{12}. We follow the same argument in the proof of Lemma~\ref{lem32}. For sufficiently small $h,\vartheta>0$ and $\tau_1,\tau_2\in I_{4V\rho-h}^\la$, we take $[\zeta-\eta]_h\zeta_{\tau_1,\tau_2}^{\vartheta}$ as a test function to
    \[\pa_t[\zeta-\eta]_h-\dv[b_0(\mA(z,\na \zeta)-\mA(z,\na \eta))]_h=-\dv[(b_0-b)\mA(z,\na \zeta)]_h\]
    in $B_{4V\rho}\times I_{4V\rho-h}^\la.$ Then there exists a constant $c=c(n,N,p,q,\nu,L)$ such that
    \begin{align}\label{358}
    \begin{split}
        &\la^{p-2}\sup_{t\in I_{4V\rho}^\la}\fint_{B_{4V\rho}}\frac{|\zeta-\eta|^2(x,t)}{(4V\rho)^2}\,dx+\fiint_{Q_{4V\rho}^\la}H(z,|\na \zeta-\na \eta|)\,dz\\
        &\qquad\le c\fiint_{Q_{4V\rho}^\la}|b_0-b(z)||\mA(z,\na \zeta)||\na \zeta-\na \eta|\,dz.
    \end{split}
    \end{align}
To estimate the last term, we apply Young's inequality to have
\begin{align*}
    \begin{split}
        &c\fiint_{Q_{4V\rho}^\la}|b_0-b(z)||\mA(z,\na \zeta)||\na \zeta-\na \eta|\,dz\\
        &\qquad\le c\fiint_{Q_{4V\rho}^\la}|b_0-b(z)||H(z,\na \zeta)|\,dz+\frac{1}{4L}\fiint_{Q_{4V\rho}^\la}|b_0-b(z)|H(z,|\na \zeta-\na \eta|)\,dz\\
        &\qquad\qquad\le c\fiint_{Q_{4V\rho}^\la}|b_0-b(z)||H(z,\na \zeta)|\,dz+\frac{1}{2}\fiint_{Q_{4V\rho}^\la}H(z,|\na \zeta-\na \eta|)\,dz.
    \end{split}
\end{align*}
Therefore, absorbing the last term to the left-hand side of \eqref{358}, it becomes
\begin{align*}
    \begin{split}
        &\la^{p-2}\sup_{t\in I_{4V\rho}^\la}\fint_{B_{4V\rho}}\frac{|\zeta-\eta|^2(x,t)}{(4V\rho)^2}\,dx+\fiint_{Q_{4V\rho}^\la}H(z,|\na \zeta-\na \eta|)\,dz\\
        &\qquad\le c\fiint_{Q_{4V\rho}^\la}|b_0-b(z)||H(z,\na \zeta)|\,dz.
    \end{split}
\end{align*}
We continue to estimate the last term. Applying Hölder's inequality and Lemma~\ref{lem33}, we get
\begin{align*}
    \begin{split}
        &\fiint_{Q_{4V\rho}^\la}|b_0-b(z)||H(z,\na \zeta)|\,dz\\
        &\qquad\le \left(\fiint_{Q_{4V\rho}^\la}|b_0-b(z)|^\frac{1+\varepsilon_\delta}{\varepsilon_\delta}\,dz\right)^\frac{\varepsilon_\delta}{1+\varepsilon_\delta}\left(\fiint_{Q_{4V\rho}^\la}(H(z,\na \zeta))^{1+\varepsilon_\delta}\,dz\right)^\frac{1}{1+\varepsilon_\delta}\\
        &\qquad\qquad\le c(\data_\delta)\left(\fiint_{Q_{4V\rho}^\la}|b_0-b(z)|^{\frac{1+\varepsilon_\delta}{\varepsilon_\delta}}\,dz\right)^\frac{\varepsilon_\delta}{1+\varepsilon_\delta}\la^{p}
    \end{split}
\end{align*}
Recalling \eqref{12}, we have
\begin{align*}
\begin{split}
    &\fiint_{Q_{4V\rho}^\la}|b_0-b(z)|^{\frac{1+\varepsilon_\delta}{\varepsilon_\delta}}\,dz
    \le \fiint_{Q_{4V\rho}^\la}|b_0-b(z)|(|b_0|+|b(z)|)^\frac{1}{\varepsilon_\delta}\,dz\\
    &\qquad\le (2L)^\frac{1}{\varepsilon_\delta}\fiint_{Q_{4V\rho}^\la}|b_0-b(z)|\,dz.
\end{split}
\end{align*}
We apply the local VMO condition in \eqref{115}. Taking $\rho_0=\rho_0(\data_\delta)$ small enough, there holds
\begin{align*}
    \begin{split}
        &\la^{p-2}\sup_{t\in I_{4V\rho}^\la}\fint_{B_{4V\rho}}\frac{|\zeta-\eta|^2(x,t)}{(4V\rho)^2}\,dx+ \fiint_{Q_{4V\rho}^\la}H(z,|\na \zeta-\na \eta|)\,dz\\
        &\qquad\le \frac{1}{(4V)^{n+2}2^{2q}3}\epsilon\la^p.
    \end{split}
\end{align*}
This completes the proof of the first statement of the lemma. The second statement of the lemma also follows from Lemma~\ref{lem32} and the above estimate. We omit the details.
\end{proof}

By considering the scaled map $\eta_\la(x,t)=\tfrac{1}{\rho\la}\eta(\rho x,\la^{2-p}\rho^2t)$ for $(x,t)\in Q_{4V}$ as in the proof of Lemma~\ref{lem33},
we deduce that $\eta_\la$ is a weak solution to
\begin{align}\label{365}
    \pa_t\eta_\la-\dv(b_0\mA_{\la}(z,\na\eta_\la))=0\quad\text{in}\quad Q_{4V}
\end{align}
and there holds
\begin{align}\label{366}
    \sup_{t\in I_{4V}}\fint_{B_{4V}}|\eta_\la(x,t)|^2\,dx+\fiint_{Q_{4V}}\left(|\eta_\la|^p+(H_\la(z,|\na \eta_\la|))^{1+\varepsilon_\delta}\right)\,dz\le c(\data_\delta).
\end{align}
Note that if $q\le p(1+\varepsilon_\delta)$, then we have $|\na \eta_\la|\in L^q(Q_{2V})$ since we have \eqref{366} and $|\na \eta_\la|^{p(1+\varepsilon_\delta)}\le (H_\la(z,|\na \eta_\la|))^{1+\varepsilon_\delta}$. We will prove $|\na \eta_{\la}|\in L^q(Q_{2V})$ for $p(1+\varepsilon_\delta)<q$.
For this, we use the following iteration lemma in \cite[Lemma 8.3]{MR1962933}.
\begin{lemma}\label{lem21}
	Let $0<r<R<\infty$ and $h:[r,R]\longrightarrow\RR^+$ be a non-negative and bounded function. Suppose there exist $\vartheta\in(0,1)$, $A,B\ge0$ and $\gamma>0$ such that
	\[h(r_1)\le \vartheta h(r_2)+\frac{A}{(r_2-r_1)^\gamma}+B
		\quad\text{for all}\quad
		0<r\le r_1<r_2\le R.\]
	Then there exists a constant $c=c(\vartheta,\gamma)$ such that
	\[h(r)\le c\left(\frac{A}{(R-r)^\gamma}+B\right).\]
\end{lemma}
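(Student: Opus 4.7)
The plan is to iterate the given sub-inequality along a geometric sequence of radii increasing from $r$ up to $R$, absorbing the $\vartheta h(r_2)$ term by repeated substitution and then summing the remainder as a convergent geometric series. First I would fix a parameter $\tau \in (0,1)$ satisfying $\vartheta/\tau^{\gamma} < 1$ whose value depends only on $\vartheta$ and $\gamma$ (for instance $\tau = ((1+\vartheta)/2)^{1/\gamma}$, which yields $\vartheta/\tau^\gamma = 2\vartheta/(1+\vartheta) < 1$). With this $\tau$ I would then define the increasing sequence $r_0 = r$ and $r_{i+1} = r_i + (1-\tau)\tau^{i}(R-r)$, noting that $r_i \nearrow R$ and $r_{i+1} - r_i = (1-\tau)\tau^{i}(R-r)$, so that each pair $(r_i, r_{i+1})$ lies in $[r,R]$ and is admissible in the hypothesis.

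Applying the hypothesis to each consecutive pair gives
\[ h(r_i) \le \vartheta\, h(r_{i+1}) + \frac{A}{(1-\tau)^{\gamma} \tau^{i\gamma}(R-r)^{\gamma}} + B. \]
Iterating this $k$ times starting from $i=0$ yields
\[ h(r) \le \vartheta^{k} h(r_k) + \sum_{i=0}^{k-1}\vartheta^{i}\left(\frac{A}{(1-\tau)^{\gamma}\tau^{i\gamma}(R-r)^{\gamma}} + B\right). \]
Since $h$ is bounded on $[r,R]$ and $\vartheta \in (0,1)$, the leading term $\vartheta^{k} h(r_k)$ tends to $0$ as $k \to \infty$, while $\sum_{i=0}^{\infty}\vartheta^{i} = (1-\vartheta)^{-1}$ absorbs the $B$-contribution and $\sum_{i=0}^{\infty}(\vartheta/\tau^{\gamma})^{i} = (1-\vartheta/\tau^{\gamma})^{-1}$ absorbs the $A$-contribution thanks to the choice of $\tau$. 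Passing to the limit $k \to \infty$ therefore produces
\[ h(r) \le \frac{1}{1-\vartheta/\tau^{\gamma}} \cdot \frac{A}{(1-\tau)^{\gamma}(R-r)^{\gamma}} + \frac{B}{1-\vartheta}, \]
which is precisely the claimed bound with a constant $c$ depending only on $\vartheta$ and $\gamma$.

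There is no real obstacle here; the only thing to be careful about is that a single choice of $\tau = \tau(\vartheta,\gamma)$ must simultaneously make both series convergent, and that the boundedness of $h$ is genuinely used to discard the $\vartheta^{k}h(r_k)$ term in the limit. This is the classical Giaquinta--Giusti hole-filling iteration argument and the proof is essentially bookkeeping once the sequence $\{r_i\}$ is chosen.
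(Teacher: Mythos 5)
Your proof is correct and is exactly the classical hole-filling iteration from Giusti's book, which is what the paper cites (\cite[Lemma 8.3]{MR1962933}) without reproducing a proof. The choice $\tau^\gamma=(1+\vartheta)/2$ indeed makes $\vartheta/\tau^\gamma<1$ while keeping $\tau\in(0,1)$, the telescoping sum is set up correctly, and the boundedness of $h$ is used precisely where needed to kill the $\vartheta^k h(r_k)$ remainder; nothing is missing.
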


\begin{lemma}\label{lem35}
    There exists $c=c(\data_\delta)$ such that
    \[\fiint_{Q_{2V\rho}^\la}|\na\eta|^q\,dz\le c\la^q.\]
\end{lemma}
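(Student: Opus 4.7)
My plan is to first perform a scaling reduction. Using the change of variables $\eta_\la(x,t)=\tfrac{1}{\rho\la}\eta(\rho x,\la^{2-p}\rho^2 t)$ introduced before \eqref{365}, a routine computation parallel to the one in the proof of Lemma~\ref{lem33} gives
\[\fiint_{Q_{2V\rho}^\la}|\na\eta|^q\,dz=\la^q\fiint_{Q_{2V}}|\na\eta_\la|^q\,dz,\]
so it suffices to prove $\fiint_{Q_{2V}}|\na\eta_\la|^q\,dz\le c(\data_\delta)$, where $\eta_\la$ solves the homogeneous system \eqref{365} on $Q_{4V}$ and enjoys the a priori bounds \eqref{366} together with the uniform coefficient bounds $[a_\la]_{\alpha,\alpha/2;Q_{4V}}\le[a]_\alpha$ from \eqref{344} and $\|a_\la\|_\infty\le c(\data_\delta)$ from \eqref{352}. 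The easy subcase $q\le p(1+\varepsilon_\delta)$ is then immediate: pointwise we have $|\na\eta_\la|^q\le 1+|\na\eta_\la|^{p(1+\varepsilon_\delta)}\le 1+(H_\la(z,|\na\eta_\la|))^{1+\varepsilon_\delta}$, and \eqref{366} supplies the desired bound.

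For the remaining case $p(1+\varepsilon_\delta)<q$, I would boost integrability by a Moser-type iteration on a nested family of cylinders $Q_{r_k}$ with $2V=r_K<r_{K-1}<\dots<r_0=4V$. At each stage, I would test the Steklov-mollified weak formulation of \eqref{365} (exactly as done for $\zeta$ in Lemma~\ref{lem32}) with a cut-off function of the form $\eta_\la\,\psi^s\,\Phi_k(|\eta_\la|)$ for a monotone $\Phi_k$ encoding the current weight $(H_\la)^{\sigma_k-1}$, then invoke a parabolic Sobolev embedding. The outcome is a reverse-Hölder inequality of the type
\[\fiint_{Q_{r_{k+1}}}(H_\la(z,|\na\eta_\la|))^{\sigma_{k+1}}\,dz\le\tfrac12\fiint_{Q_{r_k}}(H_\la(z,|\na\eta_\la|))^{\sigma_{k+1}}\,dz+\frac{c(\data_\delta)}{(r_k-r_{k+1})^\gamma}\left(\fiint_{Q_{r_k}}(H_\la(z,|\na\eta_\la|))^{\sigma_k}\,dz+1\right)^{\beta_k},\]
with $\sigma_0=1+\varepsilon_\delta$ and a fixed geometric growth $\sigma_{k+1}=\theta\sigma_k$. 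Applying Lemma~\ref{lem21} absorbs the first term on the right, and iterating finitely many times (an $N=N(\data_\delta)$ suffices, since $\theta$ and $\varepsilon_\delta$ depend only on $\data_\delta$) upgrades \eqref{366} to $(H_\la)^{q/p}\in L^1(Q_{2V})$ with quantitative bound, whence $|\na\eta_\la|^q\in L^1(Q_{2V})$.

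The main obstacle is handling the $q$-growth term $a_\la(z)|\na\eta_\la|^q$ at every step of the iteration, since it is not controlled by $L^p$-integrability of $\na\eta_\la$ alone. This is precisely where the gap condition $q\le p+\tfrac{2\alpha}{n+2}$ enters, through Lemma~\ref{lem42}: the smallness estimate for $\rho^\alpha\la^{q-p}$ makes the rescaled coefficient $a_\la$ behave as a bounded perturbation on the unit scale. Together with $[a_\la]_\alpha\le[a]_\alpha$ and $\|a_\la\|_\infty\le c(\data_\delta)$, this keeps every constant produced by the iteration under the control of $\data_\delta$, so the number of iterations $N$ and the final constant do not blow up. Undoing the scaling produces the claimed bound $\fiint_{Q_{2V\rho}^\la}|\na\eta|^q\,dz\le c(\data_\delta)\la^q$.
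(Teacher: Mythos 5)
Your scaling reduction and the easy subcase $q\le p(1+\varepsilon_\delta)$ are fine and match the paper. The gap is in the main case $p(1+\varepsilon_\delta)<q$: the Moser-type iteration you propose cannot be run with test functions of the form $\eta_\la\,\psi^s\,\Phi_k(|\eta_\la|)$. Such a test function depends only on the solution, so testing the weak formulation yields Caccioppoli inequalities weighted by powers of $|\eta_\la|$, which is the machinery for sup-bounds on $\eta_\la$ itself; it cannot ``encode the weight $(H_\la(z,|\na\eta_\la|))^{\sigma_k-1}$'', which involves the gradient, and it does not produce a reverse-H\"older inequality in which the integrability exponent of $\na\eta_\la$ increases geometrically. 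Raising the gradient exponent requires estimates at the level of the differentiated system, i.e.\ a difference-quotient Caccioppoli inequality for $|\na\eta_\la|$; this is exactly the a priori estimate from \cite[Lemma 4.2]{MR3532237} that the paper invokes, which gives, for $2V\le r_1<r_2\le 3V$ and $s\in(p,p+\tfrac{2\alpha}{n+2-\alpha})$, a bound of the form $\iint_{Q_{r_1}}|\na\eta_\la|^s\,dz\le \tfrac{c}{(r_2-r_1)^\beta}\bigl(\iint_{Q_{r_2}}|\na\eta_\la|^q\,dz+M_{z_0,R}\bigr)^{1+\frac{s-p}{2}}$ under the structure conditions \eqref{371}.

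Moreover, even granting a gradient-level iteration, your scheme with unlimited geometric growth $\sigma_{k+1}=\theta\sigma_k$ is structurally impossible here: because $a_\la$ is only H\"older continuous in $x$ (the last condition in \eqref{371} carries the error $|x_1-x_2|^\alpha(1+|\xi|)^q$), the attainable integrability is capped below $p+\tfrac{2\alpha}{n+2-\alpha}$, and for $\alpha=1$ one must even retreat to some $\alpha'<1$. The paper therefore does not iterate exponents at all; it fixes a single $s\in(p+\tfrac{2\alpha}{n+2},p+\tfrac{2\alpha}{n+2-\alpha})$ slightly above $q$, splits $|\na\eta_\la|^q=|\na\eta_\la|^{s/\mu}|\na\eta_\la|^{(q\mu-s)/\mu}$, and chooses $\mu$ so that \eqref{451} holds, which is precisely where the higher-integrability exponent $\varepsilon_\delta$ from \eqref{366} is needed to absorb the $s$-term via Lemma~\ref{lem21} and to reach the borderline case $q=p+\tfrac{2\alpha}{n+2}$. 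Your appeal to Lemma~\ref{lem42} (smallness of $\rho^\alpha\la^{q-p}$, hence \eqref{344} and the bound on $\|a_\la\|_\infty$) only normalizes the coefficient; it does not resolve this exponent bookkeeping, which is the actual content of the proof and is missing from your argument.
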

\begin{proof}
    We enough to show 
    \[\fiint_{Q_{2V}}|\na \eta_\la|^q\,dz\le c(\data_\delta).\]
    We revisit the proof in \cite[Lemma 4.2]{MR3532237}. In this reference, the above estimate is obtained when $\alpha\in(0,1)$ and $q\in(p,p+\tfrac{2\alpha}{n+2})$. We will modify the proof therein and extend the above estimate to the case when $\alpha\in(0,1]$ and $q\in(p,p+\tfrac{2\alpha}{n+2}]$ by utilizing the higher integrability estimate \eqref{366}. We divide the proof into two cases $\alpha\in(0,1)$ and $\alpha=1$.
    
    \textit{Case $\alpha\in(0,1)$}:
    First of all, we remark that the structural condition in \eqref{365} satisfies the hypothesis there. We recall $\mA_\la(z,\xi)=|\xi|^{p-2}\xi+a_\la(z)|\xi|^{q-2}\xi$ and define $\mathcal{H}_\la(z,|\xi|)=\tfrac{1}{p}|\xi|^p+\tfrac{1}{q}a_\la(z)|\xi|^q.$
    It follows from \eqref{344} and \eqref{352} that there exists $c=c(\data)$ such that
    \begin{align}\label{371}
        \begin{split}
            &\tfrac{1}{c}|\xi|^p\le \mathcal{H}_\la(z,|\xi|)\le c(1+|\xi|)^q,\\
            &|\pa_\xi\mA_\la(z,\xi)|\le c(1+|\xi|)^{q-2},\\
            &|\xi|^{p-2}|\xi'|^2\le c \left(\pa_\xi\mA_\la(z,\xi)\xi'\cdot\xi'\right),\\
            &|\mathcal{H}_\la(x_1,t,\xi)-\mathcal{H}_\la(x_2,t,\xi)|\le c|x_1-x_2|^\alpha(1+|\xi|)^q.
        \end{split}
    \end{align}
    In the reference domains of the proof in \cite[Lemma 4.2]{MR3532237}, we set $Q_{\rho_1}(z_0)=Q_{2V}$, $Q_{\rho_2}(z_0)=Q_{3V}$ and replace $M_{z_0,R}$ by
    \[\sup_{t\in I_{3V}}\int_{B_{3V}}|\eta_\la(x,t)|^2\,dx+\iint_{Q_{3V}}\left(|\eta_\la|^p+|\na \eta_\la|^{p(1+\varepsilon_\delta)}\right)\,dz+1.\]
    Then for any $2V\le r_1<r_2\le 3V$ and $s\in(p,p+\tfrac{2\alpha}{n+2-\alpha})$, there exist $c=c(\data_\delta,s)$ and $\beta=\beta(\data_\delta,s)$ such that
    \[\iint_{Q_{r_1}}|\na\eta_\la|^s\,dz\le \frac{c}{(r_2-r_1)^\beta}\left(\iint_{Q_{r_2}}|\na \eta_\la|^q\,dz+M_{z_0,R}\right)^{1+\frac{s-p}{2}}.\]
    For any $q\in(p,p+\tfrac{2\alpha}{n+2}]$, if $s\in(p+\tfrac{2\alpha}{n+2},p+\tfrac{2\alpha}{n+2-\alpha})$ and $\mu>\tfrac{s}{q}>1$ satisfy
    \begin{align}\label{451}
        \frac{1}{\mu}\left(1+\tfrac{s-p}{2}\right)<1\quad\text{and}\quad\frac{\mu q-s}{\mu -1}\le p(1+\varepsilon_\delta),
    \end{align}
    then using $|\na\eta_\la|^q=|\na\eta_\la|^\frac{s}{\mu}|\na \eta_\la|^{\frac{q\mu-s}{\mu}}$, H\"older's inequality and Young's inequality, we obtain
    \begin{align*}
        \begin{split}
            &\iint_{Q_{r_1}}|\na \eta_\la|^s\,dz
            \le \frac{c}{(r_2-r_1)^\beta}M_{z_0,R}^{1+\frac{s-p}{2}}\\
            &\qquad+\frac{c}{(r_2-r_1)^\beta}\left(\iint_{Q_{r_2}}|\na \eta_\la|^s\,dz\right)^{\frac{1}{\mu}\left(1+\frac{s-p}{2}\right)} \left(\iint_{Q_{r_2}}|\na \eta_\la|^\frac{q\mu-s}{\mu-1}\,dz\right)^{\gamma}\\
            &\qquad\qquad\le \frac{1}{2}\iint_{Q_{r_2}}|\na \eta_\la|^s\,dz+\frac{c}{(r_2-r_1)^{\beta'}}\left(\iint_{Q_{3V}}|\na \eta_\la|^{p(1+\varepsilon_\delta)}\,dz\right)^{\chi}\\
            &\qquad\qquad\qquad+\frac{c}{(r_2-r_1)^{\beta}}M_{z_0,R}^{1+\frac{s-p}{2}}
        \end{split}
    \end{align*}
    for $\gamma=\tfrac{\mu-1}{\mu}\left(1+\tfrac{s-p}{2}\right)$, $c=c(\data_\delta,s,\mu)$, $\chi=\chi(p,s,\mu)$ and $\beta'=\beta'(\beta,p,s,\mu)$. The conclusion follows from Lemma~\ref{lem21} and \eqref{366}. Hence, in the remaining proof we will show that for any $q\in(p,p+\tfrac{2\alpha}{n+2}]$, there exist $s\in(p+\tfrac{2\alpha}{n+2},p+\tfrac{2\alpha}{n+2-\alpha})$ and $\mu>\tfrac{s}{q}$ satisfying \eqref{451}. 
    We observe \eqref{451} is equivalent to
     \[1+\frac{s-p}{2}<\mu\quad\text{and}\quad\mu\le \frac{s-p(1+\varepsilon_\delta)}{q-p(1+\varepsilon_\delta)}.\]
      Note $\tfrac{s-p(1+\varepsilon_\delta)}{q-p(1+\varepsilon_\delta)}>\tfrac{s}{q}$ holds. We may take $\mu=\tfrac{s-p(1+\varepsilon_\delta)}{q-p(1+\varepsilon_\delta)}$ provided
      \[1+\frac{s-p}{2}< \frac{s-p(1+\varepsilon_\delta)}{q-p(1+\varepsilon_\delta)}\Longleftrightarrow q-p(1+\ep_\delta)<2-\frac{4+2\varepsilon_\delta p}{2+s-p}.\]
      Since $q\le p+\tfrac{2\alpha}{n+2}$, the last inequality holds if
      \[\frac{2\alpha}{n+2}<2-\frac{4}{2+s-p}+\varepsilon_\delta p\left(1-\frac{2}{2+s-p}\right),\]
      Using $p+\tfrac{2\alpha}{n+2}< s$, the above inequality is true when
      \[\frac{2\alpha}{n+2}<2-\frac{4}{2+s-p}+\frac{\varepsilon_\delta \alpha p}{n+2+\alpha}.      \]
      Substituting $s=p+\tfrac{2\alpha}{n+2-\alpha}-\kappa$ for sufficient small $\kappa$, the above inequality becomes
      \[\frac{\alpha}{n+2}<1-\frac{1}{1+\frac{\alpha}{n+2-\alpha}-\frac{\kappa}{2}}+\frac{\varepsilon_\delta\alpha p}{2(n+2+\alpha)}\]
       Replacing $\tfrac{n+2-\alpha}{2}\kappa$ by $\kappa$, there holds
       \[\frac{\alpha}{n+2}<1-\frac{n+2-\alpha}{n+2-\kappa}+\frac{\varepsilon_\delta\alpha p}{2(n+2+\alpha)}.\]
       Finally, the above inequality is equivalent to
       \[\frac{n+2-\alpha}{n+2-\kappa}<\frac{n+2-\alpha}{n+2}+\frac{\varepsilon_\delta \alpha p}{2(n+2+\alpha)}.\]
       Therefore, taking $\kappa=\kappa(\data_\delta)\in(0,1)$ sufficiently small, it is possible to take $\mu=\tfrac{s-p(1+\varepsilon_\delta)}{q-p(1+\varepsilon_\delta)}$. This completes the proof when $\alpha\in(0,1)$.
     
     \textit{Case $\alpha=1$}: In this case, we take $\alpha'\in(0,1)$ and $\kappa\in(0,1)$ satisfying
     \begin{align}\label{464}
         p+\frac{2}{n+2}<p+\frac{2\alpha'}{n+2-\alpha'}
     \end{align}
     and
     \begin{align}\label{386}
         \frac{n+2-\alpha'}{n+2-\kappa}<\frac{n+1}{n+2}+\frac{\varepsilon_\delta p}{2(n+3)}.
     \end{align}
     Note that the above inequalities hold for $\kappa\in(0,1)$ sufficiently small and $\alpha'\in(0,1)$ sufficiently close to $1$ depending on $n,p,\varepsilon_\delta$. Indeed, note that \eqref{386} is equivalent to
     \[n+2-\alpha'<n+1+\frac{\varepsilon_\delta p(n+2)}{2(n+3)}-\kappa\left(\frac{n+1}{n+2}+\frac{\varepsilon_\delta p}{2(n+3)}\right).\]
     Thus we may choose $\kappa< \tfrac{\varepsilon_\delta p(n+2)}{8(n+3)}\left(\tfrac{n+1}{n+2}+\tfrac{\varepsilon_\delta p}{2(n+3)}\right)^{-1}$ and $\alpha'>1-\tfrac{\varepsilon_\delta p(n+2)}{4(n+3)}$.
     Note that the last structure condition in \eqref{371} holds by replacing $\alpha=1$ with $\alpha'\in(0,1)$. Then for any $2V\le r_1<r_2\le 3V$ and $s\in(p,p+\tfrac{2\alpha'}{n+2-\alpha'})$, there exist $c=c(\data_\delta,s)$ and $\beta=\beta(\data_\delta,s)$ such that
    \[\iint_{Q_{r_1}}|\na\eta_\la|^s\,dz\le \frac{c}{(r_2-r_1)^\beta}\left(\iint_{Q_{r_2}}|\na \eta_\la|^q\,dz+M_{z_0,R}\right)^{1+\frac{s-p}{2}}.\]
     Therefore, we enough to show that for any $q\in(p,p+\tfrac{2}{n+2}]$, there exist $s\in(p+\tfrac{2}{n+2},p+\tfrac{2\alpha'}{n+2-\alpha'})$ and $\mu>\tfrac{s}{q}>1$ such that
     \[\frac{1}{\mu}\left(1+\frac{s-p}{2}\right)<1\quad \text{and}\quad\frac{\mu q-s}{\mu-1}\le p(1+\varepsilon_\delta).\]
     Note that $(p+\tfrac{2}{n+2},p+\tfrac{2\alpha'}{n+2-\alpha'})$ is nonempty since \eqref{464} holds. Again, the above inequality is equivalent to
     \[1+\frac{s-p}{2}<\mu\quad \text{and}\quad \mu\le \frac{s-p(1+\varepsilon_\delta)}{q-p(1+\varepsilon_\delta)}.\]
     As in the previous case, we take $\mu=\tfrac{s-p(1+\varepsilon_\delta)}{q-p(1+\varepsilon_\delta)}$ by proving
     \[1+\frac{s-p}{2}<\frac{s-p(1+\varepsilon_\delta)}{q-p(1+\varepsilon_\delta)}
             \Longleftrightarrow q-p(1+\varepsilon_\delta)<2-\frac{4+2\varepsilon_\delta p}{2+s-p}.\]
     Using the fact that $q\le p+\tfrac{2}{n+2}<s$, we suffice to show
     \[\frac{2}{n+2}<2-\frac{4}{2+s-p}+\frac{\varepsilon_\delta p}{n+3}.\]
    Substituting $s=p+\tfrac{2\alpha'-2\kappa}{n+2-\alpha'}$ to the above inequality, it is equivalent to \eqref{386}. This completes the proof.
\end{proof}

Finally, consider the weak solution $v\in C(I_{2V\rho}^\la;L^2(B_{2V\rho},\RR^N))\cap L^q(I_{2V\rho}^\la;W^{1,q}(B_{2V\rho},\RR^N))$ to 
\[\begin{cases}
        v_t-\dv(b_0(|\na v|^{p-2}\na v+a_s|\na v|^{q-2}\na v))=0&\text{in}\quad Q_{2V\rho}^\la,\\
        v=\eta &\text{on}\quad \pa_pQ_{2V\rho}^\la,
    \end{cases}\]
where $a_s=\sup_{z\in Q_{2V\rho}^\la}a(z)$.

\begin{lemma}\label{lem36}
        There holds
       \[\frac{1}{|Q_{\rho}^\lambda|}\iint_{Q_{V\rho}^\la}\left(|\na \eta-\na v|^p+a_s|\na \eta-\na v|^q\right)\,dz\le \frac{1}{2^{2q} 3}\ep\la^p.\]
        Also, there exists $c=c(\data_\delta)$ such that
        \[\fiint_{Q_{2V\rho}^\la}\left(|\na v|^p+a_s|\na v|^q\right)\,dz\le c\la^p.\]
\end{lemma}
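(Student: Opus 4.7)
The plan is to mimic the proof of Lemma~\ref{lem34} line by line, with the oscillation $|b_0-b(z)|$ there replaced by the oscillation of $a$ around its supremum, $a_s-a(z)$. Because $a_s\ge 0$ is a constant, the frozen map $\xi\mapsto b_0(|\xi|^{p-2}\xi+a_s|\xi|^{q-2}\xi)$ still satisfies the $(p,q)$-monotonicity of \cite[Chapter 1, Lemma 4.4]{MR1230384}, and the difference $\eta-v$ weakly solves
\begin{align*}
\begin{split}
&\pa_t(\eta-v)-\dv\Big(b_0\big[(|\na\eta|^{p-2}\na\eta-|\na v|^{p-2}\na v)+a_s(|\na\eta|^{q-2}\na\eta-|\na v|^{q-2}\na v)\big]\Big)\\
&\qquad=\dv\big(b_0(a_s-a(z))|\na\eta|^{q-2}\na\eta\big)
\end{split}
\end{align*}
in $Q_{2V\rho}^\la$ with $\eta-v\equiv0$ on $\pa_pQ_{2V\rho}^\la$. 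I would first test the Steklov average of this equation against $[\eta-v]_h\zeta_{\tau_1,\tau_2}^\vartheta$, then let $\vartheta,h\to 0^+$ and $\tau_1\to-\la^{2-p}(2V\rho)^2$ exactly as in Lemma~\ref{lem32}, to arrive at
\begin{align*}
\begin{split}
    &\la^{p-2}\sup_{t\in I_{2V\rho}^\la}\fint_{B_{2V\rho}}\frac{|\eta-v|^2(x,t)}{(2V\rho)^2}\,dx+\fiint_{Q_{2V\rho}^\la}\big(|\na\eta-\na v|^p+a_s|\na\eta-\na v|^q\big)\,dz\\
    &\qquad\le c\fiint_{Q_{2V\rho}^\la}(a_s-a(z))|\na\eta|^{q-1}|\na\eta-\na v|\,dz
\end{split}
\end{align*}
with $c=c(n,N,p,q,\nu,L)$.

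Next I would bound the right-hand side using the Hölder regularity \eqref{16}. Since $\la\ge 1$ and $p\ge 2$, one has $\la^{(2-p)/2}\le 1$, so the parabolic diameter of $Q_{2V\rho}^\la$ is at most $c\rho$ and hence $|a_s-a(z)|\le c[a]_\alpha\rho^\alpha$ throughout the cylinder. A Young inequality with a small parameter produces
\[(a_s-a(z))|\na\eta|^{q-1}|\na\eta-\na v|\le c[a]_\alpha\rho^\alpha|\na\eta|^q+\tfrac{1}{2}\,a_s|\na\eta-\na v|^q,\]
and the second term is absorbed into the left-hand side. The remaining factor $\fiint_{Q_{2V\rho}^\la}|\na\eta|^q\,dz$ is controlled by $c(\data_\delta)\la^q$ thanks to Lemma~\ref{lem35}, and the scalar inequality $\rho^\alpha\la^q\le c\epsilon\la^p$ (with all volume factors $V^{n+2}$ absorbed into the constant, as in the proof of Lemma~\ref{lem34}) is exactly what Lemma~\ref{lem42} delivers, provided $\rho_0=\rho_0(\data_\delta,\|a\|_\infty,\|H(z,|F|)\|_{1+\varepsilon_0},\epsilon)$ is chosen small enough. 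This establishes the first conclusion.

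For the second conclusion I would use $|\na v|\le|\na\eta|+|\na\eta-\na v|$, combine the first conclusion with the bound $\fiint_{Q_{2V\rho}^\la}H(z,|\na\eta|)\,dz\le c\la^p$ inherited from Lemma~\ref{lem34}, and convert the $a(z)$-weight into the $a_s$-weight via $a_s-a(z)\le c[a]_\alpha\rho^\alpha$ together with Lemma~\ref{lem35} and Lemma~\ref{lem42}. The main delicate point I anticipate is the Young-splitting above: since $a_s$ is only bounded below by $\inf_{C_R}a$, the splitting must be arranged so that $a_s$ appears as the coefficient of $|\na\eta-\na v|^q$ on \emph{both} sides of the Young inequality, making the absorption step insensitive to how small $\inf a$ happens to be. Beyond this bookkeeping, the argument is a straightforward adaptation of Lemma~\ref{lem34}.
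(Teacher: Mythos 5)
Your proposal is correct and follows essentially the same route as the paper: the energy/comparison inequality for $\eta-v$ with right-hand side $c\fiint|a_s-a(z)|\,|\na\eta|^{q-1}|\na\eta-\na v|\,dz$, the Young splitting keeping the weight $|a_s-a(z)|\le a_s$ so the $q$-term absorbs, then Lemma~\ref{lem35} and Lemma~\ref{lem42} to bound $\rho^\alpha\fiint|\na\eta|^q\,dz\le c\rho^\alpha\la^q\le c\ep\la^p$, and for the second bound the comparison with $\na\eta$ plus the conversion of $a_s$ back using $a_s-a(z)\le[a]_\alpha(2V\rho)^\alpha$ (the paper instead writes $a_s\le a_0+[a]_\alpha(2V\rho)^\alpha$ and uses $a_0\la^q\le K^2\la^p$, an equivalent bookkeeping). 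The only blemish is a harmless sign slip in your rewritten equation (the source term should be $\dv\big(b_0(a(z)-a_s)|\na\eta|^{q-2}\na\eta\big)$), which does not affect the estimate since only $|a_s-a(z)|$ enters.
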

 
\begin{proof}
    The proof is similar to the proof of Lemma~\ref{lem34}. There exists $c=c(n,N,p,q,\nu,L)$ such that
    \begin{align*}
    \begin{split}
        &\fiint_{Q_{2V\rho}^\la} \left(|\na \eta-\na v|^p+a_s|\na \eta-\na v|^q\right)\,dz\\
        &\qquad\le c\fiint_{Q_{2V\rho}^\la}|a(z)-a_s||\na \eta|^{q-1}|\na \eta-\na v|\,dz.
    \end{split}
    \end{align*}
     Since there holds $|a(z)-a_s|\le [a]_{\alpha}(2V\rho)^\alpha$
     in $Q_{2V\rho}^\la$, we apply Young's inequality to have
     \begin{align*}
         \begin{split}
             &c\fiint_{Q_{2V\rho}^\la}|a(z)-a_s||\na \eta|^{q-1}|\na \eta-\na v|\,dz\\
             &\qquad\le c\fiint_{Q_{2V\rho}^\la}|a(z)-a_s||\na \eta|^q\,dz+\fiint_{Q_{2V\rho}^\la}\frac{|a(z)-a_s|}{4} |\na\eta-\na v|^q\,dz\\
             &\qquad\qquad\le c(V\rho)^\alpha\fiint_{Q_{2V\rho}^\la}|\na \eta|^q\,dz+\fiint_{Q_{V\rho}^\la}\frac{a_s}{2} |\na\eta-\na v|^q\,dz.
         \end{split}
     \end{align*}
   Therefore we obtain
    \[\fiint_{Q_{2V\rho}^\la}\left(|\na \eta-\na v|^p+a_s|\na \eta-\na v|^q\right)\,dz\le c(V\rho)^\alpha\fiint_{Q_{2V\rho}^\la}|\na \eta|^q\,dz.\]
    For the last term, we use Lemma~\ref{lem35} and Lemma~\ref{lem42} to get
    \[c(V\rho)^\alpha\fiint_{Q_{2V\rho}^\la}|\na \eta|^q\,dz\le c(\data_\delta)\rho^\alpha\la^q\le \frac{1}{(2V)^{n+2}2^{2q}3}\epsilon\la^p.\]
    Therefore it follows that
    \[\frac{1}{|Q_{\rho}^\la|}\iint_{Q_{2V\rho}^\la}H(z,|\na \eta-\na v|)\,dz\le \frac{1}{2^{2q}3}\ep\la^p.\]
    To prove the second statement, we use Lemma~\ref{lem34}, Lemma~\ref{lem35} and the first statement. Then there holds
    \begin{align*}
    \begin{split}
        &\fiint_{Q_{2V\rho}}\left(|\na v|^p+a_s|\na v|^q\right)\,dz\\
        &\qquad\le 2^q\fiint_{Q_{2V\rho}}\left(|\na v-\na \eta|^p+a_s|\na v-\na\eta|^q\right)\,dz+2^q\fiint_{Q_{2V\rho}}\left(|\na \eta|^p+a_s|\na \eta|^q\right)\,dz\\
        &\qquad\qquad\le c(\data_\delta)(\la^p+a_s\la^q).
    \end{split}
    \end{align*}
    Recalling $a_s\le a_0+[a]_\alpha(2V\rho)^\alpha$, it follows from $a_0\la^q\le K^2\la^p$ and Lemma~\ref{lem42} that
    \[\fiint_{Q_{2V\rho}}\left(|\na v|^p+a_s|\na v|^q\right)\,dz\le c(\data_{\delta})\la^p.\]
    This completes the proof.
\end{proof}

The weak solution $v$ satisfies the local Lipschitz regularity in the spatial direction.
\begin{lemma}\label{lem37}
    There exists $c=c(\data_\delta)$ such that
    \[\sup_{z\in Q_{V\rho}^\la}|\na v(z)|\le c\la.\]
\end{lemma}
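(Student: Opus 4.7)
The plan is to reduce the Lipschitz estimate to a regularity statement for a homogeneous parabolic double-phase system with constant coefficients by applying the scaling already used in the proofs of Lemma~\ref{lem33} and Lemma~\ref{lem35}. Define
\[v_\la(x,t)=\tfrac{1}{\rho\la}v(\rho x,\la^{2-p}\rho^2 t),\qquad a_{s,\la}=\la^{q-p}a_s,\]
for $(x,t)\in Q_{2V}$. Following the same change-of-variable computation carried out for $\zeta_\la$ in Lemma~\ref{lem33}, one checks that $v_\la$ is a weak solution to
\[\pa_t v_\la-\dv\bigl(b_0(|\na v_\la|^{p-2}\na v_\la+a_{s,\la}|\na v_\la|^{q-2}\na v_\la)\bigr)=0\quad\text{in}\quad Q_{2V}.\]
Crucially, since $a_s$ is the supremum of $a$ over $Q_{2V\rho}^\la$, the scaled coefficient $a_{s,\la}$ is a \emph{constant}, so the equation for $v_\la$ is a standard double-phase system with frozen, constant coefficients $b_0$ and $a_{s,\la}$, with $b_0\in[\nu,L]$ by \eqref{12}.

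Next I would control the scaled coefficient and the energy of $v_\la$ in a way that depends only on $\data_\delta$. By Lemma~\ref{lem42} we have $\rho^\alpha\la^{q-p}\le c(\data_\delta)$ (this is the manner $V=9K$ and $K$ are chosen), and $a_s\le a_0+[a]_\alpha(2V\rho)^\alpha$ together with $a_0\la^q\le K^2\la^p$ gives
\[a_{s,\la}=\la^{q-p}a_s\le \la^{q-p}a_0+[a]_\alpha(2V)^\alpha\rho^\alpha\la^{q-p}\le c(\data_\delta).\]
Changing variables in the energy bound provided by Lemma~\ref{lem36} yields
\[\fiint_{Q_{2V}}\bigl(|\na v_\la|^p+a_{s,\la}|\na v_\la|^q\bigr)\,dz\le c(\data_\delta).\]

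Now I would invoke the sup-estimate for homogeneous parabolic double-phase systems with constant coefficients. Because $a_{s,\la}$ is a constant bounded by $c(\data_\delta)$, the equation for $v_\la$ falls into the class covered by the Lipschitz regularity theory for such systems, for example the arguments of \cite{MR3532237} (which yield an $L^\infty$-gradient bound depending only on the structural constants, the upper bound on the coefficient and on the energy); the condition $q\le p+\tfrac{2\alpha}{n+2}$ in \eqref{15} is exactly what is needed to apply that theory. The outcome is
\[\sup_{z\in Q_V}|\na v_\la(z)|\le c(\data_\delta),\]
and scaling back via $|\na v(z)|=\la|\na v_\la(z/(\rho,\la^{2-p}\rho^2))|$ delivers the claimed estimate on $Q_{V\rho}^\la$.

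The main obstacle is not the scaling bookkeeping but rather the invocation of the Lipschitz regularity for the frozen double-phase system: one must make sure the quoted result produces a gradient sup bound that is quantitative in only $\data_\delta$, i.e.\ independent of $\la$ and $\rho$. This is precisely why the preceding lemmas (higher integrability in Lemma~\ref{lem33}, the $L^q$ bound for $\na\eta$ in Lemma~\ref{lem35}, and the bound on $a_{s,\la}$ coming from Lemma~\ref{lem42} together with the $p$-intrinsic assumption $K^2\la^p\ge a_0\la^q$) were set up so that all parameters entering the regularity result are controlled by $\data_\delta$ once and for all. Given those, the present lemma is essentially a direct application of a known Lipschitz bound after scaling.
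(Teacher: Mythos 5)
Your scaling set-up, the constancy of the frozen coefficients, the bound $a_s\la^{q-p}\le c(\data_\delta)$, and the transfer of the energy bound from Lemma~\ref{lem36} are exactly what the paper does. The gap is in the one step that carries the whole lemma: the invocation of a quantitative Lipschitz bound for the frozen system. You appeal to ``the arguments of \cite{MR3532237}'' as yielding an $L^\infty$-gradient bound, but that reference provides only local $L^s$-gradient estimates for finite exponents $s<p+\tfrac{2\alpha}{n+2-\alpha}$ (indeed this is precisely what the paper extracts from it in Lemma~\ref{lem35} to get $\na\eta_\la\in L^q$); it does not give a gradient sup bound, and no such bound is available for general vectorial $(p,q)$-growth or double-phase systems with variable $a(\cdot)$. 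The reason the paper freezes $b_0$ and $a_s$ is exactly to land in the Uhlenbeck class with Orlicz growth: one sets $\varphi_\la(s)=b_0\left(\tfrac{1}{p}s^p+\tfrac{1}{q}a_s\la^{q-p}s^q\right)$, checks $\tfrac1c s\varphi_\la''(s)\le\varphi_\la'(s)\le c s\varphi_\la''(s)$, and applies \cite[Theorem 2.2]{MR3906361}. Even then the quoted theorem does not directly give $\sup|\na v_\la|\le c$: it gives
\[\min\Bigl\{\sup_{Q_V}\varphi_\la(|\na v_\la|)^{\frac n2}|\na v_\la|^{2-n},\ \sup_{Q_V}|\na v_\la|^2\Bigr\}\le c\fiint_{Q_{2V}}\bigl(|\na v_\la|^2+\varphi_\la(|\na v_\la|)\bigr)\,dz,\]
and one still has to remove the minimum (the paper does this by distinguishing $\sup|\na v_\la|\le 1$ from $\sup|\na v_\la|\ge 1$ and using $p\ge2$ so that $\varphi_\la(s)^{n/2}s^{2-n}\gtrsim s^2$ for $s\ge1$) and to bound $\fiint_{Q_{2V}}|\na v_\la|^2\,dz$ by the $p$-energy in \eqref{3118}. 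Your proposal skips both of these points.

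A second, smaller inaccuracy: you claim the gap condition $q\le p+\tfrac{2\alpha}{n+2}$ in \eqref{15} is ``exactly what is needed'' to apply the Lipschitz theory. For the frozen, constant-coefficient Uhlenbeck system no $p$--$q$ gap condition is required; what matters is the radial structure and the comparability $\varphi_\la'\sim s\varphi_\la''$, together with \eqref{12} and the bound on $a_s\la^{q-p}$. The gap condition is used elsewhere (in Lemma~\ref{lem42}, Lemma~\ref{lem35} and the comparison estimates), not in this step. So the architecture of your argument matches the paper, but as written the decisive regularity input is attributed to a result that does not furnish it, which is a genuine gap until you replace it by a Lipschitz estimate for parabolic Uhlenbeck systems with Orlicz growth such as \cite[Theorem 2.2]{MR3906361} and carry out the min-removal and $L^2$-energy bookkeeping described above.
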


\begin{proof}
    We again use the scaling argument. For $(x,t)\in Q_{2V}$, let
    \begin{align*}
    \begin{split}
        &v_\la(x,t)=\tfrac{1}{\rho\la}v(\rho x, \la^{2-p}\rho^2 t),\\ &\mathcal{B}_\la(\xi)=b_0(|\xi|^{p-2}\xi+a_s\la^{q-p}|\xi|^{q-2}\xi).
    \end{split}
    \end{align*}
    Then $v_\la$ is a weak solution to
    \[\pa_tv_\la-\dv\mathcal{B}_\la(\na v_\la)=0\quad \text{in}\quad Q_{2V}.\]
    Applying the change of variable to the second inequality in Lemma~\ref{lem36}, we get
    \begin{align}\label{3118}
        \fiint_{Q_{2V}}\left(|\na v_\la|^p+a_s\la^{q-p}|\na v_\la|^q\right)\,dz\le c(\data_\delta).
    \end{align}
    We define a convex function $\varphi_{\la}$ on $[0,\infty)$ as
    \[\varphi_\la(s)=b_0\left(\frac{1}{p}s^p+\frac{1}{q}a_s\la^{q-p}s^q\right).\]
    Then $\varphi_\la\in C^\infty(0,\infty)$ and $\partial_\xi\varphi_\la(|\xi|)=\mathcal{B}_\la(\xi)$ with $\varphi_{\la}(0)=0,$ $\varphi_{\la}'(0)=0,$ $\lim_{s\to\infty}\varphi_{\la}(s)=\infty$.
    Moreover, it is easy to see that there exists a constant $c=c(p,q)$ such that
    $\tfrac{1}{c}s\varphi''_{\la}(s)\le \varphi_{\la}'(s)\le cs\varphi_{\la}''(s)$.
    Therefore applying \cite[Theorem 2.2]{MR3906361}, then there exists $c=c(p,q)$ such that
    \begin{align*}
    \begin{split}
        &\min\biggl\{\sup_{Q_V}(\varphi_\la(|\na v_\la|)^\frac{n}{2}|\na v_\la|^{2-n}),\quad\sup_{Q_V}|\na v_\la|^2\biggr\}\\
        &\qquad\le c\fiint_{Q_{2V}}\left(|\na v_\la|^2+\varphi_{\la}(|\na v_\lambda|)\right)\,dz.
    \end{split}
    \end{align*}
    Observe that if $\sup_{Q_V}|\na v_\la|\le 1$, the there is nothing to prove. Suppose $1\le\sup_{Q_V}|\na v_\la|$. Since we have $p\ge2$ and $\tfrac{\nu}{p}s^p\le \varphi_{\la}(s)$  for $s\ge1$, there holds
    \[\sup_{Q_V}\left(\tfrac{\nu}{p}\right)^\frac{n}{2}|\na v_\lambda|^2\le \sup_{Q_V}\left(\tfrac{\nu}{p}\right)^\frac{n}{2}|\na v_\la|^{\frac{np}{2}+2-n}\le \sup_{Q_V}\varphi_\la(|\na v_\lambda|)^\frac{n}{2}|\na v_\la|^{2-n}.\]
    It follows that
    \begin{align*}
    \begin{split}
        &\min\biggl\{1,\left(\tfrac{\nu}{p}\right)^\frac{n}{2}\biggr\}\sup_{Q_V}|\na v_\la|^2
        \le\min\biggl\{\sup_{Q_V}(\varphi_\la(|\na v_\la|)^\frac{n}{2}|\na v_\la|^{2-n}),\quad\sup_{Q_V}|\na v_\la|^2\biggr\}\\
        &\qquad\le c\fiint_{Q_{2V}}\left(|\na v_\la|^2+\varphi_{\la}(|\na v_\lambda|)\right)\,dz.
    \end{split}
    \end{align*}
    On the other hand recalling $a_s\la^{q-p}\le c(\data_\delta)$, we obtain
    \begin{align*}
      \begin{split}
          &\fiint_{Q_{2V}}|\na v_\la|^2\,dz
          \le \fiint_{Q_{2V}}(|\na v_\la|+1)^p\,dz
          \le \frac{1}{\nu}\fiint_{Q_{2V}}\varphi_\lambda(|\na v_\la|+1)\,dz\\
          &\qquad\le c(\data_\delta)\fiint_{Q_{2V}} \left(\varphi_\lambda(|\na v_\la|)+1\right)\,dz\le c(\data_\delta),
      \end{split}
    \end{align*}
    where we used \eqref{3118} to obtain the last inequality. We conclude
    \[\sup_{z\in Q_{V}}|\na v_{\la}(z)|\le c(\data_\delta).\]
    The proof is completed by applying the change of variables.
\end{proof}
Combining lemmas in this subsection and the estimate below, the conclusion of Proposition~\ref{prop1} is followed.
\begin{align*}
        \begin{split}
            &H(z,|\na u-\na v|)
            \le 2^qH(z,|\na u-\na \zeta|)+2^qH(z,|\na \zeta-\na v|)\\
            &\quad\le 2^qH(z,|\na u-\na \zeta|)+2^{2q}H(z,|\na \zeta-\na \eta|)+2^{2q}H(z,|\na\eta-\na v|).
        \end{split}
\end{align*}

\subsection{$(p,q)$-intrinsic case}
The $(p,q)$-intrinsic cylinder is defined as
\[G_{\rho}^\la(z_0)=B_{\rho}(x_0)\times J_{\rho}^\la(t_0),\quad J_\rho^\la(t_0)=(t_0-\tfrac{\la^2}{ H(z_0,\la)}\rho^2,t_0+\tfrac{\la^2}{H(z_0,\la)}\rho^2),\]
with a center point $z_0=(x_0,t_0)\in \RR^n\times \RR$, $\rho>0$ and $\la\ge 1$.
This subsection considers the comparison estimate in $(p,q)$-intrinsic case $K^2\la_w^p<a(w)\la_w^q$.
\begin{proposition}\label{prop2}
Let $\epsilon>0$ be a fixed constant. There exist $\delta=\delta(\data,\epsilon)\in(0,1)$, $\rho_0=\rho_0(\data, \|a\|_{\infty}, \|H(z,|F|)\|_{1+\varepsilon_0}, \epsilon)\in (0,1)$ such that if there exists an intrinsic cylinder $G_{5V\varrho_w}^{\la_w}(w)\subset Q_{2\rho_0}(z_0)\subset   C_{R/2}$ for some $\la_w>1$ satisfying
\begin{enumerate}[label=(\roman*),series=theoremconditions]
    \item $(p,q)$-intrinsic case: $K^2\la_w^p < a(w)\la_w^q$,
    \item stopping time argument for $(p,q)$-intrinsic cylinder:
    \begin{enumerate}[label=(\alph*),series=theoremconditions]
        \item $\fiint_{G_{5V\varrho_w}^{\la_w}(w)}\left(H(z,|\na u|)+\delta^{-1}H(z,|F|)\right)\,dz< H(w,\la_w)$,
        \item $\fiint_{G_{\varrho_w}^{\la_w}(w)}\left(H(z,|\na u|)+\delta^{-1}H(z,|F|)\right)\,dz=H(w,\la_w)$,
    \end{enumerate}
\end{enumerate}
then there exists a weak solution $v_w$ to
\[\pa_tv_w-\dv(b_0(|\na v_w|^{p-2}\na v_w+a_s|\na v_w|^{q-2}\na v_w))=0\]
in $G_{2V\varrho_w}^{\la_w}(w)$ such that
\[\iint_{G_{V\varrho_w}^{\la_w}(w)}H(z,|\na u-\na v_w|)\,dz\le \epsilon H(w,\la_w)|G_{\varrho_w}^{\la_w}|,\]
and the following local Lipschitz estimate holds
\[\sup_{z\in G_{V\varrho_w}^{\la_w}(w)}|\na v_w(z)|\le c\la_w,\]
where $c=c(n,p,q,\nu,L)>0$,
\[b_0=b_{G_{2V\varrho_w}^{\la_w}(w)}\quad\text{and}\quad   a_s=\sup_{z\in G_{2V\varrho_w}^{\la_w}(w)}a(z).\]
\end{proposition}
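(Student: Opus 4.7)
The plan is to mirror the proof of Proposition~\ref{prop1} step by step, replacing the $p$-intrinsic cylinder $Q_\rho^\la$ by the $(p,q)$-intrinsic cylinder $G_\varrho^\la$ and the time scale $\la^{2-p}$ by $\tfrac{\la^2}{H(0,\la)}$ throughout. After translating so that $w=0$ and writing $a_0=a(0)$, $\la=\la_w$, $\varrho=\varrho_w$, the hypothesis $K^2\la^p<a_0\la^q$ forces the $q$-term to dominate, so the natural rescaling
\[\zeta_\la(x,t)=\tfrac{1}{\varrho\la}\zeta\!\left(\varrho x,\tfrac{\la^2}{H(0,\la)}\varrho^2 t\right),\quad a_\la(x,t)=\tfrac{\la^q}{H(0,\la)}a\!\left(\varrho x,\tfrac{\la^2}{H(0,\la)}\varrho^2 t\right),\]
produces a rescaled double-phase operator whose $p$- and $q$-weights $\tfrac{\la^p}{H(0,\la)}$ and $\tfrac{a_0\la^q}{H(0,\la)}$ lie in $[0,1]$ and sum to $1$, with $a_\la(0)\ge\tfrac{K^2}{1+K^2}$ bounded below and $[a_\la]_{\alpha,\alpha/2}\le\varrho^\alpha\tfrac{\la^q}{H(0,\la)}[a]_\alpha\le[a]_\alpha$. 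This uniform control is exactly what lets Theorem~\ref{higher} and the structural inequalities of \eqref{371} be applied at the unit scale. As a preliminary, the analog of Lemma~\ref{lem42} holds with $\la^p$ replaced by $H(0,\la)$: combining Theorem~\ref{higher} with the stopping time identity (ii)(b) gives $\varrho^\alpha\la^q\le c\rho_0^{\alpha\varepsilon_0/(1+\varepsilon_0)}H(0,\la)$, which is made smaller than $\tfrac{\ep}{(2V)^{n+2}2^{2q}\cdot 3}H(0,\la)$ for $\rho_0$ small; the Caccioppoli $L^\infty$-$L^2$ and $L^p$ bounds on $u-u_{G_{4V\varrho}^\la}$ follow from the $(p,q)$-intrinsic versions already established in \cite{KKM}.

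I would then introduce three successive comparison functions $\zeta,\eta,v$ on nested cylinders inside $G_{5V\varrho}^\la$, exactly as in Lemmas~\ref{lem32},~\ref{lem34},~\ref{lem36}: $\zeta$ solving the original system $\zeta_t-\dv(b\mA(z,\na\zeta))=0$ with boundary datum $u-u_{G_{4V\varrho}^\la}$, $\eta$ solving the system with $b$ replaced by its average $b_0$, and $v$ solving the system with $a$ further replaced by $a_s=\sup a$. The Steklov-average testing with the cutoff $\zeta_{\tau_1,\tau_2}^\vartheta$ transfers verbatim and yields at each step an inequality of the form
\[\frac{1}{|G_\varrho^\la|}\iint_{G_{V\varrho}^\la}H(z,|\na(\cdot)-\na(\cdot)|)\,dz\le(\mathrm{error})\cdot H(0,\la),\]
where the error is controlled by $\delta$-smallness (for $u$ vs $\zeta$), by the local VMO condition \eqref{115} combined with higher integrability of $\na\zeta$ obtained by applying Theorem~\ref{higher} to $\zeta_\la$ on the unit cylinder (for $\zeta$ vs $\eta$), and by the H\"older oscillation $|a(z)-a_s|\le[a]_\alpha(2V\varrho)^\alpha$ together with an $L^q$ bound on $\na\eta$ (for $\eta$ vs $v$). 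The constants $\delta$ and $\rho_0$ are chosen so that each error term lies below $\tfrac{\ep}{3\cdot 2^{2q}}H(0,\la)$.

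The main obstacle, as in the $p$-intrinsic case, is the $L^q$ estimate for $\na\eta$ analogous to Lemma~\ref{lem35}. Here the rescaled operator is no longer a small perturbation of the $p$-Laplacian but rather a $q$-Laplace-type operator with a non-dominant $p$-part, since $a_\la(0)\ge\tfrac{K^2}{1+K^2}$ is bounded away from zero. Nevertheless, the structural inequalities \eqref{371} still hold for the rescaled operator with constants depending only on $\data_\delta$, and the purely algebraic exponent condition \eqref{451} is unchanged, so the Gehring-type iteration through Lemma~\ref{lem21} adapted as in the two cases $\alpha\in(0,1)$ and $\alpha=1$ produces $\fiint_{G_{2V\varrho}^\la}|\na\eta|^q\,dz\le c(\data_\delta)\la^q$. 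Finally, the Lipschitz estimate for $v$ follows from \cite[Theorem~2.2]{MR3906361} applied to $v_\la$: the rescaled convex function
\[\varphi_\la(s)=b_0\left(\tfrac{1}{p}\tfrac{\la^p}{H(0,\la)}s^p+\tfrac{1}{q}a_s\tfrac{\la^q}{H(0,\la)}s^q\right)\]
satisfies $s\varphi_\la''(s)\simeq\varphi_\la'(s)$ with constants depending only on $p,q$, and because the $(p,q)$-intrinsic geometry forces $\tfrac{\la^p}{H(0,\la)}+a_s\tfrac{\la^q}{H(0,\la)}\simeq 1$ with universal constants, the resulting Lipschitz bound on $|\na v_\la|$ depends only on $n,p,q,\nu,L$, as asserted. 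Undoing the scaling and combining the three comparison estimates via
\[H(z,|\na u-\na v|)\le 2^qH(z,|\na u-\na\zeta|)+2^{2q}H(z,|\na\zeta-\na\eta|)+2^{2q}H(z,|\na\eta-\na v|)\]
completes the proof of Proposition~\ref{prop2}.
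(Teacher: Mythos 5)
The proposal takes a genuinely different route from the paper, and that route has a gap which prevents it from yielding the constant dependency $c=c(n,p,q,\nu,L)$ asserted in the statement.

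The paper's proof does \emph{not} mirror Proposition~\ref{prop1} step by step. The comparison chain is reordered: $\zeta$ solves the original system (boundary $u$), then $\eta$ solves $\eta_t-\dv(b\,\mA(0,\na\eta))=0$ (freezing $a$ at $a(0)$ while keeping the VMO coefficient $b$), and only then $v$ solves $v_t-\dv(b_0\,\mA(0,\na v))=0$ (freezing $b$). You instead keep the $p$-intrinsic order ($b\to b_0$ first, $a\to a_s$ last). The reordering is not cosmetic. The crucial preliminary fact you are missing is Lemma~\ref{lem53}: the $(p,q)$-intrinsic hypothesis $K^2\la^p<a_0\la^q$, combined with the stopping-time identity and the choice of $K$, forces $[a]_\alpha(5V\varrho)^\alpha<\inf_{Q_{5V\varrho}}a$, hence $\tfrac{1}{2}a(0)\le a(z)\le 2a(0)$ throughout $Q_{5V\varrho}$. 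This is what turns the system into a \emph{uniformly} $q$-Laplace type problem: after rescaling, $\tfrac{a_0\la^q}{H(0,\la)}\ge\tfrac12$ at every point, not just at the centre. You only note $a_\la(0)\ge\tfrac{K^2}{1+K^2}$, which controls the centre, and the H\"older bound $[a_\la]_\alpha\le[a]_\alpha$, which is not small enough by itself to propagate the lower bound across a cylinder of radius $5V=45K$.

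With Lemma~\ref{lem53} in hand, the $L^q$ bound $\fiint_{G}|\na\eta|^q\,dz\le c\la^q$ that you identify as the ``main obstacle'' is in fact trivial: from the energy estimate $\fiint H(z,|\na\eta|)\le cH(0,\la)$, the comparability $a(z)\ge a_0/2$, and $H(0,\la)\le 2a_0\la^q$, one divides through by $a_0$. There is no need for the Gehring-type iteration of Lemma~\ref{lem35}, and, critically, the constants one obtains this way are $c(n,N,p,q,\nu,L)$. Your proposed iteration would produce $c(\data_\delta)$, i.e., a constant involving $K$; carried through the energy chain into the rescaled bound $\fiint_{Q_{2V}}\varphi_\la(|\na v_\la|)\,dz$ and into the Lipschitz theorem of \cite{MR3906361}, this would give $\sup|\na v_\la|\le c(\data_\delta)$, not the $c(n,p,q,\nu,L)$ claimed in the statement. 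Your closing remark that the universal normalisation $\tfrac{\la^p}{H(0,\la)}+a_s\tfrac{\la^q}{H(0,\la)}\simeq1$ alone gives the clean constant is not correct: that controls the \emph{shape} of the convex function $\varphi_\la$, but the Lipschitz bound also needs the integral $\fiint\varphi_\la(|\na v_\la|)$ with a universal constant, and that is precisely what your $L^q$ estimate would contaminate.

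Two further technical points. First, you state the analogue of Lemma~\ref{lem42} as $\varrho^\alpha\la^q\le c\rho_0^{\alpha\varepsilon_0/(1+\varepsilon_0)}H(0,\la)$, but the paper's Lemma~\ref{lem56} proves the stronger $\varrho^\alpha\la^q\le c\rho_0^{\alpha\varepsilon_0/(1+\varepsilon_0)}\la^p$, using the arithmetic $q-\tfrac{2\alpha}{n+2}\le p$ together with $a_0^{-1}\le\la^{q-p}$. The stronger form (with $\la^p$ rather than $H(0,\la)$ on the right) is what controls the oscillation $[a]_\alpha\varrho^\alpha\la^q$ and underlies the comparability. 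Second, the paper's reordering (freeze $a$ before $b$) is what lets it invoke the classical Kinnunen--Lewis higher integrability for the genuinely $q$-Laplace type system satisfied by $\eta_\la$, yielding $\varepsilon_0=\varepsilon_0(n,N,q,\nu,L)$ rather than the $\varepsilon_\delta(\data_\delta)$ you would get from applying Theorem~\ref{higher} to $\zeta_\la$. Both reordering and Lemma~\ref{lem53} are needed to make the constant dependency in the statement come out right.
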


Again we may assume $w=0$ and write $a_0=a(0)$, $\la=\la_w$ and $\rho=\varrho_w$ and assumptions in the above proposition by $K^2\la^p\le a_0\la^q,$
\begin{align}\label{3116}     
      \fiint_{G_{5V\rho}^\la}\left(H(z,|\na u|)+\delta^{-1}H(z,|F|)\right)\,dz< H(0,\la)
\end{align}
and
\begin{align}\label{3120}
    \fiint_{G_{\rho}^\la}\left(H(z,|\na u|)+\delta^{-1}H(z,|F|)\right)\,dz= H(0,\la).
\end{align}
We recall $V=9K$ and
  \[K=180(1+[a]_\alpha)\left(\frac{1}{|B_1|}\iint_{Q_{2\rho_0}(z_0)}\left(H(z,|\na u|)+\delta^{-1}H(z,|F|)\right)\,dz+1\right)^\frac{\alpha}{n+2}.\]
The next lemma proves that $a(\cdot)$ is comparable in $Q_{5V\rho}$. As a consequence, \eqref{11} is a type of $(p,q)$-Laplace system in $G_{5V\rho}^\lambda$.
\begin{lemma}\label{lem53}
There holds
 \[[a]_{\alpha}(5V\rho)^\alpha<\inf_{z\in Q_{5V\rho}}a(z).\]
 Moreover, we have $\tfrac{a(z)}{2}\le a(0)\le 2a(z)$ for all $z\in Q_{5V\rho}$.
\end{lemma}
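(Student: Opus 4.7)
The plan is to use the stopping time equality \eqref{3120} together with the specific exponent $\alpha/(n+2)$ in the definition of $K$ to convert the $(p,q)$-intrinsic condition $K^2\la^p<a_0\la^q$ into a quantitative smallness bound on $[a]_{\alpha}(5V\rho)^{\alpha}$ relative to $a_0$. Both claims will then follow from the H\"older condition \eqref{16}.

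First, I would exploit the fact that the time-scaling in $G_\rho^\la$ is chosen so that the identity $|G_\rho^\la|\cdot H(0,\la)=2|B_1|\rho^{n+2}\la^2$ holds. Combining this with \eqref{3120} and the inclusion $G_\rho^\la\subset Q_{2\rho_0}(z_0)$ from the hypothesis of Proposition~\ref{prop2} gives
\[
2|B_1|\rho^{n+2}\la^2=\iint_{G_\rho^\la}\bigl(H(z,|\na u|)+\delta^{-1}H(z,|F|)\bigr)\,dz\le \iint_{Q_{2\rho_0}(z_0)}\bigl(H(z,|\na u|)+\delta^{-1}H(z,|F|)\bigr)\,dz.
\]
Recognising the right-hand side from the definition of $K$ and taking $\alpha/(n+2)$-th powers (dropping the harmless factor $2^{\alpha/(n+2)}\ge 1$) yields the scale bound $\rho^\alpha\la^{2\alpha/(n+2)}\le K/(180(1+[a]_\alpha))$.

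Next, I would plug this into $[a]_{\alpha}(5V\rho)^{\alpha}=[a]_{\alpha}(45K)^{\alpha}\rho^{\alpha}$ and simplify via $[a]_{\alpha}/(1+[a]_{\alpha})\le 1$ and $45^\alpha\le 45$ to obtain $[a]_{\alpha}(5V\rho)^{\alpha}\le K^{1+\alpha}/(4\la^{2\alpha/(n+2)})$. The $(p,q)$-intrinsic hypothesis combined with the coupling condition $q\le p+2\alpha/(n+2)$ from \eqref{15} gives $a_0>K^2\la^{p-q}\ge K^2\la^{-2\alpha/(n+2)}$, so the previous bound rewrites as $[a]_{\alpha}(5V\rho)^{\alpha}\le K^{\alpha-1}a_0/4\le a_0/4$, using $K\ge 180\ge 1$ and $\alpha\in(0,1]$.

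For the second claim, I would apply \eqref{16} in the $x$ and $t$ variables separately, using $|t|^{\alpha/2}\le (5V\rho)^{\alpha}$ on $Q_{5V\rho}$, to obtain $|a(z)-a_0|\le 2[a]_{\alpha}(5V\rho)^{\alpha}\le a_0/2$ for every $z\in Q_{5V\rho}$. Rearranging gives $a_0/2\le a(z)\le 3a_0/2$, which immediately yields $a(z)/2\le a_0\le 2a(z)$. The first claim then follows since $\inf_{Q_{5V\rho}}a\ge a_0/2>a_0/4\ge[a]_{\alpha}(5V\rho)^{\alpha}$. The main point to set up correctly is the dimensional interplay: the factor $\la^{2\alpha/(n+2)}$ coming from the intrinsic time-scaling is precisely what cancels the factor $\la^{p-q}$ produced by the $(p,q)$-intrinsic hypothesis via \eqref{15}, so the exponents $\alpha/(n+2)$ in the definition of $K$ and $2\alpha/(n+2)$ in \eqref{15} are perfectly calibrated and nothing in the argument is redundant.
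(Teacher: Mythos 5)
Your argument is correct and relies on exactly the same ingredients as the paper's proof: the stopping-time equality \eqref{3120} together with $|G_\rho^\la|H(0,\la)=2|B_1|\rho^{n+2}\la^2$ to obtain $\rho^\alpha\la^{2\alpha/(n+2)}\le K/(180(1+[a]_\alpha))$, the $(p,q)$-intrinsic hypothesis, and the exponent coupling $q\le p+\frac{2\alpha}{n+2}$ to cancel the powers of $\la$. The only cosmetic difference is that you run the estimate directly (bounding $[a]_\alpha(5V\rho)^\alpha\le a_0/4$ and deriving both conclusions from that) whereas the paper proves the first claim by contradiction and then derives the comparability of $a$; the underlying computation is identical.
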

\begin{proof}
  We prove the first statement by using contradiction.
  Assume on the contrary, then
  \begin{align}\label{521}
      \inf_{w\in Q_{5V\rho}}a(z)\le [a]_{\alpha}(5V\rho)^\alpha=[a]_\alpha(45K\rho)^\alpha\le 45[a]_\alpha \rho^\alpha K.
  \end{align}
  We observe
  \begin{align}\label{547}
      a_0\le \sup_{w\in Q_{5V\rho}}a(z)\le \inf_{w\in Q_{5V\rho}}a(z)+[a]_{\alpha}(5V\rho)^\alpha \le 90[a]_\alpha \rho^\alpha K.
  \end{align}
   On the other hand, it follows from \eqref{3120} and $H(0,\la)<2a_0\la^q$ that 
   \begin{align*}
       \begin{split}
           &a_0\la^q
           \le \fiint_{G_{\rho}^\la}\left(H(z,|\na u|)+\delta^{-1}H(z,|F|)\right)\,dz\\
           &\qquad=\frac{H(0,\la)}{2|B_1|\la^2}\rho^{-(n+2)}\iint_{G_{\rho}^\la}\left(H(z,|\na u|)+\delta^{-1}H(z,|F|)\right)\,dz\\
           &\qquad\qquad\le a_0\la^{q-2}\rho^{-(n+2)}\frac{1}{|B_1|}\iint_{Q_{2\rho_0}(z_0)}\left(H(z,|\na u|)+\delta^{-1}H(z,|F|)\right)\,dz.
       \end{split}
   \end{align*}
   Dividing both sides into $a_0\la^{q-2}\rho^{-(n+2)}$, we have
   \[\rho^{n+2}\la^2\le \frac{1}{|B_1|}\iint_{Q_{2\rho_0}(z_0)}\left(H(z,|\na u|)+\delta^{-1}H(z,|F|)\right)\,dz.\]
   Recalling $K$, we obtain
   \begin{align}\label{3127}
   \begin{split}
       &\rho^{\alpha}\la^\frac{2\alpha}{n+2}
       \le \left(\frac{1}{|B_1|}\iint_{Q_{2\rho_0}(z_0)}\left(H(z,|\na u|)+\delta^{-1}H(z,|F|)\right)\,dz\right)^\frac{\alpha}{n+2}\\
       &\qquad\le \frac{1}{180(1+[a]_{\alpha})}K.
   \end{split}
   \end{align}
    Consequently $K^2\la^p<a_0\la^q$, \eqref{547} and \eqref{3127} lead to
    \[K^2\la^p<a_0\la^q\le 90[a]_\alpha \rho^\alpha K\la^q\le 90[a]_\alpha\rho^\alpha \la^{\frac{2\alpha}{n+2}} K\la^p \le \frac{1}{2}K^{2}\la^p.\]
    It is a contradiction and \eqref{521} is false. The second statement follows from the first statement since
    \[\sup_{z\in Q_{5V\rho}}a(z)\le \inf_{z\in Q_{5V\rho}}a(z)+[a]_{\alpha}(5V\rho)^\alpha\le 2\inf_{z\in Q_{5V\rho}}a(z).\]
    The proof is completed.
\end{proof}

\begin{lemma}\label{lem56}
    Suppose $c=c(\data_\delta,\|a\|_{\infty},\|H(z,|F|)\|_{1+\varepsilon_0})$ is a constant. Then there exists $\rho_0=\rho_0(\data_\delta,\|a\|_{\infty},\|H(z,|F|)\|_{1+\varepsilon_0},\ep)\in(0,1)$ such that 
    \[ c\rho^\alpha\la^q\le \frac{1}{(4V)^{n+2}2^{2q}3}\epsilon\la^p.\]
\end{lemma}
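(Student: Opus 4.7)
The plan is to follow the blueprint of Lemma~\ref{lem42}, adapted to the $(p,q)$-intrinsic geometry in which the volume of $G_\rho^\la$ carries $H(0,\la)$ in its denominator. First I would invoke Theorem~\ref{higher} together with the uniform bound $\iint_{C_R}(H(z,|F|))^{1+\varepsilon_0}\,dz\le \|H(z,|F|)\|_{1+\varepsilon_0}^{1+\varepsilon_0}$ and the containments $G_{V\rho}^\la\subset G_{5V\rho}^\la\subset Q_{2\rho_0}(z_0)\subset C_{R/2}$, to derive (exactly as in \eqref{313} within Lemma~\ref{lem42}) the absolute bound
\[\iint_{G_{V\rho}^\la}\bigl(H(z,|\na u|)+\delta^{-1}H(z,|F|)\bigr)^{1+\varepsilon_0}\,dz\le c(\data_\delta,\|a\|_{\infty},\|H(z,|F|)\|_{1+\varepsilon_0}).\]

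Next I would apply H\"older's inequality to the stopping time identity \eqref{3120}. Exploiting $G_\rho^\la\subset G_{V\rho}^\la$ together with the volume ratio $|G_{V\rho}^\la|/|G_\rho^\la|=V^{n+2}$, this yields
\[H(0,\la)\le c(\data_\delta)\left(\fiint_{G_{V\rho}^\la}\bigl(H(z,|\na u|)+\delta^{-1}H(z,|F|)\bigr)^{1+\varepsilon_0}\,dz\right)^{\frac{1}{1+\varepsilon_0}}.\]
Raising to the power $1+\varepsilon_0$ and using the key volume identity $|G_{V\rho}^\la|=V^{n+2}\cdot 2|B_1|\rho^{n+2}\la^2/H(0,\la)$, one factor of $H(0,\la)$ cancels and the estimate collapses into
\[\la^2\rho^{n+2}H(0,\la)^{\varepsilon_0}\le c(\data_\delta,\|a\|_{\infty},\|H(z,|F|)\|_{1+\varepsilon_0}).\]
Inserting the elementary lower bound $H(0,\la)\ge \la^p$ and solving for $\la^{q-p}$ then yields $\la^{q-p}\le c\,\rho^{-(n+2)(q-p)/(p\varepsilon_0+2)}$.

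The closing step multiplies by $\rho^\alpha$ and invokes the structural constraint $q\le p+\tfrac{2\alpha}{n+2}$ from \eqref{15} to verify
\[\alpha-\frac{(n+2)(q-p)}{p\varepsilon_0+2}\ge \frac{\alpha p\varepsilon_0}{p\varepsilon_0+2}>0,\]
so that $\rho^\alpha\la^q\le c\,\rho_0^{\alpha p\varepsilon_0/(p\varepsilon_0+2)}\,\la^p$. Because the exponent of $\rho_0$ is strictly positive, a final choice of $\rho_0=\rho_0(\data_\delta,\|a\|_{\infty},\|H(z,|F|)\|_{1+\varepsilon_0},\epsilon)$ small enough absorbs the accumulated prefactor $c$ and the explicit numerical constant $(4V)^{n+2}2^{2q}3$, delivering the claim.

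The main obstacle compared with the $p$-intrinsic Lemma~\ref{lem42} is the coupling of $|G_\rho^\la|$ with $H(0,\la)$: in this regime the coefficient $a_0$ may be arbitrarily small even though $a_0\la^q\ge K^2\la^p$ holds, so any attempt to split $H(0,\la)\asymp a_0\la^q$ early and invert $a_0$ fails. The remedy is to keep $H(0,\la)$ as an opaque single symbol throughout the H\"older/volume manipulation and substitute the trivial $H(0,\la)\ge \la^p$ only at the very end; this is precisely what allows the exponent $\alpha-(n+2)(q-p)/(p\varepsilon_0+2)$ to come out strictly positive thanks to \eqref{15}.
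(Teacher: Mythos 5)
Your proof is correct, and it follows the same skeleton as the paper's argument (Theorem~\ref{higher} to get an absolute bound on $\iint_{G_{V\rho}^\la}(\cdot)^{1+\varepsilon_0}$, H\"older's inequality applied to the stopping-time identity \eqref{3120}, then the exponent bookkeeping via \eqref{15} and a small choice of $\rho_0$), but the middle algebra is genuinely different. The paper immediately replaces $H(0,\la)$ by $a_0\la^q$ using $H(0,\la)<2a_0\la^q$, writes the measure of $G_\rho^\la$ through $a_0\la^{q-2}/\rho^{n+2}$, and then removes the resulting power of $a_0$ via $a_0^{-1}\le\la^{q-p}$, which is exactly where the $(p,q)$-intrinsic hypothesis $K^2\la^p<a_0\la^q$ enters; the final exponent count is done with $\theta=\tfrac{\alpha q}{n+2}$ and yields the gain $\rho_0^{\alpha\varepsilon_0/(1+\varepsilon_0)}$. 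You instead keep $H(0,\la)$ intact until the cancellation $\la^2\rho^{n+2}H(0,\la)^{\varepsilon_0}\le c$ and only then insert the trivial bound $H(0,\la)\ge\la^p$, obtaining the gain $\rho_0^{\alpha p\varepsilon_0/(p\varepsilon_0+2)}$; this never uses the case hypothesis $K^2\la_w^p<a(w)\la_w^q$ at all, so your version is marginally more general (it would also apply verbatim in the $p$-intrinsic regime with $G$-cylinders) at the cost of a slightly smaller exponent of $\rho_0$, which is immaterial since only positivity is needed. One remark on your closing commentary: it is not accurate that splitting $H(0,\la)\asymp a_0\la^q$ early and inverting $a_0$ ``fails'' when $a_0$ is small --- this is precisely what the paper does, and it is rescued by the intrinsic-case inequality $a_0\ge K^2\la^{p-q}\ge\la^{p-q}$; your route simply avoids needing that input.
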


\begin{proof}
The proof is analogous to the proof in Lemma~\ref{lem42}. Recalling $G_{5V\rho}^\la\subset C_{R/2}$ and applying Theorem~\ref{higher}, there exist $c(\data,\|a\|_{\infty},\|H(z,|F|)\|_{1+\varepsilon_0})$ and $\varepsilon_0(\data)\in (0,1)$ such that 
   \[\iint_{G_{V\rho}^\la}(H(z,|\na u|))^{1+\varepsilon_0}\,dz\le c.\]
Therefore we observe from \eqref{3120}, H\"older's inequality and $H(0,\la)<2a_0\la^q$ that
    \begin{align*}
        \begin{split}
            &a_0\la^q\le H(0,\lambda)
            \le \left(\fiint_{G_{\rho}^\la}(H(z,|\na u|)+\delta^{-1}H(z,|F|))^{1+\varepsilon_0}\,dz\right)^\frac{1}{1+\varepsilon_0}\\
            &\qquad\le c(\data_\delta)\left(\fiint_{G_{V\rho}^\la}(H(z,|\na u|)+H(z,|F|))^{1+\varepsilon_0}\,dz\right)^\frac{1}{1+\varepsilon_0}\\
            &\qquad\qquad\le c(\data_\delta,\|a\|_{\infty},\|H(z,|F|)\|_{1+\varepsilon_0})\left(\frac{a_0\la^{q-2}}{|B_1|\rho^{n+2}}\right)^\frac{1}{1+\varepsilon_0}.
        \end{split}
    \end{align*}
    Also, using the fact that $a_0^{-1}\le \la^{q-p}$, we get
    \[\la^q\le c\rho^{-\frac{n+2}{1+\varepsilon_0}}\la^{\frac{\varepsilon_0(q-p)}{1+\varepsilon_0}+\frac{q-2}{1+\varepsilon_0}}=c\rho^{-\frac{n+2}{1+\varepsilon_0}}\la^{\frac{\varepsilon_0(2-p)}{1+\varepsilon_0}+q-2}\]
for $c=c(\data_\delta,\|a\|_{\infty},\|H(z,|F|)\|_{1+\varepsilon_0})$. Setting $\theta=\tfrac{\alpha q }{n+2}$, it follows from the above inequality that
\[\rho^\alpha\la^q
        =\rho^\alpha \la^{\theta}\la^{q-\theta}\le c\rho^{\alpha-\frac{(n+2)\theta }{(1+\varepsilon_0)q}}\la^{\frac{\theta}{q}\left(\frac{\varepsilon_0(2-p)}{1+\varepsilon_0}+q-2\right)+q-\theta}.\]
 Note that
    \[\frac{(n+2)\theta}{(1+\varepsilon_0)q}=\frac{\alpha}{1+\varepsilon_0}\quad\text{and}\quad\frac{\theta(q-2)}{q}+q-\theta=q-\frac{2\alpha}{n+2}\le p.\]
    Thus we conclude
    \[\rho^\alpha\la^q\le c(\data_\delta,\|a\|_{\infty},\|H(z,|F|)\|_{1+\varepsilon_0})\rho_0^{\frac{\alpha\varepsilon_0}{1+\varepsilon_0}}\la^p.\]
    The proof is completed by taking $\rho_0$ sufficiently small.
\end{proof}

We will obtain comparison estimates as in the $p$-intrinsic case. A different scaling argument is required since the scaling factor has changed.

Let $\zeta\in C(J_{4V\rho}^\la;L^2(B_{4V\rho},\RR^N))\cap L^q(J_{4V\rho}^\la;W^{1,q}(B_{4V\rho},\RR^N))$ be the weak solution to
\[\begin{cases}
        \zeta_t-\dv (b\mA(z,\na \zeta))=0&\text{in}\quad G_{4V\rho}^{\la},\\
        \zeta=u&\text{on}\quad\pa_p G_{4V\rho}^{\la}.
    \end{cases}\]

\begin{lemma}\label{lem38}
    There exists $\delta=\delta(\data,\epsilon)$ and $\rho_0=\rho_0(\data_\delta,\|H(z,|F|)\|_{1+\varepsilon_0},\epsilon)$ such that
    \[\frac{1}{|G_{\rho}^\lambda|}\iint_{G_{V\rho}^\la} H(z,|\na u-\na \zeta|)\,dz\le \frac{1}{2^{q} 3}\epsilon H(0,\la).\]
    Also, there exits $c=c(n,N,p,q,\nu,L)$ such that
    \[\fiint_{G_{4V\rho}^\la} H(z,|\na\zeta|)\,dz\le cH(0,\la).\]
\end{lemma}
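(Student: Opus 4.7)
The plan is to follow the $(p,q)$-intrinsic counterpart of the Steklov-average energy argument used to prove Lemma~\ref{lem32}, with the only nontrivial adjustments coming from the different cylinder scaling and from the smallness selection of $\delta$ and $\rho_0$. The function $u-\zeta$ vanishes on the parabolic boundary of $G_{4V\rho}^{\la}$ and solves
\[ \pa_t(u-\zeta) - \dv\bigl[b(\mA(z,\na u)-\mA(z,\na \zeta))\bigr] = -\dv \mA(z,F) \quad\text{in}\quad G_{4V\rho}^{\la}. \]
First I would test this equation against $[u-\zeta]_h\,\zeta_{\tau_1,\tau_2}^{\vartheta}$, pass to the limits $h,\vartheta\to 0^+$ exactly as in Lemma~\ref{lem32}, apply the monotonicity inequality from \cite[Chapter 1, Lemma 4.4]{MR1230384} together with the ellipticity \eqref{12}, and use Young's inequality on $\mA(z,F)\cdot\na(u-\zeta)$ to absorb the $H(z,|\na u-\na\zeta|)$ contribution arising from the source term. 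This produces the basic energy bound
\[ \iint_{G_{4V\rho}^{\la}} H(z,|\na u-\na \zeta|)\,dz \le c\iint_{G_{4V\rho}^{\la}} H(z,|F|)\,dz, \]
with $c=c(n,N,p,q,\nu,L)$.

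From this the second inequality of the lemma drops out almost immediately. Dividing by $|G_{4V\rho}^{\la}|$, noting $|G_{5V\rho}^{\la}|/|G_{4V\rho}^{\la}|=(5/4)^{n+2}$, and applying \eqref{3116} with $\delta\le 1$ gives $\fiint_{G_{4V\rho}^{\la}} H(z,|\na u-\na\zeta|)\,dz\le c H(0,\la)$; the same device applied to $\fiint_{G_{4V\rho}^{\la}} H(z,|\na u|)\,dz$ using \eqref{3116} (now for the $H(z,|\na u|)$ term) combined with the triangle inequality $H(z,|\na\zeta|)\le 2^q H(z,|\na u|) + 2^q H(z,|\na u-\na \zeta|)$ closes it.

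For the first inequality I would multiply the energy estimate by $|G_{4V\rho}^{\la}|/|G_\rho^{\la}| = (4V)^{n+2}$, combine with the inclusion $G_{V\rho}^{\la}\subset G_{4V\rho}^{\la}$, and apply \eqref{3116} in its averaged form to reach
\[ \frac{1}{|G_\rho^{\la}|}\iint_{G_{V\rho}^{\la}} H(z,|\na u-\na\zeta|)\,dz \le c\,K^{n+2}\delta\,H(0,\la), \]
after absorbing numerical factors into $c$ and recalling $V=9K$. The main obstacle is the choice of $\delta$ and $\rho_0$ ensuring $c K^{n+2}\delta\le \frac{1}{2^q\cdot 3}\epsilon$. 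Since $K$ itself carries a $\delta^{-1}$ weight on $H(z,|F|)$ inside its defining average, the direct smallness comes from $\delta^{(1-\alpha)/\alpha}$ when $\alpha\in(0,1)$, but disappears at the endpoint $\alpha=1$. I would handle this endpoint exactly as at the end of Lemma~\ref{lem32}: replace the $L^1$ norm of $H(z,|F|)$ over $Q_{2\rho_0}(z_0)$ by $|Q_{2\rho_0}|^{\varepsilon_0/(1+\varepsilon_0)}\|H(z,|F|)\|_{1+\varepsilon_0}$ via H\"older's inequality and then shrink $\rho_0$, which is precisely what forces the dependence $\rho_0=\rho_0(\data_\delta,\|H(z,|F|)\|_{1+\varepsilon_0},\epsilon)$ while keeping $\delta=\delta(\data,\epsilon)$.
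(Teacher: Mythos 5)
Your proposal is correct and follows essentially the same route as the paper: the paper's proof of Lemma~\ref{lem38} is exactly the Steklov-average energy estimate of Lemma~\ref{lem32} adapted to $G_{4V\rho}^{\la}$ with boundary datum $u$, combined with \eqref{3116} to get $c\delta H(0,\la)$, the measure-ratio factor $(4V)^{n+2}$ yielding $cK^{n+2}\delta H(0,\la)$, and the same $\alpha\in(0,1)$ versus $\alpha=1$ smallness selection of $\delta$ and $\rho_0$ (via H\"older and $\|H(z,|F|)\|_{1+\varepsilon_0}$) as at the end of Lemma~\ref{lem32}. Your handling of the second inequality via the triangle inequality and \eqref{3116} also matches what the paper leaves implicit.
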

\begin{proof}
    The proof is analogous to the proof of Lemma~\ref{lem32}, Applying energy estimate and \eqref{3116}, there exists $c=c(n,N,p,q,\nu,L)$ such that
    \begin{align}\label{3149}
        \fiint_{G_{4V\rho}^\la}H(z,|\na u-\na\zeta|)\,dz
        \le c\delta H(0,\lambda)
    \end{align}
   and therefore 
    \[\fiint_{G_{4V\rho}^\la}H(z,|\na \zeta|)\,dz \le cH(0,\lambda).\]
    The proof of the second statement is completed. To prove the first statement, it is necessary to estimate further \eqref{3149}. There holds
    \[\frac{1}{|G_\rho^\lambda|}\iint_{G_{4V\rho}^\la}H(z,|\na u-\na \zeta|)\,dz\le cK^{n+2}\delta H(0,\lambda).\]
    The conclusion follows as in the same argument in the proof of Lemma~\ref{lem32}. Since the calculations are repeated, we omit the details.
\end{proof}

We next consider the weak solution $\eta\in C(J^\la_{4V\rho};L^2(B_{4V\rho},\RR^N))\cap L^q(J_{4V\rho};W^{1,q}(B_{4V\rho},\RR^N))$ to 
\begin{align}\label{3141}
    \begin{cases}
        \eta_t-\dv(b\mA(0,\na \eta))=0&\text{in}\quad G_{4V\rho}^\la,\\
        \eta=\zeta&\text{on}\quad \pa_p G_{4V\rho}^\la.
    \end{cases}
\end{align}

\begin{lemma}\label{lem39}
   There exists $\rho_0=\rho_0(\data_\delta,\ep)\in(0,1)$ such that
   \[\frac{1}{|G_{\rho}^\la|}\iint_{G_{V\rho}^\la} H(z,|\na \zeta-\na \eta|)\,dz\le \frac{1}{2^{2q}3}\epsilon H(0,\la).\]
   Also, there exists $c=c(n,N,p,q,\nu,L)$ such that
   \[\fiint_{G_{4V\rho}^\la}H(0,|\na \eta|)\,dz\le cH(0,\la).\]
\end{lemma}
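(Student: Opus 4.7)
The strategy mirrors that of Lemma~\ref{lem34}, with the H\"older coefficient $a(\cdot)$ now playing the role that the VMO coefficient $b(\cdot)$ played in the $p$-intrinsic case. Since $\eta$ weakly solves \eqref{3141} with $a$ frozen at the origin, the difference $\zeta-\eta$ satisfies
\[
\pa_t(\zeta-\eta)-\dv\bigl(b\bigl(\mA(0,\na\zeta)-\mA(0,\na\eta)\bigr)\bigr)=\dv\bigl(b(a(z)-a_0)|\na\zeta|^{q-2}\na\zeta\bigr)\quad\text{in}\quad G_{4V\rho}^\la,
\]
so the error is driven purely by the H\"older oscillation of $a$ multiplied by the $q$-Laplace part of $\na\zeta$.

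First I would test this equation with the Steklov regularization $[\zeta-\eta]_h\zeta_{\tau_1,\tau_2}^\vartheta$ exactly as in the proofs of Lemma~\ref{lem32} and Lemma~\ref{lem34}, and let $h,\vartheta\to 0^+$. Combining the monotonicity inequality for the frozen operator $\mA(0,\cdot)$ via \cite[Chapter~1, Lemma~4.4]{MR1230384}, the ellipticity \eqref{12} of $b$, and Lemma~\ref{lem53} (which yields $H(z,\cdot)\simeq H(0,\cdot)$ pointwise on $G_{5V\rho}^\la$) on the left-hand side, together with Young's inequality and absorption on the right-hand side, I obtain a comparison estimate of the form
\[
\fiint_{G_{4V\rho}^\la}H(z,|\na\zeta-\na\eta|)\,dz\le c\fiint_{G_{4V\rho}^\la}|a(z)-a_0|\,|\na\zeta|^q\,dz,
\]
where the absorption of the mixed term $\tfrac14|a(z)-a_0|\,|\na\zeta-\na\eta|^q$ uses $|a(z)-a_0|\le 2a_0$ from Lemma~\ref{lem53}.

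To control the right-hand side, I apply the H\"older continuity \eqref{16} to get $|a(z)-a_0|\le c[a]_\alpha(V\rho)^\alpha$ on $G_{4V\rho}^\la$; for the temporal piece one bounds $(\tfrac{\la^2}{H(0,\la)}(V\rho)^2)^{\alpha/2}$ using $H(0,\la)\ge a_0\la^q$ together with the lower bound $a_0\gtrsim[a]_\alpha(V\rho)^\alpha$ of Lemma~\ref{lem53}. Then the second estimate of Lemma~\ref{lem38} gives $a_0\fiint|\na\zeta|^q\,dz\le \fiint H(z,|\na\zeta|)\,dz\le cH(0,\la)\le 2ca_0\la^q$, producing the bound $c\rho^\alpha\la^q$. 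Lemma~\ref{lem56} then absorbs the inflation factor $K^{n+2}$ arising from restricting $G_{4V\rho}^\la$ to $G_\rho^\la$, yielding the required smallness $\tfrac{1}{2^{2q}3}\epsilon H(0,\la)$ once $\rho_0$ is chosen sufficiently small. The second assertion then follows from the triangle inequality $H(0,|\na\eta|)\le 2^qH(0,|\na\eta-\na\zeta|)+2^qH(0,|\na\zeta|)$, Lemma~\ref{lem53}, and the first assertion combined with Lemma~\ref{lem38}.

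The main technical obstacle I anticipate is the temporal H\"older contribution, since the intrinsic cylinder $G_\rho^\la$ has the non-standard time scale $\tfrac{\la^2}{H(0,\la)}\rho^2$ rather than $\rho^2$. One must verify that the $(p,q)$-intrinsic regime $K^2\la^p\le a_0\la^q$ combined with Lemma~\ref{lem53} keeps the extra factor $(\tfrac{\la^2}{H(0,\la)})^{\alpha/2}$ controllable by a constant depending only on $\data_\delta$, so that only the spatial power $\rho^\alpha$ ultimately drives the smallness, which is then absorbed through Lemma~\ref{lem56}.
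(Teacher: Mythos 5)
Your overall strategy is exactly the paper's: test the difference equation for $\zeta-\eta$ with the Steklov--regularized test function $[\zeta-\eta]_h\zeta_{\tau_1,\tau_2}^\vartheta$, use monotonicity of the frozen operator $\mA(0,\cdot)$ and \eqref{12} on the left, Young's inequality plus the comparability $\tfrac{a(z)}{2}\le a_0\le 2a(z)$ from Lemma~\ref{lem53} to absorb the term $\tfrac{|a(z)-a_0|}{4}|\na\zeta-\na\eta|^q$ into $a_0|\na\zeta-\na\eta|^q$, then bound the remaining term by $c\,\rho^\alpha\fiint_{G_{4V\rho}^\la}|\na\zeta|^q\,dz$, control $\fiint_{G_{4V\rho}^\la}|\na\zeta|^q\,dz\le c\la^q$ via Lemma~\ref{lem38}, Lemma~\ref{lem53} and $H(0,\la)\le 2a_0\la^q$, and finally invoke Lemma~\ref{lem56} for the smallness; the second assertion follows as you indicate. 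This is the paper's proof.

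The one step that would fail as written is your treatment of the temporal oscillation of $a$. Bounding $\bigl(\tfrac{\la^2}{H(0,\la)}(V\rho)^2\bigr)^{\alpha/2}$ via $H(0,\la)\ge a_0\la^q$ together with $a_0\gtrsim[a]_\alpha(V\rho)^\alpha$ does \emph{not} give $|a(z)-a_0|\le c[a]_\alpha(V\rho)^\alpha$; it gives only $|a(z)-a_0|\le c[a]_\alpha^{1-\alpha/2}(V\rho)^{\alpha-\alpha^2/2}$, i.e.\ you lose a factor $\rho^{\alpha^2/2}$. That loss is not affordable: the absorption in Lemma~\ref{lem56} uses the full exponent, since its proof rests on the identity $q-\tfrac{2\alpha}{n+2}\le p$, which has no slack at the borderline $q=p+\tfrac{2\alpha}{n+2}$; with the degraded power $\rho^{\alpha-\alpha^2/2}$ the quantity $\rho^{\alpha-\alpha^2/2}\la^q$ can no longer be forced below $\epsilon\la^p$ by shrinking $\rho_0$ unless $\varepsilon_0$ is unrealistically large. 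The correct (and simpler) bound is the one implicitly used in the paper: since $H(0,\la)\ge\la^p$, $\la\ge1$ and $p\ge2$, the time length of $G_{4V\rho}^\la$ satisfies $\tfrac{\la^2}{H(0,\la)}(4V\rho)^2\le\la^{2-p}(4V\rho)^2\le(4V\rho)^2$, so the parabolic H\"older condition \eqref{16} gives directly $|a(z)-a_0|\le 2[a]_\alpha(4V\rho)^\alpha$ on $G_{4V\rho}^\la$, preserving the full power $\rho^\alpha$ that Lemma~\ref{lem56} then absorbs. With this correction your argument coincides with the paper's.
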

\begin{proof}
   As in the proof of Lemma~\ref{lem32}, we take $[\zeta-\eta]_h\zeta_{\tau_1,\tau_2}^\vartheta$ as a test function to
   \[\pa_t[\zeta-\eta]_h-\dv[b(\mA(0,\na \zeta)-\mA(0,\na \eta))]_h=-\dv[b(\mA(0,\na \zeta)-\mA(z,\na \zeta))]_h\]
   in $B_{4V\rho}\times J_{4V\rho-h}^\la.$ Then there exists $c=c(n,p,q,\nu,L)$ such that
   \[\fiint_{G_{4V\rho}^\la}H(0,|\na \zeta-\na \eta|)\,dz\le c \fiint_{G_{4V\rho}^\la}|\mA(0,\na \zeta)-\mA(z,\na\zeta)||\na\zeta-\na \eta|\,dz.\]
   Note that Lemma~\ref{lem53} implies
   \begin{align*}
   \begin{split}
       &c|\mA(0,\na \zeta)-\mA(z,\na \zeta)||\na\zeta-\na \eta|=c|a(z)-a(0)||\na \zeta|^{q-1}|\na \zeta-\na \eta|\\
       &\qquad\le c|a(z)-a(0)||\na \zeta|^{q}+\frac{|a(z)-a(0)|}{4}|\na\zeta-\na \eta|^q\\
       &\quad\qquad\le c[a]_{\alpha}(4V\rho)^\alpha|\na \zeta|^{q}+\frac{3a(0)}{4}|\na \zeta-\na \eta|^q.
   \end{split}
   \end{align*}
    Absorbing the second term on the left-hand side of the energy estimate, we obtain
    \begin{align}\label{3159}
        \fiint_{G_{4V\rho}^\la}H(0,|\na \zeta-\na \eta|)\,dz\le c(\data_\delta)\rho^\alpha\fiint_{G_{4V\rho}^\la}|\na \zeta|^q\,dz.
    \end{align}
    Note that it follows from $H(0,\la)<2a(0)\la^q$ and Lemma~\ref{lem38} that 
    \[\fiint_{G_{4V\rho}^\la}a_0|\na \zeta|^q\,dz \le c a_0\la^q.\]
   Dividing both side into $a_0$, it follows that
   \[\fiint_{G_{4V\rho}^\la}|\na \zeta|^q\,dz \le c \la^q.\]
    Substituting the above display to \eqref{3159} and applying Lemma~\ref{lem53} and Lemma~\ref{lem56}, we get
    \[\fiint_{G_{4V\rho}^\la}H(z,|\na \zeta-\na \eta|)\,dz\le \frac{1}{(4V)^{n+2}2^{2q}3}\epsilon\la^p.\]
    Since $\la^p\le H(0,\lambda)$, the first statement of the lemma is proved.
\end{proof}
In what follows, we estimate the higher integrability for $|\na \eta|$. Since the constant depends only on $n,N,q,\nu,L$, without loss of generality, we may write $\varepsilon_0$ as an exponent of the self-improving constant.
\begin{lemma}
There exists $c=c(n,N,p,q,\nu,L)$ and $\varepsilon_0=\varepsilon_0(n,N,p,q,\nu,L)$ such that
\[\fiint_{G_{2V\rho}^\la}(H(0,|\na \eta|))^{1+\varepsilon_0}\,dz\le c(H(0,\la))^{1+\varepsilon_0}.\]
\end{lemma}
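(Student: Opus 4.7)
The plan is a scaling argument that reduces the claim to a higher integrability estimate for an autonomous parabolic system with Orlicz $(p,q)$-growth, to which the standard Caccioppoli--Sobolev--Poincar\'e--Gehring machinery applies with constants depending only on $n,N,p,q,\nu,L$.

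First I rescale using the $(p,q)$-intrinsic time, setting $\tilde\eta(x,t)=\tfrac{1}{\rho\la}\eta\bigl(\rho x, \tfrac{\la^2\rho^2}{H(0,\la)}t\bigr)$ for $(x,t)\in Q_{4V}$. A direct change of variables in the spirit of the proof of Lemma~\ref{lem33} shows that $\tilde\eta$ solves
\[\pa_t\tilde\eta -\dv\bigl(\tilde b(\alpha_p|\na\tilde\eta|^{p-2}\na\tilde\eta+\alpha_q|\na\tilde\eta|^{q-2}\na\tilde\eta)\bigr)=0\quad\text{in }Q_{4V},\]
with $\tilde b(x,t)=b\bigl(\rho x,\tfrac{\la^2\rho^2}{H(0,\la)}t\bigr)$ still obeying $\nu\le \tilde b\le L$, and with constant coefficients $\alpha_p=\la^p/H(0,\la)$, $\alpha_q=a(0)\la^q/H(0,\la)$, $\alpha_p+\alpha_q=1$. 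Introducing the $N$-function $\Psi(s)=\alpha_p s^p+\alpha_q s^q$, the identity $H(0,\la s)=H(0,\la)\Psi(s)$ together with Lemma~\ref{lem39} yields $\fiint_{Q_{4V}}\Psi(|\na\tilde\eta|)\,dz\le c$, uniformly in $\alpha_p$ and $\alpha_q$.

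Next I derive a reverse H\"older inequality for $\Psi(|\na\tilde\eta|)$ on concentric subcylinders $Q_r\subset Q_{2r}\subset Q_{4V}$. Testing the Steklov-averaged equation with $(\tilde\eta-(\tilde\eta)_{Q_{2r}})\varphi^q$ for a standard cutoff $\varphi$ and invoking the monotonicity estimate of \cite[Chapter 1, Lemma 4.4]{MR1230384} yields a Caccioppoli estimate
\[\sup_t\fint_{B_r}\frac{|\tilde\eta-(\tilde\eta)_{Q_{2r}}|^2}{r^2}\,dx + \fiint_{Q_r}\Psi(|\na\tilde\eta|)\,dz\le c\fiint_{Q_{2r}}\Psi\!\left(\frac{|\tilde\eta-(\tilde\eta)_{Q_{2r}}|}{r}\right)dz;\]
coupling it with a parabolic Sobolev--Poincar\'e inequality for the Orlicz function $\Psi$ gives the reverse H\"older inequality
\[\fiint_{Q_r}\Psi(|\na\tilde\eta|)\,dz\le c\left(\fiint_{Q_{2r}}\Psi(|\na\tilde\eta|)^{\theta}\,dz\right)^{1/\theta}\]
for some $\theta\in(0,1)$, with constants depending only on $n,N,p,q,\nu,L$. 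Gehring's self-improvement lemma then furnishes $\varepsilon_0=\varepsilon_0(n,N,p,q,\nu,L)\in(0,1)$ and $c=c(n,N,p,q,\nu,L)$ with $\fiint_{Q_{2V}}\Psi(|\na\tilde\eta|)^{1+\varepsilon_0}\,dz\le c$. Reverting the change of variables via $H(0,|\na\eta|)=H(0,\la)\Psi(|\na\tilde\eta|)$ yields the claim.

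The main obstacle is ensuring that the constants throughout the Caccioppoli, Sobolev--Poincar\'e and Gehring steps are independent of $\alpha_p$ and $\alpha_q$, hence of $a(0)$, $\la$ and $K$. This hinges on the observation that although $\alpha_p$ or $\alpha_q$ may degenerate individually, the normalized $N$-function $\Psi$ always satisfies $2^p\le \Psi(2s)/\Psi(s)\le 2^q$, so its $\Delta_2$-constants---and hence all Orlicz-type inequalities used in the argument---depend only on the exponents $p$ and $q$.
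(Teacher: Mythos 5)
Your rescaling step is exactly the paper's: the same intrinsic time factor $\tfrac{\la^2}{H(0,\la)}\rho^2$, the same normalized operator $\mA_\la(0,\xi)=\alpha_p|\xi|^{p-2}\xi+\alpha_q|\xi|^{q-2}\xi$ with $\alpha_p+\alpha_q=1$, and the identity $H(0,\la s)=H(0,\la)\Psi(s)$ used to transfer the estimate back is fine. The gap is in the second half. The homogeneous Caccioppoli and reverse H\"older inequalities you assert on standard parabolic cylinders are not obtainable (and not the form known to hold) for superquadratic growth: testing with $(\tilde\eta-(\tilde\eta)_{Q_{2r}})\varphi^q$ produces, from the time derivative, a term of the type $\fiint_{Q_{2r}}|\tilde\eta-(\tilde\eta)_{Q_{2r}}|^2/r^2\,dz$, and since for $s\le 1$ one only has $\Psi(s)\ge s^q$ (not $s^2$), this term cannot be absorbed homogeneously into $\fiint\Psi(|\tilde\eta-(\tilde\eta)_{Q_{2r}}|/r)\,dz$; it is exactly this mismatch that forces the scaling deficit and additive constants appearing in Theorem~\ref{higher} and in the Kinnunen--Lewis estimate the paper actually uses, namely $\fiint_{Q_{2V}}|\na\eta_\la|^{q(1+\varepsilon_0)}\,dz\le c\bigl(\fiint_{Q_{4V}}|\na\eta_\la|^q\,dz+1\bigr)^{1+q\varepsilon_0/2}$. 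So your clean homogeneous reverse H\"older display is false as stated, and what you are really invoking is an unproven, uncited higher integrability theorem for autonomous Orlicz growth that is uniform in $\alpha_p,\alpha_q$; the $\Delta_2$ remark alone does not deliver this, and quoting a double-phase result such as Theorem~\ref{higher} would not either, since its constant depends on $\|a_\la\|_\infty=a_0\la^{q-p}$, which in the $(p,q)$-intrinsic case is only bounded from below (by $K^2$), not from above.

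The paper closes this with one structural observation you do not use: in the $(p,q)$-intrinsic case $K^2\la^p<a_0\la^q$ with $K>1$, hence $H(0,\la)\le 2a_0\la^q$, i.e. $\alpha_q\ge\tfrac12$. Consequently $\mA_\la(0,\xi)\cdot\xi\ge\tfrac12|\xi|^q$ and $|\mA_\la(0,\xi)|\le 2^q(1+|\xi|^{q-1})$, so $\eta_\la$ solves a uniformly coercive $q$-Laplacian-type system and the classical result of Kinnunen--Lewis \cite{MR1749438} applies verbatim with constants depending only on $n,N,q,\nu,L$; the deficit exponent is harmless because Lemma~\ref{lem39} together with $H(0,\la)\le 2a_0\la^q$ normalizes the energy, $\fiint_{Q_{4V}}|\na\eta_\la|^q\,dz\le c$. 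Scaling back and treating the $p$-part of $H(0,|\na\eta|)^{1+\varepsilon_0}$ by H\"older's inequality against the $q$-integral then gives the claim. Your plan is repairable along these lines: either insert the observation $\alpha_q\ge\tfrac12$ and cite the $q$-growth theorem, or replace your homogeneous displays by their deficit versions and note that the deficit is immaterial here because the normalized energy is bounded by a universal constant; as written, however, the key intermediate inequality fails and the claimed dependence of the constants only on $n,N,p,q,\nu,L$ is not justified.
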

\begin{proof}
    We define the scaled functions and maps
    \begin{align*}
    \begin{split}
        &\eta_\la(x,t)=\tfrac{1}{\rho\la}\eta(\rho x,\tfrac{\la^{2}}{H(0,\la)}\rho^2t),\\
        &b_\la(x,t)=b(\rho x,\tfrac{\la^{2}}{H(0,\la)}\rho^2 t),\\
      &\mA_\la(0,\xi)=\tfrac{\la}{H(0,\la)}(\la^{p-1}|\xi|^{p-2}\xi +a_0\la^{q-1}|\xi|^{q-2}\xi)
    \end{split}
    \end{align*}
     for $(x,t)\in Q_{4V}$ and $\xi\in \RR^{Nn}$. Using the fact that $H(0,\la)\le 2a_0\la^q$, Lemma~\ref{lem39} and the change of variables, we have
    \begin{align}\label{3153}
    \begin{split}
        &\fiint_{Q_{4V}}|\na \eta_\la|^q\,dz
        =\fiint_{G_{4V\rho}^\la}\frac{|\na \eta|^q}{\la^q}\,dz\le \fiint_{G_{4V\rho}^\la}\frac{2a_0|\na \eta|^q}{H(0,\la)}\,dz\\
        &\qquad\le \fiint_{G_{4V\rho}^\la}\frac{2H(0,|\na \eta|)}{H(0,\la)}\,dz\le c(n,p,q,\nu,L).
    \end{split}
    \end{align}
     Moreover, the ellipticity condition \eqref{12} for $b_\lambda$ in $Q_{4V}$ holds.
    We claim that $\eta_\la$ is a weak solution to the $q$-Laplace type system.
Let $\varphi_{\la}\in C_0^\infty(Q_{4V},\RR^N)$ be arbitrary and $\varphi\in C_0^\infty(G_{4V\rho}^\la,\RR^N)$ be maps satisfying $\varphi_{\la}(x,t)=\varphi(\rho x,\tfrac{\la^2}{H(0,\lambda)}\rho^2t)$ in $Q_{4V}$.
Applying the change of variables, we obtain
\begin{align*}
    \begin{split}
        &-\fiint_{Q_{4V}}\eta_{\la}(x,t)\cdot\pa_t\varphi_{\la}(x,t)\,dz\\
        &\qquad= -\fiint_{Q_{4V}}\tfrac{\la}{H(0,\lambda)}\rho \eta(\rho x,\tfrac{\la^2}{H(0,\lambda)}\rho^2t)\cdot\pa_t\varphi(\rho x,\tfrac{\la^2}{H(0,\lambda)}\rho^2t)\,dz\\
        &\qquad\qquad= -\fiint_{G_{4V\rho}^\la}\tfrac{\la}{H(0,\lambda)}\rho \eta(x,t)\cdot\pa_t\varphi(x,t)\,dz.
    \end{split}
\end{align*}
Using the fact that $\eta$ is the weak solution to \eqref{3141}, we have
\begin{align*}
\begin{split}
    &-\fiint_{Q_{4V}}\eta_{\la}(x,t)\cdot\pa_t\varphi_{\la}(x,t)\,dz\\
    &\qquad=-\fiint_{G_{4V\rho}^\la}\tfrac{\la}{H(0,\lambda)}\rho b(z)(|\na \eta(z)|^{p-2}+a_0|\na \eta(z)|^{q-2})\na \eta(z)\cdot \na \varphi(z)\,dz.
\end{split}
\end{align*}
Again the change of variables gives
\begin{align*}
\begin{split}
     &-\fiint_{Q_{4V}}\eta_{\la}(x,t)\cdot\pa_t\varphi_{\la}\,dz\\
        &\qquad=-\fiint_{Q_{4V}} \tfrac{\la}{H(0,\lambda)}b_\la(z)(\la^{p-1}|\na \eta_\la|^{p-2}+a_0\la^{q-1}|\na \eta_{\la}(z)|^{q-2})\na \eta_{\la}(z)\cdot \na \varphi_{\la}(z)\,dz.
\end{split}
\end{align*}
We have proved that $\eta_\la$ is a weak solution to
\[\pa_t\eta_\la-\dv(b_\la\mA_{\la}(0,\na \eta_\la))=0\quad\text{in}\quad Q_{4V}.\]
We now investigate the growth condition on $\mathcal{A}_\la(0,\xi)$. Note that
     \[|\mA_\la(0,\xi)|
         \le \tfrac{\la^p}{H(0,\la)}|\xi|^{p-1}+\tfrac{a_0\la^q}{H(0,\la)}|\xi|^{q-1}\le |\xi|^{p-1}+|\xi|^{q-1}\le 2^{q}(|\xi|^{q-1}+1).\]
     For the coercivity of $\mA_\la$, we use the fact that $H(0,\la)\le 2a_0\la^q$ to observe
     \[\mA_\la(0,\xi)\cdot\xi=  \tfrac{\la^p}{H(0,\la)}|\xi|^{p}+\tfrac{a_0\la^q}{H(0,\la)} |\xi|^q\geq \tfrac{a_0\la^q}{H(0,\la)}|\xi|^q\geq\frac{1}{2}|\xi|^q.\]
    Therefore, $\mA_\la(0,\xi)$ is a $q$-Laplacian type operators.
The higher integrability results in \cite{MR1749438} states that there exist $\varepsilon_0=\varepsilon_0(n,N,q,\nu,L)$ and $c=c(n,N,q,\nu,L)$ such that
    \[\fiint_{Q_{2V}}|\na \eta_\lambda|^{q(1+\varepsilon_0)}\,dz\le c\left(\fiint_{Q_{4V}}|\na \eta_\la|^q+1\,dz\right)^{1+\frac{q\varepsilon_0}{2}}.\]
    From \eqref{3153} and change of variable, it follows
    \[\fiint_{G_{2V\rho}^\lambda}|\na \eta|^{q(1+\varepsilon_0)}\,dz\le c\la^{q(1+\varepsilon_0)}.\]
    Moreover, we deduce that
    \begin{align*}
    \begin{split}
       &\fiint_{G_{2V\rho}^\la}(H(0,|\na \eta|))^{1+\varepsilon_0}\,dz
        \le 2^2\fiint_{G_{2V\rho}^\la}\left(|\na \eta_\la|^{p(1+\varepsilon_0)}+a_0^{1+\varepsilon_0}|\na \eta_\la|^{q(1+\varepsilon_0)}\right)\,dz\\
        &\qquad\le 2^2\left(\fiint_{G_{2V\rho}^\la}|\na \eta_\la|^{q(1+\varepsilon_0)}\,dz\right)^\frac{p}{q}+\fiint_{G_{2V\rho}^\la}a_0^{1+\varepsilon_0}|\na \eta_\la|^{q(1+\varepsilon_0)}\,dz\\
        &\quad\qquad\le c(\la^{p(1+\varepsilon_0)}+a_0^{1+\varepsilon_0}\la^{q(1+\varepsilon_0)})\le  c(H(0,\la))^{1+\varepsilon_0}.
    \end{split}
    \end{align*}
    This completes the proof.
\end{proof}
We next consider the weak solution to
\[\begin{cases}
        v_t-\dv(b_0\mA(0,\na v))=0&\text{in}\quad G_{2V\rho}^\la\\
        v=\eta&\text{on}\quad G_{2V\rho}^\la,
    \end{cases}\]
where $b_0=b_{G_{2V\rho}^\la}$. The proof of the following lemma is similar to the proof of Lemma~\ref{lem34}. We omit the detail.
\begin{lemma}
    There exists $\rho_0=\rho_0(n,N,p,q,\nu,L,\epsilon)$ such that
    \[\frac{1}{|G_{\rho}^\lambda|}\iint_{G_{V\rho}^\la}H(z,|\na \eta-\na v|)\,dz\le \frac{1}{2^{2q} 3}\epsilon H(0,\la).\]
    Also, there exists $c=c(n,p,q,\nu)$ such that
    \[\fiint_{G_{2V\rho}^\la}H(0,|\na v|)\,dz\le cH(0,\la).\]
\end{lemma}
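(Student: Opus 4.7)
The plan is to mimic Lemma~\ref{lem34} in the $(p,q)$-intrinsic setting, replacing the role of Lemma~\ref{lem33} by the higher integrability just proved for $\na\eta$, and replacing the scalar inequality $H(z,\cdot)\sim H(z,\cdot)$ at different points by the comparability $\tfrac{a(z)}{2}\le a(0)\le 2a(z)$ on $Q_{5V\rho}$ from Lemma~\ref{lem53}. Since $b_0$ still satisfies the ellipticity bound~\eqref{12} and $\mA(0,\xi)$ is the same $(p,q)$-structure operator used for $\eta$, the weak solution $v$ exists in the appropriate energy class.

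First I would test the Steklov-averaged equation
\[
\pa_t[\eta-v]_h - \dv\bigl[b_0(\mA(0,\na\eta)-\mA(0,\na v))\bigr]_h = -\dv\bigl[(b_0-b)\mA(0,\na\eta)\bigr]_h
\]
against $[\eta-v]_h\,\zeta_{\tau_1,\tau_2}^\vartheta$ on $B_{2V\rho}\times J_{2V\rho-h}^\la$, pass to the limits $h,\vartheta\to0^+$ as in Lemma~\ref{lem32}, and use the monotonicity of $\xi\mapsto\mA(0,\xi)$ (here the $(p,q)$-structure with a frozen $a_0$). This yields, after a Young's inequality to split the $|b_0-b||\mA(0,\na\eta)||\na\eta-\na v|$ term and absorb the resulting $|b_0-b|H(0,|\na\eta-\na v|)$ into the left-hand side using $|b_0-b|\le 2L$, an energy estimate of the form
\[
\fiint_{G_{2V\rho}^\la} H(0,|\na\eta-\na v|)\,dz \le c\fiint_{G_{2V\rho}^\la}|b_0-b(z)|\,H(0,|\na\eta|)\,dz.
\]

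Next I would apply Hölder's inequality with exponent $1+\varepsilon_0$ (the one from the preceding higher-integrability lemma for $\na\eta$), obtaining
\[
\fiint_{G_{2V\rho}^\la}|b_0-b|\,H(0,|\na\eta|)\,dz \le \Bigl(\fiint_{G_{2V\rho}^\la}|b_0-b|^{\frac{1+\varepsilon_0}{\varepsilon_0}}\,dz\Bigr)^{\frac{\varepsilon_0}{1+\varepsilon_0}}\Bigl(\fiint_{G_{2V\rho}^\la}H(0,|\na\eta|)^{1+\varepsilon_0}\,dz\Bigr)^{\frac{1}{1+\varepsilon_0}}.
\]
The last factor is bounded by $c\,H(0,\la)$ by the previous lemma, and for the VMO factor I use $|b_0-b|\le 2L$ to reduce the exponent to $1$, then invoke the local VMO condition~\eqref{115} to make this factor arbitrarily small by choosing $\rho_0=\rho_0(\data_\delta,\varepsilon)$ small. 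Multiplying by $|G_{2V\rho}^\la|/|G_\rho^\la|=(2V)^{n+2}$ inserts a factor $K^{n+2}$, which is a $\data_\delta$ quantity and is therefore absorbed into the smallness obtained from VMO. Finally, switching from $H(0,\cdot)$ to $H(z,\cdot)$ in the integrand via Lemma~\ref{lem53} (using $G_{V\rho}^\la\subset Q_{5V\rho}$) loses only a harmless constant factor $2$, giving the first estimate.

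The second statement is routine: expand $|\na v|=|(\na v-\na\eta)+\na\eta|$, apply $H(0,a+b)\le 2^q(H(0,a)+H(0,b))$, and combine the first estimate (rescaled by the volumes) with the energy bound $\fiint_{G_{4V\rho}^\la}H(0,|\na\eta|)\,dz\le cH(0,\la)$ from Lemma~\ref{lem39}. The only real obstacle is bookkeeping the $K^{n+2}$ constant coming from the change from a full average on $G_{2V\rho}^\la$ to the scaled average on $G_\rho^\la$; this is handled exactly as in Lemma~\ref{lem32} by first fixing $\delta$ (hence $K$) and only afterwards choosing $\rho_0$ small enough in the VMO modulus.
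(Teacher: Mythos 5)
Your proposal matches the approach the paper takes: the paper explicitly states that this lemma's proof "is similar to the proof of Lemma~\ref{lem34}. We omit the detail," and your argument is precisely that transplantation to the $(p,q)$-intrinsic cylinders. Testing $\pa_t[\eta-v]_h-\dv[b_0(\mA(0,\na\eta)-\mA(0,\na v))]_h=-\dv[(b_0-b)\mA(0,\na\eta)]_h$ with $[\eta-v]_h\zeta^\vartheta_{\tau_1,\tau_2}$, absorbing via Young and $|b_0-b|\le 2L$, applying H\"older with exponent $1+\varepsilon_0$ against the higher-integrability bound $\fiint_{G_{2V\rho}^\la}(H(0,|\na\eta|))^{1+\varepsilon_0}\,dz\le c(H(0,\la))^{1+\varepsilon_0}$ from the preceding lemma, then reducing the $|b_0-b|^{(1+\varepsilon_0)/\varepsilon_0}$ factor to $|b_0-b|$ and invoking the local VMO condition \eqref{115}, and finally replacing $H(0,\cdot)$ by $H(z,\cdot)$ via the comparability $a(z)\le 2a(0)$ on $Q_{5V\rho}$ from Lemma~\ref{lem53} --- all of this is exactly what the omitted proof must do, and you have stated every step. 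The second estimate via the triangle inequality combined with $\fiint_{G_{4V\rho}^\la}H(0,|\na\eta|)\,dz\le cH(0,\la)$ from Lemma~\ref{lem39} is also the intended argument.

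One small remark: you write $\rho_0=\rho_0(\data_\delta,\varepsilon)$, and indeed the factor $|G_{V\rho}^\la|/|G_\rho^\la|=V^{n+2}=(9K)^{n+2}$ forces $\rho_0$ to depend on $K$ (a $\data_\delta$-quantity) through the smallness needed in the VMO modulus, exactly as in Lemma~\ref{lem34}. The statement in the paper lists $\rho_0=\rho_0(n,N,p,q,\nu,L,\epsilon)$, which appears to under-report this dependency; your accounting of the constants is the more careful one.
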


    For the Lipschitz regularity for $v$, we again consider a convex function defined as
    \[\varphi_\la(s)=b_0\left(\frac{\la^p}{pH(0,\la)}s^p+\frac{a_0\la^q}{qH(0,\la)}s^q\right).\]
    Then $\varphi\in C^\infty(0,\infty)$, $\varphi_{\la}(0)=0$, $\varphi_{\la}'(0)=0$ and $\lim_{s\to\infty}\varphi_{\la}(s)=\infty$.
    Moreover, it is easy to see that $\pa_{\xi}\varphi_\la(|\xi|)=\mathcal{A}_\la(\xi)$ and there exists a constant $c=c(p,q)$ such that $\tfrac{1}{c}s\varphi''_{\la}(s)\le \varphi_{\la}'(s)\le cs\varphi_{\la}''(s)$.
    Therefore, we obtain the following estimate by applying \cite[Theorem 2.2]{MR3906361}. The proof is similar to the proof of Lemma~\ref{lem37}. We again omit the details.

\begin{lemma}
    There exists $c=c(n,N,p,q,\nu,L)$ such that
    \[\sup_{z\in G_{V\rho}^\la}|\na v(z)|\le c\la.\]
\end{lemma}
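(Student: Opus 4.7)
The plan is to mimic the proof of Lemma~\ref{lem37}, but with the scaling adapted to the $(p,q)$-intrinsic geometry. First, I would introduce the scaled function $v_\la(x,t) = \tfrac{1}{\rho\la} v(\rho x, \tfrac{\la^2}{H(0,\la)}\rho^2 t)$ on $Q_{2V}$. By the same change-of-variables computation already carried out in the proof of the preceding higher integrability lemma, $v_\la$ solves $\pa_t v_\la - \dv(b_0 \mA_\la(0,\na v_\la)) = 0$ in $Q_{2V}$, where $\mA_\la(0,\xi) = \tfrac{\la}{H(0,\la)}(\la^{p-1}|\xi|^{p-2}\xi + a_0\la^{q-1}|\xi|^{q-2}\xi)$. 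The energy bound from the previous lemma transports to
\[\fiint_{Q_{2V}}\left(\tfrac{\la^p}{H(0,\la)}|\na v_\la|^p + \tfrac{a_0\la^q}{H(0,\la)}|\na v_\la|^q\right)\,dz \le c,\]
which, thanks to \eqref{12}, is equivalent to $\fiint_{Q_{2V}}\varphi_\la(|\na v_\la|)\,dz \le c$ for the convex $\varphi_\la$ defined just above the lemma.

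Next I would invoke \cite[Theorem 2.2]{MR3906361} applied to $v_\la$ with the Orlicz function $\varphi_\la$ (whose defining properties $\pa_\xi\varphi_\la(|\xi|) = \mA_\la(0,\xi)$ and $\tfrac{1}{c}s\varphi''_\la(s)\le \varphi'_\la(s)\le cs\varphi''_\la(s)$ are verified in the paragraph preceding the lemma) to obtain
\[\min\Bigl\{\sup_{Q_V}\varphi_\la(|\na v_\la|)^{n/2}|\na v_\la|^{2-n},\ \sup_{Q_V}|\na v_\la|^2\Bigr\} \le c\fiint_{Q_{2V}}\left(|\na v_\la|^2 + \varphi_\la(|\na v_\la|)\right)\,dz.\]
To eliminate the $\min$, I use the $(p,q)$-intrinsic hypothesis $K^2\la^p < a_0\la^q$, which (together with the definition of $H$) yields $H(0,\la) \le 2a_0\la^q$, hence $\tfrac{a_0\la^q}{H(0,\la)} \ge \tfrac{1}{2}$ and $\varphi_\la(s) \ge \tfrac{\nu}{2q}s^q$ for $s\ge 1$. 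Since $q\ge p\ge 2$, the exponent $nq/2 + 2 - n\ge 2$, so on $\{\sup_{Q_V}|\na v_\la|\ge 1\}$ the first term of the $\min$ dominates $c|\na v_\la|^2$ and the estimate reduces to $\sup_{Q_V}|\na v_\la|^2 \le c\fiint_{Q_{2V}}(|\na v_\la|^2 + \varphi_\la(|\na v_\la|))\,dz$; the complementary case is trivial.

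Finally I would bound the right-hand side uniformly: the same ratio inequality $\tfrac{a_0\la^q}{H(0,\la)}\ge \tfrac{1}{2}$ gives $\fiint_{Q_{2V}}|\na v_\la|^q\,dz \le 2\fiint_{Q_{2V}}\tfrac{a_0\la^q}{H(0,\la)}|\na v_\la|^q\,dz \le c$, and because $q\ge 2$ this controls $\fiint_{Q_{2V}}|\na v_\la|^2\,dz \le c$. Combined with the energy bound on $\varphi_\la(|\na v_\la|)$, we conclude $\sup_{Q_V}|\na v_\la|\le c(n,N,p,q,\nu,L)$, and undoing the scaling yields $\sup_{G_{V\rho}^\la}|\na v|\le c\la$.

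The main obstacle, compared with the $p$-intrinsic case in Lemma~\ref{lem37}, is the removal of the $\min$: the coefficients $\tfrac{\la^p}{H(0,\la)}$ and $\tfrac{a_0\la^q}{H(0,\la)}$ appearing in $\varphi_\la$ are not both bounded below universally, and one must exploit the fact that we are in the $(p,q)$-intrinsic regime (so that $a_0\la^q$ carries at least half of $H(0,\la)$) to produce the required $q$-growth lower bound on $\varphi_\la$ for $s\ge 1$. Without this observation the final constant would depend on $\la$.
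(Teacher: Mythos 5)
Your proof is correct and follows exactly the route the paper intends (the paper omits the details, stating only that the argument parallels Lemma~\ref{lem37} using the $(p,q)$-intrinsic convex function $\varphi_\la$). You have correctly identified the one genuine adaptation needed: since the $p$-term coefficient $\tfrac{\la^p}{H(0,\la)}$ need not be bounded below, the removal of the $\min$ and the uniform bound on $\fiint_{Q_{2V}}|\na v_\la|^2\,dz$ must instead come from the $q$-term, using $\tfrac{a_0\la^q}{H(0,\la)}\ge\tfrac12$ in the $(p,q)$-intrinsic regime, which also keeps the final constant free of $K$ and $\data_\delta$ as required.
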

The lemmas in this subsection lead to Proposition~\ref{prop2}. We omit the details.

\section{The proof of Theorem~\ref{main theorem}}

Let $\sigma>1+\varepsilon_0$ be a fixed constant in the statement of Theorem~\ref{main theorem}. In this section, we will select $\epsilon=\tfrac{1}{2^{q+3}}$ while $\delta>0$ and $\rho_0>0$ are chosen to satisfy Proposition~\ref{prop1} and Proposition~\ref{prop2}.
\subsection{Stopping time argument}

For $ Q_{2\rho_0}(z_0)\subset  C_{R/2}$ and $\rho\in(0,\rho_0)$, we define
\[\la_0^{2}=\fiint_{Q_{2\rho}(z_0)}\left(H(z,|\na u|)+\delta^{-1}H(z,|F|)+1\right)\,dz\quad\text{and}\quad\La_0=\la_0^p+\sup_{z\in  C_{R}}a(z)\la_0^q.\]
For $r\in(\rho,2\rho)$, we denote the upper-level sets
\begin{align*}
    \begin{split}
        &\Psi(\La,r)=\{z\in Q_{r}(z_0): H(z,|\na u|)>\Lambda\},\\ 
        &\Phi(\La,r)=\{z\in Q_{r}(z_0): H(z,|F|)>\Lambda\}.
    \end{split}
\end{align*}
For $\rho\le r_1<r_2\le 2\rho$, consider
\begin{align}\label{43}
    \La>\left(\frac{32V\rho}{r_2-r_1}\right)^\frac{q(n+2)}{2}\La_0.
\end{align}
Note that $\tfrac{\rho}{r_2-r_1}>1$. For each Lebesgue point $w\in \Psi(\La,r_1)$, there exists $\la_w>1$ such that $\La=\la_w^p+a(w)\la_w^q=H(w,\la_w).$
We claim
\begin{align}\label{55}
    \la_w>\left(\frac{32V\rho}{r_2-r_1}\right)^\frac{n+2}{2}\la_0.
\end{align}
Suppose the above inequality is false then there holds
\[\La=\la_w^p+a(w)\la_w^q\le \left(\frac{32V\rho}{r_2-r_1}\right)^\frac{q(n+2)}{2}(\la_0^p+a(w)\la_0^q)
    \le \left(\frac{32V\rho}{r_2-r_1}\right)^\frac{q(n+2)}{2}\La_0.\]
Since it contradicts to \eqref{43}, \eqref{55} holds. 
The next lemma is the stopping time argument. In the remaining subsection, we fix a Lebesgue point $w\in \Psi(\La,r_1)$.
\begin{lemma}\label{lem51}
There exists $\rho_w\in(0,\tfrac{r_2-r_1}{16V})$ such that
\[\fiint_{Q_{\rho_w}^{\la_w}(w)}\left(H(z,|\na u(z)|)+\delta^{-1}H(z,|F(z)|)\right)\  dz=\la_w^p\]
and for any $r\in(\rho_w,r_2-r_1)$
\[\fiint_{Q_{r}^{\la_w}(w)}\left(H(z,|\na u(z)|)+\delta^{-1}H(z,|F(z)|)\right)\,dz<\la_w^p.\]
Moreover, there holds
\[\la_w\le \left(\frac{2\rho}{\rho_w}\right)^\frac{n+2}{2}\la_0.\]
\end{lemma}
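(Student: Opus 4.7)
The plan is a standard continuity-based stopping-time argument applied to
\[
g(r) := \fiint_{Q_r^{\la_w}(w)}\left(H(z,|\na u|) + \delta^{-1}H(z,|F|)\right) dz,
\]
viewed as a continuous function of $r \in (0, r_2 - r_1)$. I will identify $\rho_w$ as the largest $r < (r_2-r_1)/(16V)$ at which $g(r) = \la_w^p$, verify the signs of $g - \la_w^p$ on either side, and read off the claimed bound on $\la_w$ from the equality $g(\rho_w) = \la_w^p$ combined with a crude volume-ratio estimate.

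On the small-$r$ side, since $w \in \Psi(\La, r_1)$ is a Lebesgue point, one gets $\lim_{r \to 0^+} g(r) \ge H(w, |\na u(w)|) > \La = H(w, \la_w) \ge \la_w^p$, so $g(r) > \la_w^p$ for all sufficiently small $r$. On the large-$r$ side, the assumptions $p \ge 2$ and $\la_w > 1$ give $\la_w^{2-p} \le 1$, so that $Q_r^{\la_w}(w) \subset Q_r(w) \subset Q_{r_1 + r}(z_0) \subset Q_{2\rho}(z_0)$ whenever $r < r_2 - r_1$. Combining this inclusion with the definition of $\la_0$ yields
\[
g(r) \le \frac{|Q_{2\rho}(z_0)|}{|Q_r^{\la_w}(w)|}\, \la_0^2 = \la_w^{p-2}\left(\frac{2\rho}{r}\right)^{n+2}\la_0^2.
\]
At $r = (r_2-r_1)/(16V)$ the ratio is $2\rho/r = 32V\rho/(r_2-r_1)$, and the hypothesis \eqref{55} is precisely what is needed for this to give $g((r_2-r_1)/(16V)) < \la_w^p$; monotonicity of the right-hand side in $r$ extends this strict inequality to all of $[(r_2-r_1)/(16V), r_2 - r_1)$.

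Setting $\rho_w = \sup\{r \in (0, (r_2-r_1)/(16V)) : g(r) = \la_w^p\}$ then gives a nonempty set by the intermediate value theorem, with $\rho_w < (r_2-r_1)/(16V)$ thanks to the strict endpoint inequality. Continuity of $g$ forces $g(\rho_w) = \la_w^p$, and maximality combined with the estimate above gives $g(r) < \la_w^p$ on $(\rho_w, (r_2-r_1)/(16V))$; chaining with the large-$r$ bound already established extends the inequality to all of $(\rho_w, r_2 - r_1)$. Finally, evaluating the upper bound at $r = \rho_w$ gives $\la_w^2 \le (2\rho/\rho_w)^{n+2}\la_0^2$, which is the remaining claim. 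I do not foresee any substantive difficulty; the only bookkeeping points are the containment $Q_r^{\la_w}(w) \subset Q_{2\rho}(z_0)$, which relies on $\la_w^{2-p} \le 1$, and the exact matching of constants making \eqref{55} produce the strict separation at the crucial radius.
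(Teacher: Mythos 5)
Your proposal is correct and follows essentially the same route as the paper's proof: both show $g(r)<\la_w^p$ on $[\tfrac{r_2-r_1}{16V},r_2-r_1)$ via the containment $Q_r^{\la_w}(w)\subset Q_{2\rho}(z_0)$ (using $\la_w>1$, $p\ge2$) and the volume-ratio bound together with \eqref{55}, use the Lebesgue point property of $w\in\Psi(\La,r_1)$ to get $g(r)>\la_w^p$ near $r=0$, and then stop at the largest radius where $g=\la_w^p$; the final estimate $\la_w\le(2\rho/\rho_w)^{(n+2)/2}\la_0$ follows in both from evaluating the same volume-ratio bound at $r=\rho_w$. The only cosmetic difference is that you define $\rho_w$ explicitly as a supremum while the paper invokes continuity and the intermediate value theorem, which amounts to the same stopping-time construction.
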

\begin{proof}
 For any $r\in [\tfrac{r_2-r_1}{16V},r_2-r_1)$, we observe $Q_r^{\la_w}(w)\subset Q_{2\rho}(z_0)$ and
\begin{align*}
    \begin{split}
       &\fiint_{Q_{r}^{\la_w}(w)}\left(H(z,|\na u(z)|)+\delta^{-1}H(z,|F(z)|)\right)\,dz\\
       &\qquad\le \la_w^{p-2}\left(\frac{32V\rho}{r_2-r_1}\right)^{n+2}\fiint_{Q_{2\rho}(z_0)}\left(H(z,|\na u(z)|)+\delta^{-1}H(z,|F(z)|)\right)\,dz\\
       &\qquad\qquad\le \la_w^{p-2}\left(\frac{32V\rho}{r_2-r_1}\right)^{n+2}\la_0^2<\la_w^p,
    \end{split}
\end{align*}
where to obtain the last inequality we used \eqref{55}. Note that $w\in \Psi(\La,r_1)$ implies $w\in \Psi(\la_w^p,r_1)$. Since the function
\[r \longrightarrow \fiint_{Q_{r}^{\la_w}(w)}\left(H(z,|\na u(z)|)+\delta^{-1}H(z,|F(z)|)\right) \,dz\]
is continuous, we have
\[\lim_{r\to0^+}\fiint_{Q_{r}^{\la_w}(w)}H(z,|\na u(z)|)\,dz>\la_w^p.\]
Hence, there exists $\rho_w\in(0,\tfrac{r_2-r_1}{16V})$ satisfying the conclusion of the first statement. The last statement in this lemma follows from the first statement and
\begin{align*}
\begin{split}
    &\la_w^p
    \le \la_w^{p-2}\left(\frac{2\rho}{\rho_w}\right)^{n+2}\fiint_{Q_{2\rho}(z_0)}\left(H(z,|\na u(z)|)+\delta^{-1}H(z,|F(z)|)\right)\,dz\\
    &\qquad=\la_w^{p-2}\left(\frac{2\rho}{\rho_w}\right)^{n+2}\la_0^2.
\end{split}
\end{align*}
Dividing both sides into $\la_w^{p-2}$ and then taking $\tfrac{1}{2}$ to the exponent of both sides, the proof of the second statement is completed.
\end{proof}

Note that $Q_{16V\rho_w}^{\la_w}(w)\subset Q_{2\rho_0}(z_0)$. If $p$-intrinsic case ($a(w)\la_w^q\le K^2\la_w^p $) holds, then Lemma~\ref{lem51} satisfies the assumptions in Proposition~\ref{prop1}. 
On the other hand, in the $(p,q)$-intrinsic case ($K^2\la_w^p<a(w)\la_w^q$), the following lemma guarantees the assumptions in Proposition~\ref{prop2}.

\begin{lemma}\label{lem54}
 Suppose $K^2\la_w^p<a(w)\la_w^q$. There exists $\varrho_w\in(0,\rho_w)$ such that
\[\fiint_{G_{\varrho_w}^{\la_w}(w)}\left(H(z,|\na u|)+\delta^{-1}H(z,|F|)\right)\,dz=H(w,\la_w)\]
and for any $r\in(\varrho_w,r_2-r_1)$
\[\fiint_{G_{r}^{\la_w}(w)}\left(H(z,|\na u|)+\delta^{-1}H(z,|F|)\right)\,dz<H(w,\la_w).\]
Moreover, there holds
\[\la_w\le \left(\frac{2\rho}{\varrho_w}\right)^\frac{n+2}{2}\la_0.\]
\end{lemma}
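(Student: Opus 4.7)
The plan is to adapt the stopping-time argument of Lemma~\ref{lem51} to the $(p,q)$-intrinsic geometry, with the threshold $\la_w^p$ replaced by $H(w,\la_w)$ and the time scale $\la_w^{2-p}$ replaced by $\la_w^2/H(w,\la_w)$.

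First I would check that $G_r^{\la_w}(w)\subset Q_{2\rho}(z_0)$ for every $r\in\left[\tfrac{r_2-r_1}{16V},\,r_2-r_1\right)$. Spatially this is immediate from $w\in Q_{r_1}(z_0)$ and $r_1+r<r_2\le 2\rho$. In the time direction, since $p\ge 2$ and $\la_w>1$, one has $\tfrac{\la_w^2}{H(w,\la_w)}\le \la_w^{2-p}\le 1$, so $J_r^{\la_w}(w_t)\subseteq(w_t-r^2,w_t+r^2)$ and the same estimate gives the inclusion. Using the volume identities $|G_r^{\la_w}|=2|B_1|\tfrac{\la_w^2}{H(w,\la_w)}r^{n+2}$ and $|Q_{2\rho}|=2|B_1|(2\rho)^{n+2}$, the inclusion and the definition of $\la_0^2$ yield
\[\fiint_{G_r^{\la_w}(w)}\!\left(H(z,|\na u|)+\delta^{-1}H(z,|F|)\right)dz\le \frac{H(w,\la_w)}{\la_w^2}\left(\frac{2\rho}{r}\right)^{n+2}\la_0^2.\]
Since $r\ge\tfrac{r_2-r_1}{16V}$ gives $\left(\tfrac{2\rho}{r}\right)^{n+2}\le\left(\tfrac{32V\rho}{r_2-r_1}\right)^{n+2}$ and \eqref{55} reads $\la_w^2>\left(\tfrac{32V\rho}{r_2-r_1}\right)^{n+2}\la_0^2$, the right-hand side is strictly smaller than $H(w,\la_w)$, proving the strict inequality on that whole $r$-interval.

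Next I would analyze the behaviour as $r\to 0^+$. Because $w\in\Psi(\La,r_1)$ with $\La=H(w,\la_w)$ and $H(w,\cdot)$ is strictly increasing, we have $|\na u(w)|>\la_w$; by Lebesgue differentiation along the family $G_r^{\la_w}(w)$, which has fixed eccentricity as $r\to 0^+$,
\[\lim_{r\to 0^+}\fiint_{G_r^{\la_w}(w)}\!\left(H(z,|\na u|)+\delta^{-1}H(z,|F|)\right)dz\ge H(w,|\na u(w)|)>H(w,\la_w).\]
The map $r\mapsto \fiint_{G_r^{\la_w}(w)}(\cdots)\,dz$ is continuous, so taking $\varrho_w$ to be the largest $r\in\bigl(0,\tfrac{r_2-r_1}{16V}\bigr)$ at which the average equals $H(w,\la_w)$ produces the required stopping radius; continuity together with the previous paragraph yields the strict inequality for all $r\in(\varrho_w,r_2-r_1)$.

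Finally, the quantitative bound follows by reinserting $G_{\varrho_w}^{\la_w}(w)\subset Q_{2\rho}(z_0)$ into the stopping-time equality:
\[H(w,\la_w)=\fiint_{G_{\varrho_w}^{\la_w}(w)}\!\left(H(z,|\na u|)+\delta^{-1}H(z,|F|)\right)dz\le \frac{H(w,\la_w)}{\la_w^2}\left(\frac{2\rho}{\varrho_w}\right)^{n+2}\la_0^2,\]
which upon cancelling $H(w,\la_w)$ gives $\la_w\le(2\rho/\varrho_w)^{(n+2)/2}\la_0$. The only delicate point in the argument is getting the volume bookkeeping right with the anisotropic time scale $\la_w^2/H(w,\la_w)$; once the factor $H(w,\la_w)/\la_w^2$ is correctly tracked through the volume ratio, the structure of Lemma~\ref{lem51} carries over verbatim.
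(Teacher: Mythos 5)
Your construction of the stopping radius, the use of \eqref{55} together with the volume ratio $|Q_{2\rho}|/|G_r^{\la_w}|=\tfrac{H(w,\la_w)}{\la_w^2}\bigl(\tfrac{2\rho}{r}\bigr)^{n+2}$, the Lebesgue-point argument as $r\to0^+$, and the final bound obtained by reinserting the inclusion $G_{\varrho_w}^{\la_w}(w)\subset Q_{2\rho}(z_0)$ into the stopping-time equality are all correct, and the computations (including $\tfrac{\la_w^2}{H(w,\la_w)}\le\la_w^{2-p}\le1$ for the temporal inclusion) check out. Note, however, that your route differs from the paper's: you re-run the Lemma~\ref{lem51}-type estimate directly in the $(p,q)$-geometry against $Q_{2\rho}(z_0)$, whereas the paper instead deduces the large-radius bound from the already established $p$-intrinsic stopping time via
\[\fiint_{G_{r}^{\la_w}(w)}\left(H(z,|\na u|)+\delta^{-1}H(z,|F|)\right)dz\le\frac{|Q_r^{\la_w}|}{|G_r^{\la_w}|}\fiint_{Q_{r}^{\la_w}(w)}\left(H(z,|\na u|)+\delta^{-1}H(z,|F|)\right)dz\le\frac{H(w,\la_w)}{\la_w^p}\,\la_w^p\]
for $r\in[\rho_w,r_2-r_1)$, with strict inequality for $r>\rho_w$ by Lemma~\ref{lem51}.

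This difference is where a genuine gap appears: the lemma asserts $\varrho_w\in(0,\rho_w)$, but your argument only controls the averages for $r\ge\tfrac{r_2-r_1}{16V}$ and therefore only places the largest crossing radius in $\bigl(0,\tfrac{r_2-r_1}{16V}\bigr)$; nothing in your proof excludes the possibility that this radius lies in $[\rho_w,\tfrac{r_2-r_1}{16V})$, so the stated localization $\varrho_w<\rho_w$ is not proven. The downstream covering argument is driven by the comparison $\fiint_{G_r^{\la_w}}\le\tfrac{|Q_r^{\la_w}|}{|G_r^{\la_w}|}\fiint_{Q_r^{\la_w}}$ together with Lemma~\ref{lem51}, which is exactly the one-line addition your proof needs: it shows the average over $G_r^{\la_w}(w)$ stays below $H(w,\la_w)$ on all of $[\rho_w,r_2-r_1)$, so every crossing radius, in particular your maximal one, lies below $\rho_w$. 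With that comparison inserted, your argument proves the full statement; without it, the claim $\varrho_w\in(0,\rho_w)$ is unsubstantiated even though the two average identities and the bound $\la_w\le(2\rho/\varrho_w)^{(n+2)/2}\la_0$ are correctly derived.
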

\begin{proof}
Since $a(w)>0$, we see that $\la_w^p<H(w,\la_w)$ and $G_{r}^{\la_w}(w)\subsetneq Q_{r}^{\la_w}(w)$ for all $r>0$. We have from Lemma~\ref{lem51} that for any $\rho\in[\rho_w,r_2-r_1)$
\begin{align*}
\begin{split}
    &\fiint_{G_{r}^{\la_w}(w)}\left(H(z,|\na u|)+\delta^{-1}H(z,|F|)\right)\,dz\\
    &\qquad<\frac{|Q_{r}^{\la_w}|}{|G_{r}^{\la_w}|}\fiint_{Q_{r}^{\la_w}(w)}\left(H(z,|\na u|)+\delta^{-1}H(z,|F|)\right)\,dz\\
    &\qquad\qquad\le \frac{H(w,\la_w)}{\la_w^p} \la_w^p=H(w,\la_w).
\end{split}
\end{align*}
Since $w\in \Psi(\La,r_1)$ and the function
\[r \longrightarrow \fiint_{Q_r^{\la_w}(w)}\left(H(z,|\na u(z)|)+\delta^{-1}H(z,|F(z)|)\right)\,dz\]
is continuous, there exists $\varrho_w\in(0,\rho_w)$ satisfying the first statement of lemma. Meanwhile the second statement follows from 
\begin{align*}
\begin{split}
    &H(w,\la_w)
    \le\frac{H(w,\la_w)}{\la_w^2}\left(\frac{2\rho}{\varrho_w}\right)^{n+2}\fiint_{Q_{2\rho}(z_0)}\left(H(z,|\na u|)+\delta^{-1}H(z,|F|)\right)\,dz\\
    &\qquad=\frac{H(w,\la_w)}{\la_w^2}\left(\frac{2\rho}{\varrho_w}\right)^{n+2}\la_0^2.
\end{split}
\end{align*}
This completes the proof.
\end{proof}

Finally, we end this subsection with the comparability of $\la_{(\cdot)}$. It is necessary for the Vitali covering argument.
Recall from Lemma~\ref{lem42} that $[a]_\alpha(V\rho_w)^\alpha\la_w^q\le \la_w^p$ for $p$-intrinsic case while $[a]_\alpha(V\varrho_w)^\alpha\la_w^q\le \la_w^p$ holds in $(p,q)$-intrinsic case from Lemma~\ref{lem56}.

\begin{lemma}\label{lem52}
    If $K^2\la_w^p\ge a(w)\la_w^q$, then $\la_w\le 2^\frac{1}{p}\la_z$ for any $z\in Q_{V\rho_w}(w)\cap \Psi(\La,r_1)$.
    If $K^2\la_w^p<a(w)\la_w^q$, then the same estimate holds for any $z\in Q_{V\varrho_w}(w)\cap \Psi(\La,r_1)$.
\end{lemma}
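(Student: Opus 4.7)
The plan is to compare the defining identities $H(w,\la_w)=\La=H(z,\la_z)$ at the two nearby Lebesgue points. Since $a(z)\ge\inf_{ C_R}a>0$, the map $s\mapsto s^p+a(z)s^q$ is a strictly increasing bijection of $[0,\infty)$, so a unique $\la_z>1$ exists with $H(z,\la_z)=\La$. If $\la_w\le\la_z$ there is nothing to prove, so I would assume $\la_w>\la_z$.

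Subtracting the two identities and regrouping yields
\[(\la_w^p-\la_z^p)+a(w)(\la_w^q-\la_z^q)=(a(z)-a(w))\la_z^q.\]
Both terms on the left are non-negative under the assumption $\la_w\ge\la_z$, so in particular $a(z)\ge a(w)$. Discarding the non-negative term $a(w)(\la_w^q-\la_z^q)$ on the left and using $\la_z\le\la_w$ on the right leads to
\[\la_w^p-\la_z^p\le |a(z)-a(w)|\la_w^q.\]
Since $z\in Q_{V\rho_w}(w)$ gives $|x_z-x_w|\le V\rho_w$ and $|t_z-t_w|^{\alpha/2}\le(V\rho_w)^\alpha$, the H\"older hypothesis \eqref{16} supplies $|a(z)-a(w)|\le 2[a]_\alpha(V\rho_w)^\alpha$, so
\[\la_w^p-\la_z^p\le 2[a]_\alpha(V\rho_w)^\alpha\la_w^q.\]

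Finally, in the $p$-intrinsic case Lemma~\ref{lem42} applies at $w$ with parameter $\rho_w$ (its hypotheses being furnished by Lemma~\ref{lem51} together with $K^2\la_w^p\ge a(w)\la_w^q$) and, after a further shrinkage of $\rho_0$ (legitimate under the convention that $\rho_0$ is the minimum across all lemmas), it yields $2[a]_\alpha(V\rho_w)^\alpha\la_w^q\le\tfrac{1}{2}\la_w^p$. Combined with the previous display this gives $\la_w^p\le 2\la_z^p$, i.e.\ $\la_w\le 2^{1/p}\la_z$, which is the claim. The $(p,q)$-intrinsic case is handled identically with $\rho_w$ replaced by $\varrho_w$, invoking Lemma~\ref{lem54} to justify the hypotheses and Lemma~\ref{lem56} in place of Lemma~\ref{lem42}. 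The only mildly delicate point is the bookkeeping of constants: one must verify that the smallness produced by Lemmas~\ref{lem42} and \ref{lem56} genuinely absorbs the universal factor $2[a]_\alpha V^\alpha$, which is a matter of choosing $\rho_0$ small enough and presents no conceptual obstacle.
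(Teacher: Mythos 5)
Your proof is correct and follows essentially the same route as the paper's: both arguments hinge on comparing the identities $H(w,\la_w)=\La=H(z,\la_z)$, invoke the H\"older continuity of $a$ to bound the difference $|a(z)-a(w)|$ by $c\,[a]_\alpha(V\rho_w)^\alpha$, and then absorb the $q$-power contribution via the smallness supplied by Lemma~\ref{lem42} (resp.\ Lemma~\ref{lem56}). The only difference is organizational: the paper runs a proof by contradiction assuming $\la_z<2^{-1/p}\la_w$ and derives $\La<\La$, whereas you subtract the two identities directly and discard the non-negative term $a(w)(\la_w^q-\la_z^q)$; this is a cosmetic rearrangement, and your slightly more careful tracking of the factor $2$ in the parabolic H\"older estimate (space plus time contributions) is a sound bookkeeping point that the paper implicitly absorbs into its constants.
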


\begin{proof}
  We suffice to prove when $K^2\la_w^p\ge a(w)\la_w^q$ since the proof is repeated. The second estimate is proved similarly. We prove it by contradiction. Suppose $\la_z<2^{-\frac{1}{p}}\la_w.$
  Since $z\in Q_{V\rho_w}(w)$, we have $a(z)\le a(w)+[a]_{\alpha}(V\rho_w)^\alpha$ and therefore we get
  \[\La=\la_z^p+a(z)\la_z^q\le \la_z^p+a(w)\la_z^q+[a]_{\alpha}(V\rho_w)^\alpha\la_z^q.\]
  It follows from the assumption $\la_z<2^{-\frac{1}{p}}\la_w$ that
  \[\La< \frac{1}{2}(\la_w^p+a(w)\la_w^q)+\frac{1}{2}[a]_\alpha(V\rho_w)^\alpha\la_w^q\le \frac{1}{2}(\la_w^p+a(w)\la_w^q)+\frac{1}{2}\la_w^p<\La.\]
  It is a contradiction and the proof is completed.
\end{proof}

\subsection{Vitali type covering argument}
For each $z\in \Psi(\La,r_1)$ we denote
\[\mathcal{Q}_z=
    \begin{cases}
    Q_{l_z}^{\la_z}(z) &\text{if}\quad K^2\la_z^p\ge a(z)\la_z^q,\\
    G_{l_z}^{\la_z}(z)&\text{if}\quad K^2\la_z^p<a(z)\la_z^q,
    \end{cases}
     \qquad
     l_z=
    \begin{cases}
    \rho_z&\text{if}\quad K^2\la_z^p\ge a(z)\la_z^q,\\
    \varrho_z&\text{if}\quad K^2\la_z^p<a(z)\la_z^q.
    \end{cases}\]
Consider the family of these intrinsic cylinders $\mathcal{F}=\left\{\mathcal{Q}_z:z\in \Psi(\La,r_1) \right\}$.
Recalling $l_z\le \tfrac{r_2-r_1}{16V}$, we define the subfamily
\[\mathcal{F}_j=\left\{\mathcal{Q}_z\in \mathcal{F}: \frac{r_2-r_1}{16V2^j}<l_z\le \frac{r_2-r_1}{16V2^{j-1}} \right\}\]
for $j\in\mathbb{N}$. We choose $\mathcal{G}_j\subset \mathcal{F}_j$ inductively as follows. We first take $\mathcal{G}_1$ as a maximal disjoint collection of cylinders in $\mathcal{F}_1$. Then each cylinder in $\mathcal{G}_1$ is bounded below and thus $\mathcal{G}_1$ is finite. Indeed, the radius of the cylinder is bounded below from the construction while the scaling factor in time $\la_z^{2-p}$ or $\la_z^2/H(z,\la_z)$ is uniformly bounded below by Lemma~\ref{lem51} and Lemma~\ref{lem54}. For selected $\mathcal{G}_1,...,\mathcal{G}_j$, we select a maximal disjoint subset
\[\mathcal{G}_{j+1}=\left\{ \mathcal{Q}_w\in\mathcal{F}_{j+1}: \mathcal{Q}_w\cap \mathcal{Q}_z=\emptyset\quad\text{for all}\quad\mathcal{Q}_z\in \cup_{k=1}^{j}\mathcal{G}_k\right\}.\]
In the same reasoning $\mathcal{G}_{j}$ are finite and $\mathcal{G}=\cup_{j=1}^\infty \mathcal{G}_j$ is a countable subset of pairwise disjoint cylinders in $\mathcal{F}$. In the remaining of this subsection, we will show that for any $\mathcal{Q}_z\in \mathcal{F}$, there exists $\mathcal{Q}_w\in \mathcal{G}$ such that
\begin{align}\label{541}
    \mathcal{Q}_z\cap\mathcal{Q}_w\ne\emptyset\quad\text{and}\quad\mathcal{Q}_z\subset V\mathcal{Q}_w,
\end{align}
where for any $\kappa>0$, we denoted
\[\kappa\mathcal{Q}_w=
    \begin{cases}
    Q_{\kappa l_w}^{\la_w}(w) &\text{if}\quad K^2\la_w^p\ge a(w)\la_w^q,\\
    G_{\kappa l_w}^{\la_w}(w)&\text{if}\quad K^2\la_w^p<a(w)\la_w^q.
    \end{cases}\]
For each $\mathcal{Q}_z\in\mathcal{F}$, there exists $j\in\mathbb{N}$ such that $\mathcal{Q}_z\in \mathcal{F}_j$. From the maximal disjointedness of $\mathcal{G}_j$, we find $\mathcal{Q}_w\in \cup_{k=1}^j\mathcal{G}_k$ such that 
\[\mathcal{Q}_z\cap \mathcal{Q}_w\ne\emptyset\quad\text{and}\quad l_z\le 2l_w.\]
Before we prove the inclusion in \eqref{541}, note that the standard Vitali covering argument with the above display implies
\begin{align}\label{433}
    Q_{l_z}(z)\subset 5Q_{l_w}(w)=Q_{5l_w}(w).
\end{align}
In particular, $B_{l_z}(x)\subset 5B_{l_w}(y)$ holds where $z=(x,t)$ and $w=(y,s)$. 
Recalling $V\ge 5$, we suffice to prove the inclusion of time intervals in \eqref{541}. 
Note we are able to employ Lemma~\ref{lem52} and Lemma~\ref{lem53} owing to \eqref{433}.

\textit{Case 1: $\mathcal{Q}_z=Q_{l_z}^{\la_z}(z)$ and $\mathcal{Q}_w= Q_{l_w}^{\la_w}(w)$.} 
For any $\tau\in I_{l_z}^{\la_z}(t)$, we observe
\[|\tau-s|\le |\tau-t|+|t-s|\le |I_{l_z}^{\la_z}|+\tfrac{1}{2}|I_{l_w}^{\la_w}|\le 2\la_z^{2-p}l_z^2+\la_w^{2-p}l_w^2.\]
For the scaling factors, there holds $\la_w\le 2^\frac{1}{p}\la_z$ from Lemma~\ref{lem52} and for the radii, we use $l_z\le 2l_w$. Then there holds
\begin{align*}
    \begin{split}
        &2\la_z^{2-p}l_z^2+\la_w^{2-p}l_w^2
        \le 2^{1+\frac{p-2}{p}}\la_w^{2-p}(2l_w)^2+\la_w^{2-p}l_w^2\\
        &\qquad\le \la_w^{2-p}(4l_w)^2+\la_w^{2-p}l_w^2\le \la_w^{2-p}(5l_w)^2,
    \end{split}
\end{align*}
where we used $\tfrac{p-2}{p}\le 1$. Since we have $5\le V$, \eqref{541} holds.

\textit{Case 2: $\mathcal{Q}_z=G_{l_z}^{\la_z}(z)$ and $\mathcal{Q}_w= Q_{l_w}^{\la_w}(w)$.} Note that for any $\tau\in I_{l_z}^{\la_z}(t)$ there holds
\[|\tau-s|
        \le |J_{l_z}^{\la_z}|+\tfrac{1}{2}|I_{l_w}^{\la_w}|\le 2\frac{\la_z^2}{\La}l_z^2+\la_w^{2-p}l_w^2\le 2\la_z^{2-p}l_z^2+\la_w^{2-p}l_w^2.\]
Therefore \eqref{541} follows from the same argument in the previous case.

\textit{Case 3: $\mathcal{Q}_z=Q_{l_z}^{\la_z}(z)$ and $\mathcal{Q}_w= G_{l_w}^{\la_w}(w)$.} It follows from Lemma~\ref{lem52} and Lemma~\ref{lem53} that $\la_w\le 2^\frac{1}{p}\la_z$ and $\tfrac{a(w)}{2}\le a(z)\le 2a(w)$.
Also, recalling $a(z)\la_z^q\le K^2\la_z^p$, $\tfrac{q-2}{p}\le 1$ and $H(w,\la_w)\le2a(w)\la_w^q$, we observe
\[\la_z^{2-p}
    =\frac{\la_z^2}{\la_z^p}\le K^2\frac{\la_z^2}{a(z)\la_z^q}\le 2K^2\frac{\la_w^2}{a(z)\la_w^q}\le 4K^2\frac{\la_w^2}{a(w)\la_w^q}\le 8K^2\frac{\la_w^2}{\La}.\]
Therefore the above display and $l_z\le 2l_w$ lead to that for any $\tau\in I_{l_z}^{\la_z}(t)$,
\[|\tau-s|
        \le |I_{l_z}^{\la_z}|+\tfrac{1}{2}|J_{l_w}^{\la_w}|\le 16K^2\frac{\la_w^2}{\La}(2l_w)^2+\frac{\la_w^2}{\La}l_w^2\le \frac{\la_z^2}{\La}((8K+1)l_w)^2.\]
The conclusion follows from the facts $8K+1\le 9K= V$.

\textit{Case 4: $\mathcal{Q}_z=G_{l_z}^{\la_z}(z)$ and $\mathcal{Q}_w= G_{l_w}^{\la_w}(w)$.} Again we have $\la_w\le 2^\frac{1}{p}\la_z$ and $\tfrac{a(w)}{2}\le a(z)\le 2a(w)$. Since $\tfrac{q-2}{p}\le 1$, there holds
\begin{align*}
\begin{split}
    &\frac{\La}{\la_w^2}=\la_w^{p-2}+a(w)\la_w^{q-2}\le 2^{\frac{q-2}{p}}(\la_z^{p-2}+a(w)\la_z^{q-2})\\
    &\qquad\le 4(\la_z^{p-2}+a(z)\la_z^{q-2})=4\frac{\La}{\la_z^2}.
\end{split}
\end{align*}
Therefore for any $\tau\in I_{l_z}^{\la_z}(t)$, there holds
\[|\tau-s|
        \le |J_{l_z}^{\la_z}|+\tfrac{1}{2}|J_{l_w}^{\la_w}|
        \le 8\frac{\la_w^2}{\La}(2l_w)^2+\frac{\la_w^2}{\La}l_w^2\le \frac{\la_w^2}{\La}(9l_w)^2.\]
Again from the fact $9\le V$, \eqref{541} holds.

All the possible cases are covered. We conclude that there exists pairwise disjoint subfamily $\mathcal{G}=\{\mathcal{Q}_i\}_{i\in\mathbb{N}}$ in $\Psi(\La,r_1)$ such that $\Psi(\La,r_1)\subset \cup_{i\in\mathbb{N}}V\mathcal{Q}_i$ where
\[\mathcal{Q}_i=
    \begin{cases}
        Q_{\rho_i}^{\la_i}(w_i)&\text{if}\quad K^2\la_i^p\ge a(w_i)\la_i^q,\\
        G_{\varrho_i}^{\la_i}(w_i)&\text{if}\quad K^2\la_i^p<a(w_i)\la_i^q,
    \end{cases}\]
$\la_i=\la_{w_i}$ and $\rho_i=\rho_{w_i}$ if $K^2\la_i^p\ge a(w_i)\la_i^q$ or $\varrho_i=\varrho_{w_i}$ if $K^2\la_i^p\ge a(w_i)\la_i^q$.

\subsection{Final proof of the gradient estimate}
In the previous sections, we verified the assumptions in Proposition~\ref{prop1} and Proposition~\ref{prop2}. In order to simplify the notion we use $\eta\in(0,1)$ as a constant in this subsection to denote $\eta=\tfrac{1}{4(K^2+1)}.$
We first suppose $\mathcal{Q}_i=Q_{\rho_i}^{\la_i}(w_i)$. Then Lemma~\ref{lem51} implies
\begin{align*}
\begin{split}
    &|\mathcal{Q}_i|
    =\frac{1}{\la_i^p}\iint_{\mathcal{Q}_i}\left(H(z,|\na u|)+\delta^{-1}H(z,|F|)\right)\,dz\\
    &\qquad= \frac{1}{\la_i^p}\iint_{\mathcal{Q}_i\cap \Psi(\eta\La,r_2)^c}H(z,|\na u|)\,dz+\frac{1}{\la_i^p}\iint_{\mathcal{Q}_i\cap \Psi(\eta\La,r_2)}H(z,|\na u|)\,dz\\
    &\qquad\qquad+\frac{1}{\la_i^p}\iint_{\mathcal{Q}_i\cap\Phi(\eta\delta\La,r_2)^c}\delta^{-1}H(z,|F|)\,dz+\frac{1}{\la_i^p}\iint_{\mathcal{Q}_i\cap \Phi(\eta\delta\La,r_2)}\delta^{-1}H(z,|F|)\,dz.
\end{split}
\end{align*}
Since we have $\La=\la_i^p+a(w_i)\la_i^q\le (K^2+1)\la_i^p$, note that
\[\iint_{\mathcal{Q}_i\cap \Psi(\eta\La,r_2)^c}H(z,|\na u|)\,dz\le \iint_{\mathcal{Q}_i\cap \Psi(\eta\La,r_2)^c}\eta\Lambda\,dz\le \frac{1}{4}\la_i^p|\mathcal{Q}_i|.\]
Similarly, there holds
\[\iint_{\mathcal{Q}_i\cap\Phi(\eta\delta\La,r_2)^c}\delta^{-1}H(z,|F|)\,dz\le  \frac{1}{4}\la_i^p|\mathcal{Q}_i|.\]
Therefore, we obtain
\begin{align}\label{446}
    |\mathcal{Q}_i|\le \frac{2}{\la_i^p}\iint_{\mathcal{Q}_i\cap \Psi(\eta\La,r_2)}H(z,|\na u|)\,dz+\frac{2}{\la_i^p}\iint_{\mathcal{Q}_i\cap\Phi(\eta\delta\La,r_2)}\frac{1}{\delta}H(z,|F|)\,dz.
\end{align}

Meanwhile, it follows from Proposition~\ref{prop1} that there exists $\na v_i\in L^\infty(V\mathcal{Q}_i,\RR^{Nn})$ and $S=S(\data_\delta)$ such that
\begin{align}\label{447}
    \iint_{V\mathcal{Q}_i} H(z,|\na u-\na v_i|)\,dz\le \ep\la_i^p|\mathcal{Q}_i|\quad\text{and}\quad\sup_{z\in V\mathcal{Q}_i}|\na v_i(z)|\le \left(\frac{S}{2^{q+3}}\right)^\frac{1}{q}\la_i.
\end{align}
Also, we apply $[a]_\alpha(V\rho_z)^\alpha\la_z^q\le \la_z^p$ to see that for a.e. $z\in V\mathcal{Q}_i$, there holds
\begin{align*}
\begin{split}
    &H(z,|\na v_i(z)|)\le \frac{S}{2^{q+3}}(\la_i^p+a(z)\la_i^q)\\
    &\qquad\le \frac{S}{2^{q+3}}(H(w_i,\la_i)+[a]_{\alpha}(V\rho_i)^\alpha\la_i^q)\le \frac{S}{2^{q+3}}(H(w_i,\la_i)+\la_i^p).
\end{split}
\end{align*}
Thus, it follows $H(z,|\na v_i(z)|)\le \tfrac{S}{2^{q+2}}\La$ for a.e. $z\in V\mathcal{Q}_i$.
We now claim 
\begin{align}\label{450}
    H(z,|\na v_i(z)|)\le H(z,|\na u(z)-\na v_i(z)|)\quad\text{for a.e.}\quad z\in V\mathcal{Q}_i\cap \Psi(S\La,r_1).
\end{align}
Indeed, if the above inequality is false, then there exists $z\in  V\mathcal{Q}_i\cap \Psi(S\La,r_1)$ such that $H(z,|\na v_i(z)|)>H(z,|\na u(z)-\na v_i(z)|)$ and
\begin{align*}
\begin{split}
    &H(z,|\na v_i(z)|)
    \le \frac{1}{2^{q+2}}S\La    \le \frac{1}{2^{q+2}}H(z,|\na u(z)|)\\
    &\qquad\le \frac{2^q}{2^{q+2}}(H(z,|\na u(z)-\na v_i(z)|)+H(z,|\na v_i(z)|))\\
    &\qquad\qquad\le \frac{2^{q+1}}{2^{q+2}}H(z,|\na v_i(z)|)=\frac{1}{2}H(z,|\na v_i(z)|).
\end{split}
\end{align*}
Thus $0=H(z,|v_i(z)|)>H(z,|\na u(z)|)>S\Lambda$ and it is a contradiction. Employing the first inequality in \eqref{447} and \eqref{450}, it follows
\begin{align*}
    \begin{split}
        &\iint_{V\mathcal{Q}_i\cap \Psi(S\La,r_1)}H(z,|\na u|)\,dz\\
        &\qquad\le 2^q\iint_{V\mathcal{Q}_i\cap \Psi(S\La,r_1)}\left(H(z,|\na u-\na v_i|)+ H(z,|\na v_i|)\right)\,dz\\
        &\qquad\qquad\le 2^{q+1}\iint_{V\mathcal{Q}_i\cap \Psi(S\La,r_1)}H(z,|\na u-\na v_i|)\,dz\le 2^{q+1}\epsilon\la_i^p|\mathcal{Q}_i|.
    \end{split}
\end{align*}
Combining the above inequality with \eqref{446}, we have
\begin{align}\label{453}
\begin{split}
    &\iint_{V\mathcal{Q}_i\cap \Psi(S\La,r_1)}H(z,|\na u|)\,dz
    \le 2^{q+2}\epsilon \iint_{\mathcal{Q}_i\cap \Psi(\eta\La,r_2)}H(z,|\na u|)\,dz\\
    &\qquad\qquad+2^{q+2}\epsilon\iint_{\mathcal{Q}_i\cap\Phi(\eta\delta\La,r_2)}\delta^{-1}H(z,|F|)\,dz.
\end{split}
\end{align}

We next consider when $\mathcal{Q}_i=G_{\varrho_i}^{\la_i}(w_i)$. We will obtain the same estimate in \eqref{453}. Using Lemma~\ref{lem54} and $\eta\le \tfrac{1}{4}$, we get 
\[|\mathcal{Q}_i|
        \le \frac{|\mathcal{Q}_i|}{2}+\frac{1}{\Lambda}\iint_{\mathcal{Q}_i\cap \Psi(\eta\La,r_2)}H(z,|\na u|)\,dz+\frac{1}{\Lambda}\iint_{\mathcal{Q}_i\cap \Phi(\eta\delta\La,r_2)}\delta^{-1}H(z,|F|)\,dz.\]
Thus, we have
\[|\mathcal{Q}_i|\le \frac{2}{\Lambda}\iint_{\mathcal{Q}_i\cap \Psi(\eta\La,r_2)}H(z,|\na u|)\,dz+\frac{2}{\Lambda}\iint_{\mathcal{Q}_i\cap \Phi(\eta\delta\La,r_2)}\delta^{-1}H(z,|F|)\,dz.\]

At the same time, Proposition~\ref{prop2} gives that there exists $\na v_i\in L^\infty(V\mathcal{Q}_i,\RR^n)$ and $S=S(n,p,q,\nu,L)$ such that
\[\iint_{V\mathcal{Q}_i}H(z,|\na u-\na v_i|)\,dz\le \epsilon\La|\mathcal{Q}_i|\quad\text{and}\quad\sup_{z\in V\mathcal{Q}_i}|\na v_i(z)|\le \left(\frac{S}{2^{q+3}}\right)^\frac{1}{q}\la_i.\]
Since the comparability of $a(\cdot)$ in Lemma~\ref{lem53} holds, for $z\in V\mathcal{Q}_i$ there holds
\[H(z,|\na v_i(z)|)\le \frac{S}{2^{q+3}}(\la_i^p+a(z)\la_i^q)\le \frac{S}{2^{q+2}}(\la_i^p+a(w_i)\la_i).\]
Therefore, we obtain $H(z,|\na v_i(z)|)\le \tfrac{S}{2^{q+2}}\La$ for a.e. $z\in V\mathcal{Q}_i$
and $H(z,|\na v_i(z)|)\le H(z,|\na u(z)-\na v_i(z)|)$ for a.e. $z\in V\mathcal{Q}_i\cap \Psi(S\La,r_1).$
Hence, we again get
\[\iint_{V\mathcal{Q}_i\cap \Psi(S\La,r_1)}H(z,|\na u|)\,dz\le  2^{q+1}\epsilon\La|\mathcal{Q}_i|\]
and conclude 
\begin{align}\label{461}
\begin{split}
    &\iint_{V\mathcal{Q}_i\cap \Psi(S\La,r_1)}H(z,|\na u|)\,dz
    \le 2^{q+2}\epsilon \iint_{\mathcal{Q}_i\cap \Psi(\eta\La,r_2)}H(z,|\na u|)\,dz\\
    &\qquad\qquad+2^{q+2}\epsilon\iint_{\mathcal{Q}_i\cap\Phi(\eta\delta\La,r_2)}\delta^{-1}H(z,|F|)\,dz.
\end{split}
\end{align}
On the other hand utilizing the Vitali type covering argument, the covering property gives
\[\iint_{\Psi(S\La,r_1)}H(z,|\na u|)\,dz\le \sum_{i\in\mathbb{N}}\iint_{V\mathcal{Q}_i\cap \Psi(S\La,r_1)}H(z,|\na u|)\,dz\]
while the disjointness property implies
\begin{align*}
\begin{split}
    &\sum_{i\in\mathbb{N}}\biggl( \iint_{\mathcal{Q}_i\cap \Psi(\eta\La,r_2)}H(z,|\na u|)\,dz+\iint_{\mathcal{Q}_i\cap\Phi(\eta\delta\La,r_2)}\delta^{-1}H(z,|F|)\,dz\biggr)\\
        &\qquad\le \iint_{ \Psi(\eta\La,r_2)}H(z,|\na u|)\,dz+\iint_{\Phi(\eta\delta\La,r_2)}\delta^{-1}H(z,|F|)\,dz.
\end{split}
\end{align*}
Since the above displays are connected by \eqref{453} and \eqref{461}, we obtain
\begin{align}\label{463}
    \begin{split}
        &\iint_{\Psi(S\La,r_1)}H(z,|\na u|)\,dz
        \le 2^{q+2}\epsilon \iint_{\mathcal{Q}_i\cap \Psi(\eta\La,r_2)}H(z,|\na u|)\,dz\\
        &\qquad\qquad+2^{q+2}\iint_{\Phi(\eta\delta\La,r_2)}\delta^{-1}H(z,|F|)\,dz.
    \end{split}
\end{align}
We continue by considering the following truncated functions and level-set. For $k>0$, let
\begin{align*}
    \begin{split}
        &H(z,|\na u(z)|)_k=\min\{H(z,|\na u(z)|),k\},\\
        &\Psi_k(\La,\rho)=\{z\in Q_{\rho}:H(z,|\na u|)_k>\Lambda\}.
    \end{split}
\end{align*}
Observe that if $\La>k$, then $\Psi_k(\La,\rho)=\emptyset$ and if $\La\le k$, then $\Psi_k(\La,\rho)=\Psi(\La,\rho)$. Therefore, we deduce from \eqref{463} that
\begin{align}\label{465}
    \begin{split}
    &\iint_{\Psi_k(S\La,r_1)}H(z,|\na u|)\,dz
    \le 2^{q+2}\epsilon \iint_{ \Psi_k(\eta\La,r_2)}H(z,|\na u|)\,dz\\
    &\qquad\qquad+2^{q+2}\iint_{\Phi(\eta\delta\La,r_2)}\frac{1}{\delta}H(z,|F|)\,dz.
\end{split}
\end{align}
Denoting $\La_1=\left(\tfrac{32V\rho}{r_2-r_1}\right)^\frac{q(n+2)}{2}\La_0,$
we integrate \eqref{465} over $(\La_1,\infty)$ with respect to $d\La$ to have
\begin{align}\label{467}
    \begin{split}
        &\mathrm{I}
        =\int_{\La_1}^\infty \La^{\sigma-2}\iint_{\Psi_k(S\La,r_1)}H(z,|\na u|)\,dz\,d\La\\
        &\qquad\le 2^{q+2}\epsilon\int_{\La_1}^\infty \La^{\sigma-2}\iint_{\Psi_k(\eta\La,r_2)}H(z,|\na u|)\,dz\,d\La\\
        &\qquad\qquad +2^{q+2}\int_{\La_1}^\infty \La^{\sigma-2}\iint_{\Phi(\eta\delta\La,r_2)}\delta^{-1}H(z,|F|)\,dz\,d\La=\mathrm{II}+\mathrm{III}.
    \end{split}
\end{align}
To estimate $\mathrm{I}$, we apply the Fubini theorem. There holds
\begin{align*}
    \begin{split}
        &\mathrm{I}
        =\iint_{\Psi_k(S\La_1,r_1)}H(z,|\na u|)\int_{S\La_1}^{H(z,|\na u|)_k}\La^{\sigma-2}\,d\La\,dz\\
        &\qquad=\frac{1}{\sigma-1}\iint_{\Psi_k(S\La_1,r_1)}H(z,|\na u|)(H(z,|\na u|)_k)^{\sigma-1}\,dz\\
        &\qquad\qquad-\frac{1}{\sigma-1}(S\La_1)^{\sigma-1}\iint_{\Psi_k(S\La_1,r_1)}H(z,|\na u|)\,dz.
    \end{split}
\end{align*}
Also since the following estimate holds
\begin{align*}
\begin{split}
    &\iint_{Q_{r_1}(z_0)\setminus \Psi_k(S\La_1,r_1)}H(z,|\na u|)(H(z,|\na u|)_k)^{\sigma-1}\,dz\\
    &\qquad\le (S\Lambda_1)^{\sigma-1}\iint_{Q_{r_2}(z_0)}H(z,|\na u|)\,dz,
\end{split}
\end{align*}
we get
\begin{align*}
\begin{split}
    &\mathrm{I}
        \geq\frac{1}{\sigma-1}\iint_{Q_{r_1}(z_0)}H(z,|\na u|)(H(z,|\na u|)_k)^{\sigma-1}\,dz\\
        &\qquad-\frac{2}{\sigma-1}(S\La_1)^{\sigma-1}\iint_{Q_{2\rho}(z_0)}H(z,|\na u|)\,dz.
\end{split}
\end{align*}
Similarly, we obtain
\begin{align*}
    \begin{split}
        &\mathrm{II}
        \le 2^{q+2}\epsilon\frac{1}{\sigma-1}\iint_{\Psi_k(\La_1,r_2)}H(z,|\na u|)(H(z,|\na u|)_k)^{\sigma-1}\,dz\\
        &\qquad\le 2^{q+2}\epsilon\frac{1}{\sigma-1}\iint_{Q_{r_2}(z_0)}H(z,|\na u|)(H(z,|\na u|)_k)^{\sigma-1}\,dz
        \end{split}
\end{align*}
and
\[\mathrm{III}\le 2^{q+2}\frac{\delta^{-1}}{\sigma-1}\iint_{Q_{2\rho}(z_0)}(H(z,|F|))^{\sigma}\,dz.\]
We have estimated \eqref{467} to be
\begin{align*}
    \begin{split}
        &\iint_{Q_{r_1}(z_0)}H(z,|\na u|)(H(z,|\na u|)_k)^{\sigma-1}\,dz\\
        &\qquad\le 2^{q+2}\epsilon\iint_{Q_{r_2}(z_0)}H(z,|\na u|)(H(z,|\na u|)_k)^{\sigma-1}\,dz\\
        &\qquad\qquad+2(S\La_1)^{\sigma-1}\iint_{Q_{2\rho}(z_0)}H(z,|\na u|)\,dz+2^{q+2}\delta^{-1}\iint_{Q_{2\rho}(z_0)}(H(z,|F|))^{\sigma}\,dz.
    \end{split}
\end{align*}
We take $\epsilon=\tfrac{1}{2^{q+3}}$. Then $\delta$ and $K$ are also fixed and thus $c(\data_\delta)=c(\data)$ and $S=S(\data_\delta)=S(\data)$. Consequently, $\rho_0=\rho_0(\data,\| H(z,|F|) \|_{1+\varepsilon_0},\|a\|_{\infty})\in(0,1)$ is fixed as well.
Recalling $\La_1=\left(\tfrac{32V\rho}{r_2-r_1}\right)^\frac{q(n+2)}{2}\La_0$, it follows
\begin{align}\label{473}
    \begin{split}
        &\iint_{Q_{r_1}(z_0)}H(z,|\na u|)(H(z,|\na u|)_k)^{\sigma-1}\,dz\\
        &\qquad\le \frac{1}{2}\iint_{Q_{r_2}(z_0)}H(z,|\na u|)(H(z,|\na u|)_k)^{\sigma-1}\,dz\\
        &\qquad\qquad+c\left(\tfrac{2\rho}{r_2-r_1}\right)^{\beta}\La_0^{\sigma-1}\iint_{Q_{2\rho}(z_0)}H(z,|\na u|)\,dz+c\iint_{Q_{2\rho}(z_0)}(H(z,|F|))^{\sigma}\,dz,
    \end{split}
\end{align}
where $c=c(\data)$ and $\beta=\tfrac{q(n+2)(\sigma-1)}{2}$.
Using Lemma~\ref{lem21} and then letting $k\longrightarrow\infty$, we have
\begin{align*}
\begin{split}
    &\iint_{Q_{\rho}(z_0)}(H(z,|\na u|))^{\sigma}\,dz
    \le c\La_0^{\sigma-1}\iint_{Q_{2\rho}(z_0)}H(z,|\na u|)\,dz\\
    &\qquad\qquad+c\iint_{Q_{2\rho}(z_0)}(H(z,|F|))^{\sigma}\,dz,
\end{split}
\end{align*}
where $c=c(\data,\sigma)$.
Finally, the following estimate holds from the choice of $\La_0$.
\begin{align*}
\begin{split}
    &\fiint_{Q_{2\rho}(z_0)}(H(z,|\na u|))^{\sigma}\,dz
        \le c\left(\fiint_{Q_{2\rho}(z_0)}H(z,|\na u|)\,dz\right)^{\frac{q(\sigma-1)}{2}+1}\\
        &\qquad\qquad+c\left(\fiint_{Q_{2\rho}(z_0)}(H(z,|F|))^{\sigma}\,dz\right)^\frac{q}{2},
\end{split}
\end{align*}
where $c=c(\data,\|a\|_{\infty},\sigma)$. The proof is completed.

\section{The proof of Theorem~\ref{main theorem2}}
In the proof of Theorem~\ref{main theorem}, the construction of the weak solutions of homogeneous Dirichlet boundary value problems and their regularity properties in the spatial direction are necessary. We avoid this difficulty by extending the solution $u$ and data $b(\cdot)$, $a(\cdot)$, $F$ in system \eqref{11} and moreover the system itself to $ C_{3R}$ so that only local estimate in Theorem~\ref{main theorem} is used to prove Theorem~\ref{main theorem2}. This section is divided into two steps. In the first step, we extend \eqref{11} to the spatial direction. In the second step, we extend the first step system to the time direction.

\subsection{Extension along the lateral boundary}
We observe that topological boundary of $ C_R=D_R\times I_R$ in the spatial direction consists of hyper planes $\{x\in \RR^{n+1}:  x_i =\pm R\}$ for $1\le i\le n$. Since the argument is analogous, we only consider when $x_n=-R$. We denote $x'=(x_1,...,x_{n-1})\in \RR^{n-1}$ for $x=(x_1,...,x_{n-1},x_n)\in \RR^n$. For each $(x',x_n,t)\in  C_R$, we define the reflection map $\mathcal{R}$ along the hyperplane $\{\RR^{n+1}: x_n=-R\}$ as $\mathcal{R}(x',x_n,t)=(x',-2R-x_n,t).$
It is easy to see $\mathcal{R}^{-1}\equiv \mathcal{R}$.
For $\varphi\in C_0^\infty( C_R\cup\mathcal{R}( C_R),\RR^N)$, we define $\phi$ to be $\phi(z)=\varphi(z)-\varphi\circ\mathcal{R}(z)$ for all $(x',x_n,t)\in  C_R$.
Since $\phi\equiv0$ on $\pa C_R$, it follows from the trace theorem that $\phi\in W_0^{1,\infty}( C_R,\RR^N)$ and $\phi$ is an admissible test function to \eqref{11}. We have
\begin{align}\label{53}
    \begin{split}
        &0=\iint_{ C_R}\left(-u\cdot\phi_t+b\mA(z,\na u)\cdot \na \phi-\mA(z,F)\cdot\na \phi\right)\,dz\\
         &\qquad=\iint_{ C_R}\left(-u\cdot\varphi_t+b\mA(z,\na u)\cdot\na\varphi-\mA(z,F)\cdot \na\varphi\right)\,dz\\
         &\qquad\qquad+\iint_{ C_R} \left(u\cdot (\varphi\circ\mathcal{R})_t-b\mA(z,\na u)\cdot \na (\varphi\circ\mathcal{R})+\mA(z,F)\cdot\na(\varphi\circ\mathcal{R})\right)\,dz.
    \end{split}
\end{align}
We will apply the change of variables to $x_n$ in order to replace the referenced domain $ C_R$ by $\mathcal{R}( C_R)$. Firstly, note that the determinant of the Jacobian matrix $\mathcal{J}$ of $\mathcal{R}$ is $-1$. There holds
\begin{align*}
    \begin{split}
        &\iint_{ C_R}u\cdot (\varphi\circ\mathcal{R})_t\,dz=\iint_{ C_R}u\cdot(\varphi_t\circ\mathcal{R})\,dz\\
        &\qquad=\iint_{\mathcal{R}( C_R)} (u\circ\mathcal{R}^{-1})\cdot\varphi_t|\det\mathcal{J}|\,dz
        =\iint_{\mathcal{R}( C_R)}-(-u\circ\mathcal{R}^{-1})\cdot\varphi_t\,dz.
    \end{split}
\end{align*}
Secondly, we calculate the $p$-Laplace operator term to estimate the term involving $\mathcal{A}(z,\na u)$.
\begin{align*}
    \begin{split}
       &\iint_{ C_R} -b|\na u|^{p-2}\na u\cdot \na(\varphi\circ \mathcal{R})\,dz\\
       &\qquad=\sum_{1\le i\le n-1}\iint_{ C_R}-b|\na u|^{p-2}\pa_iu\cdot(\pa_i\varphi   \circ \mathcal{R})\,dz\\
       &\qquad\qquad+\iint_{ C_R}b|\na u|^{p-2}\pa_nu\cdot(\pa_n\varphi\circ\mathcal{R})\,dz\\
       &\qquad=\iint_{\mathcal{R}( C_R)}b\circ \mathcal{R}^{-1}|\na (-u\circ \mathcal{R}^{-1})|^{p-2}\na (-u\circ \mathcal{R}^{-1})\cdot \na\varphi\,dz.
       \end{split}
\end{align*}
The same argument holds when we replace $b$ and $p$ with $ba$ and $q$. Therefore, we deduce
\[\iint_{ C_R}-b\mA(z,\na u)\cdot \na (\varphi\circ\mathcal{R})\,dz
    =\iint_{\mathcal{R}( C_R)}b_{\mathcal{R}}\mA_{\mathcal{R}}(z,\na u_{\mathcal{R}})\cdot \na \varphi\,dz,\]
where
\begin{align*}
\begin{split}
   & u_{\mathcal{R}}=-u\circ \mathcal{R}^{-1},\\
   & b_{\mathcal{R}}=b\circ\mathcal{R}^{-1},\\
   & a_{\mathcal{R}}=a\circ \mathcal{R}^{-1},\\
   & \mA_{\mathcal{R}}(z,\xi)=|\xi|^{p-2}\xi+a_{\mathcal{R}}|\xi|^{q-2}\xi.
\end{split}
\end{align*}
Similarly, we also have
\[\iint_{ C_R}\mA(z,F)\cdot\na(\varphi\circ\mathcal{R}^{-1})\,dz
        =\iint_{\mathcal{R}( C_R)}\mA_{\mathcal{R}}(z,F_{\mathcal{R}})\cdot\na \varphi\,dz,\]
where $F_{\mathcal{R}}=(F_1\circ\mathcal{R}^{-1},...,F_{n-1}\circ\mathcal{R}^{-1},-F_n\circ\mathcal{R}^{-1}).$
Therefore, \eqref{53} becomes
\begin{align*}
    \begin{split}
        &0=\iint_{ C_R}\left(-u\cdot\varphi_t+b\mA(z,\na u)\cdot\na\varphi-\mA(z,F)\cdot \na\varphi\right)\,dz\\
        &\qquad+\iint_{\mathcal{R}( C_R)}\left(-u_{\mathcal{R}}\cdot\varphi_t+b_{\mathcal{R}}\mA_{\mathcal{R}}(z,\na u_{\mathcal{R}})\cdot\na\varphi-\mA_{\mathcal{R}}(z,F_{\mathcal{R}})\cdot \na\varphi\right)\,dz.
    \end{split}
\end{align*}
Extending $u$, $b$, $a$ and $F$ to $u_{\mathcal{R}}$, $b_{\mathcal{R}}$, $a_{\mathcal{R}}$ and $F_{\mathcal{R}}$ in $\mathcal{R}( C_R)$ as above displays, $u$ is a weak solution to
\[\begin{cases}
    u_t-\dv(b\mA(z,\na u))=-\dv\mA(z,F)&\text{in}\quad C_R\cup\mathcal{R}( C_R),\\
    u=0&\text{on}\quad\pa_p( C_R\cup\mathcal{R}( C_R)).
\end{cases}\]
Since $u\equiv0$ on $\pa_p C_R$, it is easy to see
\begin{align*}
\begin{split}
    &u\in C(I_{R};L^2( C_R\cup\mathcal{R}( C_R),\RR^N))\cap L^1(I_R;W_{0}^{1,1}( C_R\cup\mathcal{R}( C_R),\RR^N))\quad\text{with}\\
    &\qquad\qquad\iint_{ C_R\cup \mathcal{R}( C_R)}H(z,|\na u|)\,dz<\infty.
\end{split}
\end{align*}
Also, note that extended $b$ and $F$ satisfy the ellipticity condition \eqref{12} in $ C_R\cup\mathcal{R}( C_R)$ and $H(z,|F|)\in L^1( C_R\cup\mathcal{R}( C_R))$. It also follows that $a\in C^{\alpha,\alpha/2}( C_R\cup\mathcal{R}( C_R))$ with $[a]_{\alpha; C_R\cup\mathcal{R}( C_R)}=[a]_{\alpha; C_R}.$ Thus \eqref{15} holds in $ C_R\cup\mathcal{R}( C_R)$. Indeed for each $z=(x',x_n,t)\in  C_R$ and $w=(y',y_n,s)\in \mathcal{R}( C_R)$, there holds
\begin{align}\label{reflex}
\begin{split}
    &|z-\mathcal{R}^{-1}(w)|
    =|(x',x_n,t)-(y',-y_n-2R,s)|=|\mathcal{R}(z)-w|\\
    &\qquad=|(x',x_n+R,t)-(y',-(y_n+R),s)|\\
    &\qquad\qquad\le |(x',x_n,t)-(y',y_n,s)|=|z-w|
\end{split}
\end{align}
since the reflection makes the distance between points in $n$-variable shorter.
We now verify that the VMO condition \eqref{18} of $b$ in $ C_R$ implies the local VMO condition \eqref{115} in $ C_R\cup\mathcal{R}( C_R)$
\begin{align}\label{512}
    \lim_{r\to0^+}\sup_{\tau\le r^2}\sup\limits_{\substack{B_{r}(x_0)\times I_\tau(t_0)\\\subset  C_R\cup\mathcal{R}( C_R)}}
    \fiint_{ B_r(x_0)\times I_\tau(t_0)}|b(x,t)-b_{ B_{r}(x_0)\times I_\tau(t_0)}|\,dx\,dt=0.
\end{align}
Since $b$ is extended by even reflection, we may assume $z_0=(x_0',x_{0,n},t_0)\in  C_R$. For each $z=(x',x_n,t)\in (B_r(x_0)\times I_\tau(t_0)) \cap\mathcal{R}( C_R)$, again \eqref{reflex} leads $|(x_0',x_{0,n},t_0)-(x',-2R-x_n,t)|\le |(x_0',x_{0,n},t_0)-(x',x_n,t)|$. Therefore we have $\mathcal{R}(z)=(x',-2R-x_n,t)\in (B_r(x_0)\times I_\tau(t_0))\cap  C_R$ and
\begin{align*}
    \begin{split}
        &\fiint_{B_{r}(x_0)\times I_{\tau}(t_0)} |b(z)-b_{B_{r}(x_0)\times I_\tau(t_0)}|\,dz\\
        &\qquad\le 2\fiint_{B_{r}(x_0)\times I_{\tau}(t_0)} |b(z)-b_{(B_{r}(x_0)\times I_\tau(t_0))\cap  C_R}|\,dz\\
        &\qquad\qquad\le 4\fiint_{( B_{r}(x_0)\times I_{\tau}(t_0))\cap  C_{R}} |b(z)-b_{(B_{r}(x_0)\times I_\tau(t_0))\cap  C_R}|\,dz.
    \end{split}
\end{align*}
The last term goes to $0$ as $r$ approaches $0$ from \eqref{18}. Hence \eqref{512} holds true.

Inductively repeating extension arguments from the previous steps, the extended map $u$ is the weak solution to
\begin{align}\label{514}
    \begin{cases}
        u_t-\dv(b\mA(z,\na u))=-\dv\mA(z,F)&\text{in}\quad D_{3R}\times I_R,\\
        u=0&\text{on}\quad\pa_p(D_{3R}\times I_R),
    \end{cases}
\end{align}
where \eqref{12}, \eqref{14}, \eqref{15} and \eqref{16} holds by replacing the reference domain $ C_R$ with $D_{3R}\times I_R$, and  and \eqref{115} holds in $D_{3R}\times I_R$ whenever the center point $z_0$ belongs to $  C_R$.

\subsection{Extension along the time direction} In this subsection, we extend \eqref{514} to $D_{3R}\times (-9R^2,9R^2)$. 

\subsubsection{Initial boundary} We extend $u$, $F$ to be zero while, extend $b$ and $a$ evenly below the initial boundary $D_{3R}\times \{t=-R^2\}$. Again it is easy to see that
\begin{align*}
\begin{split}
    &u\in C((-9R^2,R^2);L^2(D_{3R},\RR^N))\cap L^1((-9R^2,R^2);W_0^{1,1}(D_{3R},\RR^N))\quad\text{with}\\
    &\qquad\qquad\iint_{D_{3R}\times (-9R^2,R^2)}H(z,|\na u|)\,dz<\infty.
\end{split}    
\end{align*}
Also, $b$ satisfies \eqref{12} and $a\in C^{\alpha,\alpha/2}$ with $[a]_{\alpha;D_{3R}\times (-9R^2,R^2)}=[a]_{\alpha;D_{3R}\times I_R}$. The VMO condition of $b$ again is satisfied as well. We omit the detailed proof since the argument is repeated from the previous subsection.

We verify \eqref{514} is extended to $D_{3R}\times (-9R^2,R^2)$. Let $\vartheta>0$ and $\zeta_{\vartheta}\in W^{1,\infty}(\RR)$ be a Lipschitz function defined as
\[\zeta_{\vartheta}(t)=
    \begin{cases}
        0&\text{if}\quad t\in (-\infty,-R^2)\\
        \frac{1}{\vartheta}(t+R^2)&\text{if}\quad t\in [-R^2,-R^2+\vartheta],\\
        1&\text{if}\quad t\in (-R^2+\vartheta,\infty).
    \end{cases}\]
For $\varphi\in C_0^\infty(D_R\times (-9R^2,R^2),\RR^N)$ there holds
\begin{align*}
    \begin{split}
        &\iint_{D_{3R}\times (-9R^2,R^2)}-u\cdot\varphi_t\,dz=\lim_{\vartheta\to0^+}\iint_{D_{3R}\times (-R^2,R^2)}-u\cdot\varphi_t\zeta_\vartheta\,dz\\
        &\qquad=\lim_{\vartheta\to0^+}\iint_{D_{3R}\times (-R^2,R^2)}-u\cdot(\varphi\zeta_\vartheta)_t\,dz+\lim_{\vartheta\to0^+}\iint_{D_{3R}\times (-R^2,R^2)}u\cdot\varphi\pa_t\zeta_\vartheta\,dz.
    \end{split}
\end{align*}
We observe from \eqref{111} that
\begin{align*}
    \begin{split}
        &\lim_{\vartheta\to0^+}\biggl|\iint_{D_{3R}\times (-R^2,R^2)}u\cdot\varphi\pa_t\zeta_\vartheta\,dz\biggr|
        =\lim_{\vartheta\to0^+}\fint_{-R^2}^{-R^2+\vartheta}\int_{D_R}|u\cdot\varphi|\,dz\\
        &\qquad\le \lim_{\vartheta\to0^+}\left(\fint_{-R^2}^{-R^2+\vartheta}\int_{D_R}|u|^2\,dz\right)^\frac{1}{2}\left(\fint_{-R^2}^{-R^2+\vartheta}\int_{D_R}|\varphi|^2\,dz\right)^\frac{1}{2}\\
        &\qquad\qquad= 0\cdot\left(\int_{D_R}|\varphi(x,-R^2)|^2\,dz\right)^\frac{1}{2}=0.
    \end{split}
\end{align*}
Thus, \eqref{11} gives
\begin{align*}
    \begin{split}
        &\iint_{D_{3R}\times (-9R^2,R^2)}-u\cdot\varphi_t\,dz\\
        &\qquad=\lim_{\vartheta\to0^+}\iint_{D_{3R}\times (-R^2,R^2)}\left(-b(z)\mA(z,\na u)\cdot \na\varphi\zeta_\vartheta+\mA(z,F)\cdot \na\varphi\zeta_\vartheta\right)\,dz\\
        &\qquad\qquad=\iint_{D_{3R}\times (-R^2,R^2)}\left(-b(z)\mA(z,\na u)\cdot \na\varphi+\mA(z,F)\cdot \na\varphi\right)\,dz\\
        &\qquad\qquad\qquad=\iint_{D_{3R}\times (-9R^2,R^2)}\left(b(z)\mA(z,\na u)\cdot \na\varphi+\mA(z,F)\cdot \na\varphi\right)\,dz.
    \end{split}
\end{align*}
It follows that a trivial extension of $u$ is a weak solution to
\begin{align}\label{519}
\begin{cases}
    u_t-\dv (b(z)\mA(z,\na u))=-\dv\mA(z,F)&\text{in}\quad D_{3R}\times (-9R^2,R^2),\\
    u=0&\text{on}\quad \pa_p(D_{3R}\times (-9R^2,R^2)).
\end{cases}
\end{align}

\subsubsection{Topological boundary} In this case, we extend $b$ and $a$ evenly along $\{t=R^2\}$ in $D_{3R}\times (R^2,9R^2)$ whereas we extend $F$ to be zero on $D_{3R}\times (R^2,\infty)$.
Again the ellipticity condition and the VMO condition of $b$, H\"older's continuity of $a$ hold. Since we consider the case $\inf a>0$ in $D_{3R}\times (-9R^2,9R^2)$, there exists a weak solution $w$ to
\[\begin{cases}
        w_t-\dv(b(z)\mathcal{A}(z,\na w))=-\dv \mathcal{A}(z,F) &\text{in}\quad D_{3R}\times (-9R^2,9R^2),\\
        w=0&\text{in}\quad \pa_p(D_{3R}\times (-9R^2,9R^2)).
    \end{cases}\]
The above system is equivalent to \eqref{519} in $D_{3R}\times (-9R^2,R^2)$ with the same boundary data on $\pa_p(D_{3R}\times (-9R^2,R^2))$. The uniqueness theorem for the parabolic $q$-Laplace system says $w\equiv u$ in $D_{3R}\times (-9R^2,R^2)$ and $w$ is a extension of $u$. Therefore $u$ is a weak solution to
\begin{align}\label{523}
    u_t-\dv(b(z)\mA(z,\na u))=-\dv\mA(z,F)\quad \text{in}\quad C_{3R}.
\end{align}
We are ready to prove Theorem~\ref{main theorem2}. We are enough to consider when $\sigma\in(1+\varepsilon_0,\infty)$. Recall $\data$ depends on
\[n,N,p,q,\alpha,\nu,L,[a]_{\alpha},R,\|u\|_{L^{\infty}(I_{3R};L^2(D_{3R}))},
    \|H(z,|\na u|)\|_{L^1( C_{3R})},\|H(z,|F|)\|_{L^1( C_{3R})}.\]
On the other hand, we deduce from the energy estimate that
\[\sup_{t\in (-9R^2,9R^2)}\int_{D_{3R}}|u|^2\,dx+\iint_{ C_{3R}}H(z,|\na u|)\,dz\le \iint_{ C_{3R}}H(z,|F|)\,dz\]
and from the trivial extension that
\[\iint_{ C_{3R}}(H(z,|F|))^{\kappa}\,dz\le 3^{n}\iint_{ C_{R}}(H(z,|F|))^{\kappa}\,dz\]
for all $\kappa\in(1,\infty)$.
This leads $\data=\data_g$ and $\varepsilon_0=\varepsilon_0(\data_g)$. Using the estimate in Theorem~\ref{main theorem} to \eqref{523} in $ C_{3R}$ and covering argument and energy estimates, there exists  $c=c(\data_g,\|a\|_{\infty},\sigma,\|H(z,|F|)\|_{1+\varepsilon_0})$ such that
\[\fiint_{ C_{R}}(H(z,|\na u|))^\sigma
    \le c\left(\fiint_{ C_{R}}(H(z,|F|))^\sigma\,dz+1\right)^{\frac{q}{2}}.\]
This completes the proof.

\section{Proof of Corollary~\ref{main theorem3}}
We apply the uniqueness and existence result in \cite[Theroem 2.6 and Theorem 2.7]{KKS}. There exists a sequence of weak solutions $\{u_l\}_{l\in \mathbb{N}}\subset L^q(I_R;W_0^{1,q}(D_R,\RR^N))$ to the Dirichlet boundary problem
\[\begin{cases}
        \pa_tu_l-\dv (b(z)\mA_l(z,\na u_l))=-\dv \mA_l(z,F_l)&\text{in}\quad C_R\\
        u_l=0&\text{on}\quad\pa_p C_R,
    \end{cases}\]
 such that
\begin{align}\label{62}
    \lim_{l\to\infty}\iint_{ C_R}H(z,|\na u-\na u_l|)\,dz=0,
\end{align}
where $\{F_l\}_{l\in \mathbb{N}}\subset L^\infty(Q_R,\RR^{Nn})$ is a sequence of truncated functions of $F$ satisfying
\[\lim_{l\to\infty}\iint_{ C_R}H(z,|F-F_l|)\,dz=0\]
and $\mA_l$ is a perturbed $q$-Laplace operator with the positive decreasing sequence $\{\ep_l\}_{l\in\mathbb{N}}$ such that
\begin{align}\label{64}
    \mA_l(z,\xi)=|\xi|^{p-2}\xi+a_l(z)|\xi|^{q-2}\xi,\quad a_l(z)=a(z)+\ep_l,\quad \lim_{l\to\infty}\ep_l=0.
\end{align}
Furthermore, it follows from the proof of \cite[Theorem 2.5]{KKS} that for $H_l(z,s)=s^p+a_l(z)s^q$
\begin{align}\label{65}
    H_l(z,|F_l(z)|)\le 2H(z,|F(z)|)\quad\text{for all}\quad z\in  C_R.
\end{align}
We only consider when $\sigma\in(1+\varepsilon_0,\infty)$. For each $l$, the estimate in Theorem~\ref{main theorem2} gives
\[\fiint_{ C_{R}}(H_l(z,|\na u_l|))^\sigma\,dz\le c\left(\fiint_{ C_R}(H_l(z,|F_l|))^\sigma\,dz+1\right)^\frac{q}{2},\]
where $c=c(\data_g,R,\|a_l\|_{\infty},\sigma,\|H(z,|F_l|)\|_{1+\varepsilon_0})$. Now applying \eqref{64} and \eqref{65}, we get
\[\fiint_{ C_{R}}(H(z,|\na u_l|))^\sigma\,dz\le c\left(\fiint_{ C_R}(H(z,|F|))^\sigma\,dz+1\right)^\frac{q}{2}\]
with $c=c(\data_g,\|a\|_{\infty},\sigma,\|H(z,|F|)\|_{1+\varepsilon_0})$. This implies $H(z,|\pa_i u_l^j|)$ is uniformly bounded in $L^\sigma( C_R)$ for each $1\le i\le n$ and $1\le j\le N$. Thus, there exists $0\le v_i^j(z)\in L^\sigma( C_R)$ such that $H(z,|\pa_i u_l^j(z)|)$ weakly converges to $v_i^j$ in $L^\sigma( C_R)$. Since $H(z,s)$ is convex increasing function for each $z\in C_R$ with $H(z,0)=0$, we are able to find $0\le w_i^j(z)$ such that $v_i^j(z)=H(z,w_i^j(z))$.
Meanwhile, $H(z,|\pa_iu_l^j(z)|)$ converges point-wisely to $H(z,|\pa_i u^j(z)|)$ on account of \eqref{62}. Consequently, $H(z,w_i^j(z))\equiv H(z,|\pa_iu^j(z)|)$ holds from \cite[Chapter V, Proposition 9.1c]{MR1897317}. Hence $w_i^j\equiv \pa_iu^j$ holds and we obtain
\begin{align*}
\begin{split}
    &\fiint_{ C_{R}}(H(z,|\na u|))^\sigma\,dz
    \le \liminf_{l\to\infty}\fiint_{ C_{R}}(H(z,|\na u_l|))^\sigma\,dz\\
    &\qquad\le c\left(\fiint_{ C_R}(H(z,|F|))^\sigma\,dz+1\right)^\frac{q}{2}.
\end{split}
\end{align*}
This completes the proof.


\end{document}